\newcommand\g{{\mathfrak g}}
\newcommand{\f}{\mathfrak{f}}
\newcommand\m{\mathfrak m}
\newcommand\n{\mathfrak n}
\newcommand\z{\mathfrak z}
\newcommand\vf{\mathfrak v}
\newcommand\p{\mathfrak p}
\newcommand\y{\mathfrak y}
\newcommand\skewperp{\angle}
\newcommand\codim{\operatorname{codim}}
\newcommand\Spec{\operatorname{Spec}}
\newcommand\Quot{\operatorname{Quot}}
\newcommand\F{\operatorname{F}}
\newcommand\W{{\bf A}}
\newcommand\K{\mathbb K}
\newcommand\U{\mathcal U}
\renewcommand\Pr{\operatorname{Pr}}
\newcommand\D{\mathcal D}
\newcommand\Ann{\operatorname{Ann}}
\newcommand\Id{\mathfrak{Id}}
\newcommand\so{\mathfrak{so}}
\renewcommand\sp{\mathfrak{sp}}
\newcommand\Dim{\operatorname{Dim}}
\newcommand\Walg{\mathcal W}
\newcommand\Z{\mathbb Z}
\newcommand\A{\mathcal A}
\newcommand\N{\mathbb N}
\newcommand\gr{\operatorname{gr}}
\newcommand\I{\mathcal I}
\newcommand\J{\mathcal J}
\renewcommand\sl{\mathfrak{sl}}
\newcommand\Sp{\mathop{\rm Sp}\nolimits}
\newcommand\Hom{\operatorname{Hom}}
\newcommand{\ad}{\mathop{\rm ad}\nolimits}
\newcommand\Centr{\mathcal Z}
\newcommand\Goldie{\operatorname{Grk}}
\newcommand\mult{\operatorname{mult}}
\newcommand{\VA}{\operatorname{V}}
\newtheorem{Thm}{Theorem}[subsection]
\newtheorem{Prop}[Thm]{Proposition}
\newtheorem{Cor}[Thm]{Corollary}
\newtheorem{Lem}[Thm]{Lemma}
\theoremstyle{definition}
\newtheorem{Ex}[Thm]{Example}
\newtheorem{defi}[Thm]{Definition}
\newtheorem{Rem}[Thm]{Remark}
\newtheorem{Conj}[Thm]{Conjecture}
\numberwithin{equation}{section}
\numberwithin{table}{section} \oddsidemargin=0cm
\author{Ivan  Losev}
\title{Quantized symplectic actions and $W$-algebras}
\thanks{{\it Key words and phrases}: $W$-algebras, nilpotent elements, universal
enveloping algebras, deformation quantization,  prime ideals, finite
dimensional representations}
\thanks{{\it 2000 Mathematics Subject Classification.} 17B35, 53D55}
\begin{document}
\begin{abstract}
With a nilpotent element in a semisimple Lie algebra $\g$ one
associates a finitely generated associative algebra $\Walg$ called
{\it a $W$-algebra of finite type}. This algebra is obtained from
the universal enveloping algebra $U(\g)$ by a certain Hamiltonian
reduction. We observe that  $\Walg$ is the invariant algebra for an
action of a reductive group $G$ with Lie algebra $\g$ on a quantized
symplectic affine variety and use this observation to study $\Walg$.
Our results include an alternative definition of $\Walg$, a relation
between the sets of prime ideals of $\Walg$ and of the corresponding
universal enveloping algebra, the existence of a one-dimensional
representation of $\Walg$ in the case of classical $\g$ and the
separation of elements of $\Walg$ by finite dimensional
representations.
\end{abstract}
\maketitle \tableofcontents
\section{Introduction}\label{SECTION_intro}
Throughout this paper $\K$ denotes the base field. It is assumed to
be algebraically closed and of characteristic $0$.
\subsection{$W$-algebras}\label{SUBSECTION_Walg}
Let $\g$ be a finite dimensional semisimple Lie algebra over $\K$
and $G$ the semisimple algebraic group of adjoint type with Lie
algebra $\g$. Fix a nonzero nilpotent element $e\in\g$.

We identify $\g$ with $\g^*$ using an invariant nondegenerate
symmetric form on $\g$ (say, the Killing form). Let $\chi$ be the
element of $\g^*$ corresponding to $e$.

We fix an $\sl_2$-triple $(e,h,f)$. Also we fix an element $h'\in\g$ satisfying the following two conditions:
\begin{itemize}
\item $[h',e]=2e, [h',h]=0$ (and then, automatically, $[h',f]=-2f$).
\item Eigenvalues of $\ad(h')$ on $\z_\g(e)$ are nonnegative integers.
\end{itemize}
For instance, one can take $h$ for $h'$.

 Consider the grading $\g=\bigoplus_{i\in\Z}\g(i)$ associated with $h'$,
that is, $\g(i):=\{\xi\in\g| [h',\xi]=i\xi\}$. Define the $2$-form
$\omega_\chi$ on $\g(-1)$ by
$\omega_\chi(\xi,\eta)=\langle\chi,[\xi,\eta]\rangle$. The kernel of
$\omega_\chi$ lies in $\z_\g(e)$ hence is zero. Consider an
isotropic subspace $\y\subset \g(-1)$ and let $\y^\skewperp$ denote
the skew-orthogonal complement of $\y$. Define two subspaces of
$\g$: $\m_\y:=\bigoplus_{i\leqslant -2}\g(i)\oplus\y,
\n_\y:=\bigoplus_{i\leqslant -2}\g(i)\oplus\y^\skewperp$.

Clearly, $\m_\y,\n_\y$ are unipotent subalgebras  of $\g$ and
$\m_\y\subset \n_\y$. Let $N_\y$ be the connected subgroup of $G$
with Lie algebra $\n_\y$.
 It is seen directly that $\chi|_{\m_\y}$
is  $N_\y$-invariant.

To simplify the notation below we write $\m,\n,N$ instead of
$\m_\y,\n_\y,N_\y$. Let us set
$\m'(=\m'_\y):=\{\xi-\langle\chi,\xi\rangle,\xi\in\m\}$.

\begin{defi}\label{defi:3.0}
By the $W$-algebra (of finite type) associated with $e$ we mean the
algebra $U(\g,e):=\left(U(\g)/U(\g)\m' \right)^{N}$, whose
multiplication is induced from $U(\g)$.
\end{defi}

Let us introduce a certain filtration on $U(\g)$. Firstly, there is
the standard (PBW) filtration $\F^{st}_i U(\g)$. Set
$U(\g)(i):=\{f\in U(\g)| [h', f]=if\},\F_k U(\g)=\sum_{i+2j\leqslant
k} (\F^{st}_j U(\g)\cap U(\g)(i))$. Let us equip $U(\g,e)$ with the
induced filtration. It is referred to as the {\it Kazhdan}
filtration. It is known (Theorem \ref{Thm:0.1.0} below) that
$U(\g,e)$ does not depend on the choice of $\y$ up to an isomorphism
of filtered algebras. Moreover, Brundan and Goodwin proved in
\cite{BG}  that $U(\g,e)$ does not depend on the choice of $h'$ up
to an isomorphism of algebras. The latter result also follows from
Proposition \ref{Cor:3.5} and Corollary \ref{Cor:3.21} of the
present paper.

The definition of $U(\g,e)$ is due to Premet, \cite{Premet1}, (for
lagrangian $\y$) and Gan-Ginzburg, \cite{GG}, (for general $\y$).
One of the main results of \cite{Premet1},\cite{GG} is the description of
the associated graded algebra of $U(\g,e)$ with respect to the
Kazhdan filtration.

To state this description  let us recall the definition of the {\it Slodowy slice}, \cite{Slodowy}.
By definition, this is the affine subspace $S:=\chi+(\g/[\g,f])^*\subset\g^*$. Using the identification $\g\cong\g^*$, we see that
$\z_\g(e)$ is the dual space of $(\g/[\g,f])^*$. So we can identify $\K[S]$ with the symmetric algebra $S(\z_\g(e))$.
There is a unique grading on $\K[S]$ such that an element
$\xi\in\z_\g(e)\cap \g(i)$ has degree $i+2$.

\begin{Thm}[\cite{GG}, Theorem 4.1]\label{Thm:0.1.0}
The filtered algebra $U(\g,e)$ does not depend on the choice of $\y$ (up to a distinguished isomorphism)
and $\gr U(\g,e)\cong \K[S]$ as graded algebras.
\end{Thm}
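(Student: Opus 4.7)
The plan is to compute the associated graded of $U(\g,e)$ with respect to the Kazhdan filtration in three stages, then deduce independence of the choice of $\y$ by comparing two different Hamiltonian reductions. The first stage is to identify $\gr(U(\g)/U(\g)\m')$. For $\xi\in\m\cap\g(i)$ (so $i\leq-2$) or $\xi\in\y$ (so $i=-1$), the element $\xi-\langle\chi,\xi\rangle$ sits in Kazhdan filtration degree $i+2$, and its principal symbol is $\xi$ when $i\neq-2$ (where $\langle\chi,\xi\rangle=0$) and $\xi-\langle\chi,\xi\rangle$ when $i=-2$ (where both summands live in Kazhdan degree $0$). Hence $\gr(U(\g)\m')$ is the ideal in $S(\g)=\K[\g^*]$ cutting out the affine subspace $\chi+\m^\perp$, and $\gr(U(\g)/U(\g)\m')\cong \K[\chi+\m^\perp]$ as graded algebras.

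The second stage is a geometric transversality theorem: the action map $\alpha\colon N\times S\to \chi+\m^\perp$, $(n,s)\mapsto \Ad^*(n)s$, is an isomorphism of affine varieties. One first checks $S\subset \chi+\m^\perp$ (the Killing form pairs $\z_\g(f)$ trivially with $\m$), then counts dimensions, and differentiates $\alpha$ at $(1,\chi)$ using the $\sl_2$-decomposition to see that $\alpha$ is étale there. The Kazhdan cocharacter induces a contracting $\K^\times$-action on both sides with unique fixed point $\chi$, upgrading étaleness at one point to a global isomorphism. Pulling back coordinate rings and taking $N$-invariants yields $\K[\chi+\m^\perp]^N\cong \K[S]=S(\z_\g(e))$ with $\xi\in\z_\g(e)\cap\g(i)$ in degree $i+2$, as required.

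The core technical step, and the main obstacle, is to show that the tautological injection $\gr\bigl((U(\g)/U(\g)\m')^N\bigr)\hookrightarrow\bigl(\gr(U(\g)/U(\g)\m')\bigr)^N=\K[S]$ is surjective. Given a homogeneous class $p\in\K[S]$ of Kazhdan degree $k$, I lift it to some $\tilde p\in\F_k(U(\g)/U(\g)\m')$ with principal symbol $p$; then $N$-invariance of $p$ forces $n\cdot\tilde p-\tilde p\in \F_{k-1}$ for every $n\in N$. Using the unipotency of $N$, the local finiteness of its action on each Kazhdan-filtered piece, and the vanishing of higher $N$-cohomology on $\gr$ (which itself follows from the free $N$-action on $\chi+\m^\perp$ established in Step 2), one corrects $\tilde p$ by successively lower-order terms to obtain a genuine $N$-invariant lift. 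Equivalently, the Chevalley--Eilenberg complex computing $H^\bullet(\n,U(\g)/U(\g)\m')$ degenerates on associated graded to a Koszul complex whose higher cohomology vanishes by Step 2. This carries the weight of the proof: the correction procedure must stay within the Kazhdan filtration, and unipotence of $N$ in characteristic zero is essential.

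Finally, independence of $\y$ drops out. If $\y\subset\y'\subset\g(-1)$, then $\m_\y\subset\m_{\y'}$ and $N_{\y'}\subset N_\y$; the surjection $U(\g)/U(\g)\m_\y'\twoheadrightarrow U(\g)/U(\g)\m_{\y'}'$ is $N_{\y'}$-equivariant and, upon taking $N_\y$-invariants on the source and $N_{\y'}$-invariants on the target, yields a natural filtered algebra map $U(\g,e)_\y\to U(\g,e)_{\y'}$. On associated graded it becomes the map $\K[\chi+\m_\y^\perp]^{N_\y}\to \K[\chi+\m_{\y'}^\perp]^{N_{\y'}}$, which under the identifications of Step 2 (applied to both $\y$ and $\y'$, using $S\subset\chi+\m_{\y'}^\perp\subset\chi+\m_\y^\perp$) is the identity on $\K[S]$; hence the map itself is already a filtered isomorphism. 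Two arbitrary isotropic subspaces $\y,\y'\subset\g(-1)$ are joined by a zig-zag chain of isotropic subspaces with consecutive pairs comparable (for instance via any Lagrangians $L\supset\y$, $L'\supset\y'$ together with $L\cap L'$), so composing the above comparison isomorphisms gives the desired isomorphism $U(\g,e)_\y\cong U(\g,e)_{\y'}$.
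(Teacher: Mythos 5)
Your argument is essentially correct, but it is worth pointing out that the paper does not prove Theorem \ref{Thm:0.1.0} at all: it is quoted verbatim from Gan--Ginzburg \cite{GG}, Theorem 4.1, and what you have written is a faithful reconstruction of \emph{their} proof --- the identification $\gr(U(\g)/U(\g)\m')\cong\K[\chi+\m^\perp]$, the isomorphism $N\times S\to\chi+\m^\perp$ (\'etale at $(1,\chi)$ plus the contracting $\K^\times$-action), the vanishing of higher $\n$-cohomology via the Koszul/Chevalley--Eilenberg degeneration, and the zig-zag comparison of isotropic subspaces. The paper's own route to the same conclusions is genuinely different: it constructs $\Walg$ directly as $\K[X]^G$ for a Fedosov star-product on the model variety $X=G\times S$, so that $\gr\Walg=\K[S]$ holds by construction, and then Corollary \ref{Cor:3.21} produces a filtered isomorphism $U(\g,e)\cong\Walg$ from the decomposition theorem (Theorem \ref{Thm:3.13}, Corollary \ref{Cor:3.14}); this reproves the theorem for lagrangian $\y$ and in addition yields independence of $h'$. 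Your approach buys elementarity (no deformation quantization), while the paper's buys the extra structure needed for the rest of its results. Two small points you should tighten if you write this up in full: (a) the equality $\gr(U(\g)\m')=I(\chi+\m^\perp)$ (not merely $\supseteq$ of the ideal generated by the symbols) requires the regular-sequence/Koszul exactness argument you invoke later, so it belongs logically with Step 3 rather than being asserted outright in Step 1; and (b) the successive-correction procedure in Step 3 terminates only because the Kazhdan grading on $\K[\chi+\m^\perp]$ is bounded below (linear coordinates lie in $\g/\m$ and hence have degree $\geqslant 1$) --- this positivity should be stated, and it is also what makes the "distinguished" (chain-independent) nature of the isomorphisms in Step 4 plausible, though you do not verify that the composite along the zig-zag is independent of the chain chosen.
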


The second assertion was proved earlier by Premet for lagrangian
$\y$. Premet's approach uses a reduction to finite characteristic
and is quite involved, while Gan and Ginzburg used much easier
techniques. Actually Gan and Ginzburg considered the case $h'=h$ but
the proofs can be generalized to the general case directly.

Another crucial result concerning $W$-algebras is the category
equivalence theorem of Skryabin proved in the appendix to
\cite{Premet1} and then in \cite{GG}. To state it we need the notion
of a Whittaker module. Till the end of the subsection we assume that $\y$
is lagrangian.

\begin{defi}\label{defi:0.1.2}
We say that a $U(\g)$-module $M$ is a {\it Whittaker module} (for
$e$) if $\m'$ acts on $M$ by locally nilpotent endomorphisms.
\end{defi}

Let $M$ be a Whittaker $U(\g)$-module. Then $M^{\m'}$ has the
natural structure of a $U(\g,e)$-module. Conversely, set
$Q_\y:=U(\g)/U(\g)\m'$. This space has a natural structure of a
$(U(\g),U(\g,e))$-bimodule. So to any $U(\g,e)$-module $N$ we may
assign the $U(\g)$-module $\mathcal{S}(N):=Q_\y\otimes_{U(\g,e)}N$.

\begin{Thm}\label{Thm:0.1.1}
 $M\mapsto M^{\m'}, N\mapsto
\mathcal{S}(N)$ define quasi-inverse equivalences between the category of
all Whittaker $U(\g)$-modules and the category of
$U(\g,e)$-modules.
\end{Thm}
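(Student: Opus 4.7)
My plan is to realize the two functors as a mutually inverse adjoint pair by (i) establishing a PBW-type freeness of $Q_\y$ over $U(\g,e)$, (ii) using this freeness to show the unit of the adjunction is an isomorphism, and (iii) invoking local nilpotency to show the counit is an isomorphism. Throughout I will use that for lagrangian $\y$ we have $\n=\m$, so $N$-invariance on $Q_\y$ coincides with $\ad(\m)$-invariance, which in turn coincides with annihilation by the left action of $\m'$; in particular $U(\g,e)=Q_\y^{\m'}$ and $Q_\y$ is naturally a $(U(\g),U(\g,e))$-bimodule. A standard computation identifies $\mathcal{S}=Q_\y\otimes_{U(\g,e)}(-)$ as left adjoint to $(-)^{\m'}$.

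The main technical step is to produce a graded subspace $V\subset Q_\y$ such that multiplication yields an isomorphism $V\otimes U(\g,e)\to Q_\y$ of right $U(\g,e)$-modules and of left $\m'$-modules (with $\m'$ acting on the first factor alone), and such that $V^{\m'}=\K\cdot 1$. I would obtain this from an associated graded computation with the Kazhdan filtration: $\gr Q_\y$ is the coordinate ring of the scheme-theoretic fibre over $\chi|_\m$ of the moment map $\g^*\to\m^*$, and the lagrangian hypothesis realizes this fibre as a trivial affine-space bundle over $S$, giving $\gr Q_\y\cong\K[S]\otimes\K[V]$ for a suitable graded vector space $V$. A filtered lifting argument then embeds $V$ in $Q_\y$ with the required compatibilities, using Theorem \ref{Thm:0.1.0} to match the $\K[S]$-factor with $\gr U(\g,e)$.

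Granted this decomposition, the unit $N\to\mathcal{S}(N)^{\m'}$ is handled directly: $\mathcal{S}(N)=V\otimes N$ as a left $\m'$-module with $\m'$ acting on $V$ alone, so $\mathcal{S}(N)^{\m'}=V^{\m'}\otimes N=N$, naturally. In particular $\mathcal{S}$ is fully faithful. For the counit $\epsilon_M\colon\mathcal{S}(M^{\m'})\to M$ with $M$ Whittaker, surjectivity reduces to $M=U(\g)\cdot M^{\m'}$, which follows because the nilpotent Lie algebra $\m$ acts locally nilpotently (via $\m'$) on $M$, so every nonzero $\m'$-stable finite-dimensional subspace has a nonzero $\m'$-invariant, and an exhaustion-of-$M$ induction concludes. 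For injectivity, the kernel $K$ of $\epsilon_M$ sits inside the Whittaker module $\mathcal{S}(M^{\m'})$, hence is itself Whittaker; by the unit identification and the triangle identity, $\epsilon_M$ restricts to the inclusion $\mathcal{S}(M^{\m'})^{\m'}=M^{\m'}\hookrightarrow M$, so $K^{\m'}=0$, and local nilpotency then forces $K=0$.

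The principal obstacle is the compatible PBW decomposition in the second paragraph: graded freeness is transparent from the moment-map picture once the lagrangian hypothesis is used, but lifting to the filtered level while simultaneously preserving the right $U(\g,e)$-action and the left $\m'$-action requires some care. This is precisely the technical heart of Skryabin's appendix to \cite{Premet1} and of the Gan--Ginzburg reworking; the remaining adjunction-theoretic and local-nilpotency arguments are comparatively routine once the decomposition is in hand.
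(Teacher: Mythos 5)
Your outline follows the classical Skryabin/Gan--Ginzburg route (freeness of $Q_\y$ over $U(\g,e)$ plus the tensor--hom adjunction), whereas the paper derives the theorem from the decomposition theorem: the isomorphism $\Phi:\U^\wedge_{\m'}\to\W_V(\Walg)^\wedge_{\underline{\m}}$ of Theorem \ref{Thm:3.3} lets one regard \emph{any} Whittaker module $M$ as a $\W_V(\Walg)$-module on which the abelian subalgebra $\underline{\m}\subset\W_V$ acts locally nilpotently, and the Stone--von Neumann theorem for Heisenberg Lie algebras (Proposition \ref{Prop:3.13}, quoting \cite{Kac}) then gives $M\cong\K[\underline{\m}]\otimes M^{\m'}$ for every Whittaker $M$, from which both directions of the equivalence are immediate. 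So the two routes are genuinely different; the paper's yields the tensor decomposition for all Whittaker modules at once, not only for $Q_\y$ and the modules $\mathcal{S}(N)$.

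As written, however, your argument has real gaps. First, its central step is asserted in a form stronger than what the associated-graded computation delivers: you need $Q_\y\cong V\otimes U(\g,e)$ with the left $\m'$-action preserving $V\otimes 1$ and acting on that factor alone. Graded freeness of $\gr Q_\y$ over $\K[S]$ gives, after a filtered lifting, freeness of $Q_\y$ as a right $U(\g,e)$-module, but it is not clear that the complement can be chosen $\m'$-stable; even in the paper's framework the first tensor factor $\K[\underline{\m}]$ is stable only under $\underline{\m}$, not under the image of $\m'$ itself, and what one actually uses is the equality of invariants $M^{\m'}=M^{\underline{\m}}$. Deferring this point to ``the technical heart of Skryabin's appendix'' leaves the proof incomplete precisely where the work is. Second, your surjectivity argument for the counit does not close: if $M/U(\g)M^{\m'}\neq 0$, local nilpotency gives a nonzero $\m'$-invariant vector in the quotient, but to obtain a contradiction you must lift it to $M^{\m'}$, i.e., you need surjectivity of $M^{\m'}\to (M/U(\g)M^{\m'})^{\m'}$ --- exactness of $(-)^{\m'}$ on Whittaker modules, equivalently vanishing of the first $\chi$-twisted $\m$-cohomology. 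This does not follow from local nilpotency alone and is exactly what a tensor decomposition valid for arbitrary Whittaker modules (not just for $\mathcal{S}(N)$) would supply.
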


 The study of
Whittaker modules traces back to the paper of Kostant,
\cite{Kostant}, where the case of a principal nilpotent element was
considered. In this case $U(\g,e)$ is canonically isomorphic to the
center $\Centr(\g)$ of $U(\g)$.  Kostant's results were generalized
to the case of even nilpotent elements in the thesis of Lynch,
\cite{Lynch}. Some further considerations on Whittaker modules can
be found in \cite{Moeglin}.

There are some other special cases, where the algebras $U(\g,e)$
were studied in detail. In the paper  \cite{Premet2} the case when
$e$ is a minimal nilpotent element was considered. Brundan and
Kleshchev, \cite{BK1},\cite{BK2}, studied the algebras $U(\g,e)$ for
$\g=\sl_n$. Their results include the classification of irreducible
finite dimensional $U(\g,e)$-modules. Their approach is based on a
relation between $U(\g,e)$ and a certain Hopf algebra called a {\it
shifted Yangian}. This relation is a special feature of the
$\sl_n$-case.

Our approach to $W$-algebras is completely different. It is based on
the observation that $U(\g,e)$ is the invariant algebra for a
certain action of $G$ on a quantized affine symplectic variety. Our
main results are presented in the next subsection.

Finally, let us mention a relation between $W$-algebras of finite type and affine
$W$-algebras. The latter are certain vertex algebras arising in QFT. According to Zhu,
to any vertex algebra one can assign a certain associative algebra ({\it Zhu algebra}) in a canonical
way, see Section 2 and especially Definition 2.8 in \cite{DSK}. The representation theory of a vertex algebra
is closely related to that of its Zhu algebra, see, for example, \cite{DSK}, Proposition 2.30 for
a precise statement.  It turns out that the Zhu algebra
of an affine W-algebra is a W-algebra of finite type, \cite{DSK}, Theorem 5.10. So the study of representations
of $U(\g,e)$ is important for understanding of those of the corresponding  affine W-algebra.


\subsection{Main results}\label{SUBSECTION_resuls}
We use the notation of the previous subsection and assume that $\y$
is a lagrangian subspace of $\g(-1)$. Set $\U:=U(\g), V:=[\g,f]$. We
have the symplectic form $\langle\chi,[\cdot,\cdot]\rangle$ on $V$.
So we can form the Weyl algebra $\W_V$ of $V$.

In Subsection \ref{SUBSECTION_isomorphism} we introduce a certain
associative filtered algebra $\Walg$ such that $\gr \Walg\cong
\K[S]$. It will follow from Corollary \ref{Cor:3.21} that
$\Walg\cong U(\g,e)$ as filtered algebras. We write $\W_V(\Walg)$
instead of $\W_V\otimes\Walg$.

Our first result is the decomposition theorem. Roughly speaking, it
asserts that after suitable completions the algebras $\U$ and
$\W_V(\Walg)$ become isomorphic. To give a precise statement we need
to specify the notion of a completion.

Let $\A$ be an associative algebra with unit and $\f$ be a Lie
subalgebra of $\A$. Suppose that
 $\ad(\xi)$ is a locally nilpotent endomorphism of $\A$ for any $\xi\in\f$ and any element of $\A$ lies
 in a finite dimensional $\ad(\f)$-module.
 Set $\A^\wedge_\f:=\varprojlim_{k\rightarrow \infty} \A/\A \f^k$.
There is a natural topological algebra structure on $\A^\wedge_\f$
(where the kernels of the natural epimorphisms
$\A^\wedge_\f\rightarrow \A/\A \f^k$ form a fundamental set of
neighborhoods of $0$), compare with \cite{Gi}, Section 5.  Clearly,
$\A^\wedge_\f$ is complete w.r.t. this topology. The natural map
$\A\rightarrow \A^\wedge_{\f}$ is an algebra homomorphism.

\begin{Thm}\label{Thm:3.3}
Let $\underline{\m}$ denote the subspace of $V$ equal to $\m$ so that we
can consider $\underline{\m}$ as a commutative Lie subalgebra in
$\W_V$. Then there is an  isomorphism
$\Phi:\U^\wedge_{\m'}\rightarrow \W_V(\Walg)^\wedge_{\m}$ of
topological algebras such that $\Phi(\J_1)=\J_2$, where $\J_1,\J_2$
denote the kernels of the natural epimorphisms
$\U^\wedge_{\m'}\twoheadrightarrow \U/\m'\U$,
$\W_V(\Walg)^\wedge_{\underline{\m}}\twoheadrightarrow
\W_V(\Walg)/\underline{\m}\W_V(\Walg)$.
\end{Thm}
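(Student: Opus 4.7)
The plan is to view Theorem~\ref{Thm:3.3} as the quantized version of the classical Kazhdan--Kostant transverse slice theorem. Classically, the coadjoint action map $N\times S\to \chi+\n^\perp$ is an isomorphism onto an open neighborhood of $\chi$, so a formal neighborhood of $\chi$ in $\g^*$ splits as a product of a formal neighborhood of $0$ in $V=[\g,f]$ with the Slodowy slice $S$. At the level of coordinate rings this becomes $\K[\g^*]^\wedge_{\m'_{cl}}\cong \K[V]^\wedge_{\underline{\m}}\otimes\K[S]$, where $\m'_{cl}$ is the classical counterpart of $\m'$. The aim is to lift this identification to the quantized level.

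First I would check that both $\U^\wedge_{\m'}$ and $\W_V(\Walg)^\wedge_{\underline{\m}}$, equipped with the induced Kazhdan filtration, are filtered quantizations of the same completed Poisson algebra. The identification $\gr\U\cong\K[\g^*]$ in the Kazhdan grading combined with separatedness of the filtration gives $\gr\U^\wedge_{\m'}\cong \K[\g^*]^\wedge_{\m'_{cl}}$. On the other side, $\gr\W_V(\Walg)\cong\K[V]\otimes\K[S]$ by the construction of $\Walg$ together with Theorem~\ref{Thm:0.1.0}. The classical slice isomorphism then identifies these two graded Poisson algebras.

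The key step is to lift this classical identification to an isomorphism of the completed filtered algebras. I would carry it out by a Moser-type inductive procedure: pick a generating set $\xi\in\g$ of $\U$, take initial approximations to $\Phi(\xi)$ coming from the classical identification, and correct them order by order in the Kazhdan filtration. Compatibility of the corrections at each stage reduces to a cohomological statement for $N$-equivariant formal deformations of the Poisson variety $V\times S$ near $\{0\}\times S$; equivalently, to a uniqueness result for $N$-equivariant quantizations of this formal Poisson variety. Convergence in the completed topology uses local nilpotency of the $\n$-action and separatedness of both completions. This rigidity/uniqueness step is where I expect the main difficulty to lie.

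Finally, once $\Phi$ has been built $\n$-equivariantly and in a filtered way, the identity $\Phi(\J_1)=\J_2$ follows automatically. Indeed $\J_1$ and $\J_2$ are by definition the kernels of reduction modulo the completed ideals generated by $\m'$ and $\underline{\m}$, and the construction matches these generating subspaces (both lift the same classical subspace under the slice identification). So the two ideals, and hence the kernels $\J_1$ and $\J_2$ in the completions, get identified.
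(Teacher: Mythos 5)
Your high-level picture (lift the classical splitting of a completed neighborhood along the slice to the quantum level) is the right one, but the proposal leaves the actual content of the theorem unproved. The entire weight of your argument rests on the ``rigidity/uniqueness result for $N$-equivariant quantizations'' of the formal Poisson variety $V\times S$ near $\{0\}\times S$, which you explicitly defer; that statement is not standard, is not obviously true as formulated (the relevant group $N$ is unipotent, so none of the averaging/reductivity arguments that make equivariant rigidity statements accessible apply), and no mechanism for killing the order-by-order obstructions is identified. The paper avoids this entirely by changing the geometry: instead of completing along the slice direction and working $N$-equivariantly, it passes to the equivariant model $T^*G\supset Gx=G(1,\chi)$ versus $X\times V^*$, completes along the \emph{closed $G$-orbit} $Gx$, and invokes the Luna slice theorem, the equivariant Darboux theorem, and Fedosov's classification of star-products (Proposition \ref{Prop:1.5}) for the \emph{reductive} group $G\times G_0\times\K^\times$ (Theorem \ref{Thm:3.13}); the statement about $\K[\g^*]^\wedge_\chi$ is then obtained by taking $G$-invariants. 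If you want to keep your route you must either prove the $N$-equivariant rigidity statement directly or find a substitute for it; as written, the core of the theorem is assumed rather than proved.

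Two further points are glossed over. First, the completion $\U^\wedge_{\m'}$ is an $\m'$-adic completion, while any quantization/rigidity argument (yours or the paper's) naturally produces an isomorphism of completions of a different nature (graded completions at a point, the algebras $\A^\heartsuit$ and their Rees algebras $\K[[\vf^*,\hbar]]_{\K^\times-fin}$ in the paper's notation). Your claim that $\gr\U^\wedge_{\m'}\cong\K[\g^*]^\wedge_{\m'_{cl}}$ ``by separatedness'' hides this: the Kazhdan filtration interacts badly with the $\m'$-adic topology because $\m'$ contains elements of nonpositive degree, and reconciling the two topologies is exactly the content of Subsection \ref{SUBSECTION_completion} (Lemmas \ref{Lem:1.43}, \ref{Lem:1.44}). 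Second, $\Phi(\J_1)=\J_2$ does not ``follow automatically'' from matching the classical generating subspaces: $\Phi(\m')$ is only a perturbation of $\underline{\m}$ by higher-order terms, and one must show that the closed left ideal it generates coincides with the one generated by $\underline{\m}$ — this is the purpose of Lemma \ref{Lem:1.43} and condition (1) of Corollary \ref{Cor:3.14}, and it requires the precise shape of the perturbation ($u_i\in(\F_{d_i}\A^\heartsuit\cap\vf^2A^\heartsuit)+\F_{d_i-2}\A^\heartsuit$), not just equality of symbols.
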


We note that both natural morphisms $\A\rightarrow \A^\wedge_\f$
(with $(\A,\f)=(\U, \m'),(\W_V(\Walg),\underline{\m})$) are
injective, since in both cases $\A$ is a free $U(\f)$-module.

Using Theorem \ref{Thm:3.3}, one can  prove Theorem \ref{Thm:0.1.1},
see Proposition \ref{Prop:3.13}.

Our second principal result is a comparison between the sets
$\Pr(\U),\Pr(\Walg)$ of all prime ideals  of the algebras $\U,\Walg$
whose intersection with the centers of $\U,\Walg$ are of codimension
1. We recall that an ideal $\I$ in an associative algebra $\A$ is
said to be {\it prime}, if $a\A b\not\subset \I$ whenever
$a,b\not\in\I$. It is known, see, for example, \cite{Jantzen}, 7.3,
that any prime ideal $\I\subset\U$ with
$\codim_{\Centr(\g)}\Centr(\g)\cap \I=1$  is primitive, i.e., is the
annihilator of an irreducible module.

Further, let us recall the notion of the associated variety of an
ideal. Let $\A$ be an associative algebra with unit equipped with an
increasing filtration $\F_i\A$ such that $\F_0\A= \K,
\F_{-1}\A=\{0\},\cup_i\F_i\A=\A$. We suppose that
$[\F_i\A,\F_j\A]\subset \F_{i+j-1}\A$ and the associated graded
algebra $\gr(\A):=\bigoplus_{i\in\Z} \F_i\A/\F_{i-1}\A$ is finitely
generated. If $\I$ is an ideal in $\A$, then
$\gr(\I):=\bigoplus_{i\in\Z}(\F_i\A\cap\I)/(\F_{i-1}\A\cap\I)$ is an
ideal of $\gr(\A)$. By the associated variety $\VA(\I)$ of $\I$ we
mean the set of zeroes of $\gr(\I)$ in $\Spec(\gr(\A))$.

The algebras $\U,\Walg$ are both equipped with filtrations
satisfying the assumptions of the previous paragraph (we consider
the standard filtration on $\U$). Recall that $\VA(\J)$ is the
closure of a single nilpotent orbit for any primitive ideal $\J$ of
$\U$, see \cite{Jantzen2}, 9.3, for references.
To any two-sided ideal
$\J\subset\U$ and an irreducible component $Y$ of $\VA(\J)$ we assign its {\it multiplicity}
$\mult_Y(\J)=\dim_{\Quot(S(\g)/\p)}(S(\g)/\gr(\J))_{\p}$, where
$\p$ is the ideal of $Y$. For primitive $\J$ we write $\mult\J$ instead of $\mult_Y\J$.

The center $\Centr(\g)$ of $\U$ is contained in $\U^{\m'}$ and there
is the natural homomorphism $\U^{\m'}\rightarrow U(\g,e)$. So we
have the homomorphism $\iota:\Centr(\g)\rightarrow U(\g,e)$. It is
known, see \cite{Premet1}, that  $\iota:\Centr(\g)\rightarrow
U(\g,e)\cong \Walg$ is injective. As Premet noted in \cite{Premet2},
Section 5, footnote 2, the image of $\iota$ coincides with the
center of $\Walg$.

For an associative algebra $\A$ by $\Id(\A)$ we denote the set of
its (two-sided) ideals.

\begin{Thm}\label{Thm:0.2.2}
There is a map $\I\mapsto \I^\dagger:\Id(\Walg)\rightarrow \Id(\U)$
with the following properties:
\begin{itemize}
\item[(i)] $(\I_1\cap \I_2)^\dagger=\I_1^\dagger\cap \I_2^\dagger$.
\item[(ii)] $\Ann(N)^\dagger=\Ann(\mathcal{S}(N))$ for any $\Walg$-module $N$, where, recall,
$\mathcal{S}(N)=Q_{\y}\otimes_{\Walg}N$\footnote{Note that the equality for the annihilators cannot be taken for the
definition of $\I^\dagger$: indeed, a priory, it is not clear that $\Ann(N_1)=\Ann(N_2)$ implies $\Ann(\mathcal{S}(N_1))=\Ann(\mathcal{S}(N_2))$.}.
\item[(iii)] $\iota(\I^\dagger\cap \Centr(\g))=\I\cap\iota(\Centr(\g))$ for any $\I\in\Id(\Walg)$.
\item[(iv)] $\I^\dagger\in \Pr(\U)$ provided $\I\in \Pr(\Walg)$.
\item[(v)] For any $\I\in\Id(\Walg)$ we have the inclusion
\begin{equation}\label{eq:0.1}\VA(\I^\dagger)\supset \overline{G\VA(\I)}\end{equation} (recall that $\Spec(\gr(\Walg))=S$ is the subvariety
in $\g^*$).  In
particular, $G\chi\subset \VA(\I^\dagger)$. If (\ref{eq:0.1}) turns into equality we say
that an ideal $\I$ is {\rm admissible}. The set of all admissible
elements of $\Pr(\Walg)$ is denoted by $\Pr^a(\Walg)$.
\item[(vi)] Any ideal $\I$ of finite codimension in $\Walg$ is admissible.
\item[(vii)] Any fiber of the map $\Pr^a(\Walg)\rightarrow \Pr(\U),\I\mapsto \I^\dagger,$
is finite. More precisely, for any $\J\in \Id(\U)$ there is
$\J_\dagger\in\Id(\Walg)$ satisfying the condition that the fiber of
$\J$ consists of all $\I\in \Pr(\Walg)$ such that $\I$ is minimal
over  $\J_\dagger$ and $\dim \VA(\I)\geqslant \dim \VA(\J)-\dim
G\chi$.
\item[(viii)] Let $\J\in \Pr(\U)$ be such that $\VA(\J)=\overline{G\chi}$. Then the fiber of $\J$ consists of all
minimal primes of $\J_{\dagger}$. In particular, there is $\I\in \Pr(\Walg)$
of finite codimension with $\I^\dagger=\J$.
\item[(ix)] Let $\J$ be as in (viii) and $\I_1,\ldots,\I_k$ be all prime
ideals of $\Walg$ such that $\I_j^\dagger=\J, j=1,\ldots,k$.
Then $$\mult(\J)\geqslant \sum_{j=1}^k\codim_{\Walg}\I_j\geqslant
k\Goldie(\U/\J)^2.$$ Here
$\Goldie(\U/\J)$ denotes the Goldie rank of $\U/\J$.
\end{itemize}
\end{Thm}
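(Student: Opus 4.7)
The plan is to construct $\I\mapsto\I^\dagger$ by transporting ideals across the decomposition isomorphism $\Phi:\U^\wedge_{\m'}\xrightarrow{\sim}\W_V(\Walg)^\wedge_{\underline{\m}}$ from Theorem~\ref{Thm:3.3}. Given $\I\in\Id(\Walg)$, I form the two-sided ideal $\W_V\otimes\I$ of $\W_V(\Walg)$, take its closure in the completion, pull it back through $\Phi^{-1}$, and intersect with $\U$ (which embeds into $\U^\wedge_{\m'}$). Property (i) follows formally from flatness of $\W_V$ and from the fact that both closure and restriction along $\U\hookrightarrow\U^\wedge_{\m'}$ commute with intersections of ideals. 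For (iv), the ideal $\W_V\otimes\I$ is prime in $\W_V(\Walg)$ whenever $\I$ is prime (since $\W_V$ is simple), and primeness then descends through completion and intersection with $\U$ by a standard argument about good filtrations on Noetherian algebras.

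For (ii) I would identify the completed $\U$-module $\mathcal{S}(N)^\wedge_{\m'}$ with $\mathcal{L}\otimes N$ under $\Phi$, where $\mathcal{L}:=\W_V/\underline{\m}\W_V$ is the unique simple $\W_V$-module annihilated by $\underline{\m}$. Since $\mathcal{L}$ is faithful, the annihilator of $\mathcal{L}\otimes N$ in $\W_V(\Walg)^\wedge_{\underline{\m}}$ is exactly the closure of $\W_V\otimes\Ann_\Walg(N)$, and restricting to $\U$ gives (ii). Property (iii) follows because $\Phi$ must send $\Centr(\g)\subset\U$ into $1\otimes\iota(\Centr(\g))\subset 1\otimes\Walg$: central elements of $\U$ commute with all of $\W_V$ after the identification, and by Premet the center of $\Walg$ coincides with $\iota(\Centr(\g))$.

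For (v), the associated-graded version of $\Phi$ identifies a formal neighborhood of $G\chi$ in $\g^*$ with that of the zero section of the smooth Slodowy map $V\times S\to\g^*$; under this identification, $\VA(\I^\dagger)$ corresponds locally to $V\times\VA(\I)$, which yields the inclusion (\ref{eq:0.1}). Part (vi) follows because finite codimension of $\I$ forces $\VA(\I)=\{\chi\}$, so $\overline{G\cdot\VA(\I)}=\overline{G\chi}$, and the local picture then forces $\dim\VA(\I^\dagger)=\dim G\chi$, giving equality.

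The heart of the argument lies in (vii)--(ix). I define $\J_\dagger\in\Id(\Walg)$ by running the transport in reverse: take the closure of $\J$ in $\U^\wedge_{\m'}$, push it through $\Phi$, and intersect with (the image of) $1\otimes\Walg$. Any admissible $\I$ with $\I^\dagger=\J$ must contain $\J_\dagger$, and the dimension constraint $\dim\VA(\I)\geqslant\dim\VA(\J)-\dim G\chi$ singles out finitely many minimal primes, giving (vii); when $\VA(\J)=\overline{G\chi}$ this forces $\dim\VA(\I)=0$ so all minimal primes over $\J_\dagger$ appear, which is (viii). The main obstacle is (ix). The first inequality $\mult\J\geqslant\sum_j\codim_\Walg\I_j$ comes from transversality of $S$ to $G\chi$: after passing to associated graded, $\gr(\U/\J)$ localized at the generic point of $\overline{G\chi}$ is computed by the stalk of $\gr(\Walg/\J_\dagger)$ at $\chi$, whose length equals $\sum_j\dim_\K(\Walg/\I_j)$ since each $\VA(\I_j)=\{\chi\}$. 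For the second inequality, each prime $\I_j$ of finite codimension satisfies $\Walg/\I_j\cong M_{n_j}(\K)$ with $n_j^2=\codim_\Walg\I_j$; Skryabin's functor sends the simple $\Walg/\I_j$-module $\K^{n_j}$ to a $\U/\J$-module whose endomorphism ring contains $M_{n_j}(\K)^{\mathrm{op}}$, so it decomposes into $n_j$ copies of an irreducible $\U/\J$-module, forcing $n_j\geqslant\Goldie(\U/\J)$ and hence $\codim_\Walg\I_j\geqslant\Goldie(\U/\J)^2$.
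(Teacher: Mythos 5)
Your definition of $\I^\dagger$ (push $\W_V\otimes\I$ into the completion, take the closure, pull back along $\Phi$ and intersect with $\U$) is the paper's, and your treatment of (i)--(v) is essentially the paper's route (for (iii) your "the center must land in $1\otimes\iota(\Centr(\g))$" argument is a legitimate variant of the paper's annihilator computation). But (vi) has a real gap. Because $\I^\dagger$ is obtained by \emph{intersecting} a completed ideal with $\U$, the local picture at $\chi$ only bounds $\I^\dagger$ from above, hence $\VA(\I^\dagger)$ from \emph{below}: that is exactly (v). It does not show that $\I^\dagger$ is large enough to force $\VA(\I^\dagger)\subset\overline{G\chi}$ --- a priori the intersection with $\U$ could be tiny and $\VA(\I^\dagger)$ much bigger than the formal neighbourhood suggests. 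The paper closes this by a genuinely global argument: for a faithful finite-dimensional $\Walg/\I$-module $M$ it proves $\Dim_\U\mathcal{S}(M)=\Dim_{\Walg}M+\dim\m=\dim\m$ (Proposition \ref{Prop:3.41}), and then combines (ii) with the inequality $\dim\VA(\Ann(N))\leqslant 2\Dim(N)$ for $\U$-modules. Some substitute for this Gelfand--Kirillov dimension step is indispensable.

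The second inequality in (ix) is where your argument actually fails. By Skryabin's equivalence $\End_\U(\mathcal{S}(\K^{n_j}))\cong\End_{\Walg}(\K^{n_j})=\K$, and even granting that $\mathcal{S}(\Walg/\I_j)$ is a sum of $n_j$ copies of one irreducible, the inference ``a faithful module splits into $n$ copies of an irreducible, hence $n\geqslant\Goldie$'' is false: $M_r(\K)$ acts faithfully and irreducibly on $\K^r$ (one copy) while its Goldie rank is $r$. The paper proves $\Goldie(\U/\I_j^\dagger)\leqslant(\dim\Walg/\I_j)^{1/2}$ differently (Proposition \ref{Prop:3.33}): it constructs an embedding $\U/\I_j^\dagger\hookrightarrow\underline{\A}^\heartsuit\otimes\Walg/\I_j$, where $\underline{\A}^\heartsuit$ is a completion of the Weyl algebra $\W_V$ shown to be a Noetherian \emph{domain} (Proposition \ref{Prop:1.41}, a Hilbert-basis-type argument), and then applies Warfield's theorem on Goldie ranks in ring extensions; the point of $\underline{\A}^\heartsuit$ is precisely to replace the non-Noetherian endomorphism algebra of the multiplicity space. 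Finally, in (vii)--(viii) you only establish one inclusion of the fiber description: that members of the fiber contain $\J_\dagger$ and are minimal primes of the right dimension. The converse --- that every such minimal prime $\I$ satisfies $\I^\dagger=\J$ and is admissible --- requires first showing $(\J_\dagger)^\dagger=\J$, which the paper gets from $\VA(\J_\dagger)=\VA(\J)\cap S$ (Proposition \ref{Prop:3.24}) together with the Borho--Kraft result that there are no proper inclusions of primes with associated varieties of equal dimension.
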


The most important part of this theorem is the description of the
fiber of $\J\in \Pr(\U)$ such that $\VA(\J)=\overline{G\chi}$.
Premet has also proved that this fiber is nonempty. A special case
of this claim (when $\J$ has  {\it rational infinitesimal
character}) is the main result of \cite{Premet3}. In the case
$\g=\sl_n$ it follows from \cite{BK2} that under the assumptions on
$\J$ made above there is unique $\I\in\Pr(\Walg)$ such that
$\I^\dagger=\J$. However, in general, this is not true. Assertion
(ix) suggests that the first case, where a counterexample is
possible, is $\g=\mathfrak{sp}_4, e$ is subregular. As I was
communicated by Premet, there is indeed some $\J$, whose fiber consists
of two elements\footnote{In a recent preprint \cite{HC} the author obtained a much more precise
description of  fibers of  $\J\in \Pr(\U), \VA(\J)=\overline{G\chi}$. Namely the component group
$C(e)=Z_G(e)/Z_G(e)^\circ$ acts on the set of all prime ideals of $\Walg$ of finite codimension and any
fiber is a single orbit of this action.}.

Let us also remark that assertion (ix) proves a
part of Conjecture 6.22 from \cite{McGovern}:
$\Goldie(\U/\J)\leqslant \sqrt{\mult(\J)}$ for any primitive ideal $\J\subset \U$.

Finally, we study finite dimensional representations of $\Walg$.


\begin{Thm}\label{Thm:0.2.3}
\begin{enumerate}
\item $\Walg$ has a finite dimensional module $V$. If $\g$ is a classical Lie algebra,
then there is a one-dimensional $\Walg$-module $V$.
\item Let $V$ be a $\Walg$-module. Then there is a
$\Walg$-module structure on $L(\lambda)\otimes V$, where
$L(\lambda)$ denotes the finite dimensional $\g$-module of highest
weight $\lambda$, and the representation of $\Walg$ in
$\bigoplus_{\lambda}L(\lambda)\otimes V$ is faithful.
\end{enumerate}
\end{Thm}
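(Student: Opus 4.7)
For Part~(1), the existence of a finite-dimensional $\Walg$-module is immediate from Theorem~\ref{Thm:0.2.2}(viii): pick any primitive ideal $\J\subset\U$ with $\VA(\J)=\overline{G\chi}$; such $\J$ exist for every nilpotent orbit (e.g., by Dixmier's surjectivity, or by induction from a parabolic whose Richardson orbit is $\overline{G\chi}$). Part (viii) then yields $\I\in\Pr(\Walg)$ of finite codimension with $\I^\dagger=\J$, so $\Walg/\I$ is finite-dimensional. For the one-dimensional refinement when $\g$ is classical, strengthen the choice of $\J$ to be in addition completely prime with $\mult(\J)=1$; such primitive ideals are known to exist in classical types by results of Mœglin and Joseph. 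Theorem~\ref{Thm:0.2.2}(ix) then reads
\[1=\mult(\J)\geq\sum_{j=1}^k\codim_\Walg\I_j\geq k\cdot\Goldie(\U/\J)^2=k,\]
forcing $k=1$ and $\codim_\Walg\I=1$, giving the one-dimensional representation.

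For Part~(2), construct the $\Walg$-structure on $L(\lambda)\otimes V$ as follows. Form the Whittaker module $\mathcal{S}(V)=Q_\y\otimes_\Walg V$ and equip $L(\lambda)\otimes\mathcal{S}(V)$ with the $\U$-action via the coproduct. A direct calculation gives
\[(\xi-\chi(\xi))\cdot(l\otimes m)=\xi l\otimes m+l\otimes(\xi-\chi(\xi))m\quad\text{for }\xi\in\m,\]
a sum of two commuting operators, the first nilpotent on the finite-dimensional $L(\lambda)$ (since $\xi$ is nilpotent in $\g$) and the second locally nilpotent on the Whittaker module $\mathcal{S}(V)$. Hence $\m'$ acts locally nilpotently, so $L(\lambda)\otimes\mathcal{S}(V)$ is itself Whittaker, and by Theorem~\ref{Thm:0.1.1} its space of $\m'$-invariants carries a canonical $\Walg$-module structure. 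A PBW argument, using that $Q_\y$ is a free right $\Walg$-module, identifies $(L(\lambda)\otimes\mathcal{S}(V))^{\m'}$ with $L(\lambda)\otimes V$ as a vector space, transferring the action.

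For faithfulness, set $\I=\Ann_\Walg(\bigoplus_\lambda L(\lambda)\otimes V)$. Skryabin's equivalence together with Theorem~\ref{Thm:0.2.2}(ii) gives $\I^\dagger=\Ann_\U(\bigoplus_\lambda L(\lambda)\otimes\mathcal{S}(V))$. The central subalgebra $\iota(\Centr(\g))\subset\Walg$ acts on $(L(\lambda)\otimes\mathcal{S}(V))^{\m'}$ through the coproduct action of $\Centr(\g)$ on $L(\lambda)\otimes\mathcal{S}(V)$; by Kostant's translation principle, the central characters supported by these modules trace out a translate of the weight lattice in $\Spec\Centr(\g)$ as $\lambda$ varies, and since any polynomial function vanishing on this translate must be identically zero, $\I^\dagger\cap\Centr(\g)=0$. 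By Theorem~\ref{Thm:0.2.2}(iii) this forces $\I\cap\iota(\Centr(\g))=0$. If $\I$ were nonzero, Theorem~\ref{Thm:0.2.2}(v) would give $\VA(\I^\dagger)\supset\overline{G\chi}$; combining this with $\I^\dagger\cap\Centr(\g)=0$ and the description of the $\dagger$-fibers over primitives with associated variety $\overline{G\chi}$ supplied by parts (vii)--(viii) of Theorem~\ref{Thm:0.2.2} yields a contradiction, so $\I=0$.

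\emph{The main obstacle} is the final bootstrap from vanishing on the center of $\Walg$ to $\I=0$: the translation argument only directly controls the central part of $\I$, and one must combine it with the structural results (v)--(viii) of Theorem~\ref{Thm:0.2.2} to exclude a nonzero $\I$ that avoids the center. The remaining steps are either direct calculation or straightforward applications of Theorems~\ref{Thm:0.1.1} and~\ref{Thm:0.2.2}.
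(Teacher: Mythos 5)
Your treatment of the first half of Part (1) matches the paper (apply Theorem \ref{Thm:0.2.2}(viii) to a primitive $\J$ with $\VA(\J)=\overline{G\chi}$), and your construction of the $\Walg$-module structure on $L(\lambda)\otimes V$ in Part (2) --- tensoring the Whittaker module $\mathcal{S}(V)$ with $L(\lambda)$ and taking $\m'$-invariants via Skryabin's equivalence --- is a correct and genuinely different route from the paper, which instead realizes $L(\lambda)\otimes V$ inside $\widetilde{\Walg}\otimes_{\Walg}V$ using the $G$-equivariant algebra $\widetilde{\Walg}$. However, two steps of your argument have genuine gaps.

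First, for the one-dimensional module in classical type you need a primitive (or at least prime) ideal $\J$ with $\overline{G\chi}$ a component of $\VA(\J)$ and $\mult_{\overline{G\chi}}(\J)=1$, and you assert this from ``results of M\oe glin and Joseph.'' M\oe glin's theorem produces \emph{completely prime} primitive ideals with prescribed associated variety, but complete primeness ($\Goldie=1$) does not give multiplicity one --- the implication runs the other way (that is the content of McGovern's conjecture $\Goldie\leqslant\sqrt{\mult}$). The paper does not take this existence for granted: Subsection \ref{SUBSECTION_mult_1} constructs such an ideal explicitly by quantizing the Kraft--Procesi realization of $\overline{G\chi}$ as a symplectic reduction, producing $\J$ with $\gr(\U/\J)=\K[\overline{G\chi}]$, and then applies Proposition \ref{Prop:3.24} (rather than part (ix)) to get $\codim_{\Walg}\J_\dagger=1$. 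Without that construction, or a precise citation actually yielding multiplicity one, this half of Part (1) is not proved.

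Second, your faithfulness argument in Part (2) does not close, as you yourself note. The translation-principle step at best shows $\I\cap\iota(\Centr(\g))=0$, but $\Walg$ is not a finite module over its center (the null-fiber of $S\to\Spec\Centr(\g)$ is positive-dimensional unless $e$ is regular), so a nonzero two-sided ideal can perfectly well meet the center trivially; and parts (v)--(viii) of Theorem \ref{Thm:0.2.2} give no contradiction either, since $\VA(\I^\dagger)\supset\overline{G\chi}$ is also satisfied by $\I^\dagger=0$. The missing idea is Proposition \ref{Prop:3.11}: the equivariant $W$-algebra $\widetilde{\Walg}$ is \emph{simple}, so the $\widetilde{\Walg}$-module $\widetilde{\Walg}\otimes_{\Walg}V$ is faithful, hence its annihilator in $\Walg=\widetilde{\Walg}^G$ is zero; one then identifies the isotypic component $\widetilde{\Walg}_\lambda\otimes_{\Walg}V$ with $\K[G]_\lambda\otimes V\cong L(\lambda)\otimes L(\lambda)^*\otimes V$ by lifting the $G\times\K^\times$-equivariant splitting $\K[X]_\lambda=\K[G]_\lambda\otimes\K[S]$ through the filtration. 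Some such global input (simplicity of $\widetilde{\Walg}$, or an equivalent) is indispensable; the purely central argument cannot substitute for it.
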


Existence of a finite dimensional $\Walg$-module was proved earlier
by Premet, \cite{Premet3}. Existence of a one-dimensional
$\Walg$-module as well as assertion (ii) was previously known for
$\g=\sl_n$ (Brundan and Kleshchev, \cite{BK2}) and for even $e$
(\cite{Lynch}), where these claims are rather straightforward.
Further, if $e$ is minimal, then the existence of a one-dimensional
module was proved by Premet in \cite{Premet2}. Assertion 2 of the
theorem answers  Question 3.1 from \cite{Premet2} in an affirmative way.

\subsection{The content of the paper}\label{SUBSECTION_struct}
The tool we use in the study of $W$-algebras is deformation
quantization. Therefore this paper includes a preliminary section
 on deformation quantization (Section \ref{SECTION_defquant}). It consists of two subsections. In the first
one we recall standard definitions and results related to
star-products and their equivalence. Subsection
\ref{SUBSECTION_Fedosov} recalls some facts about the Fedosov
deformation quantization in the context of affine algebraic
varieties. 
%

The main part of this paper is Section \ref{SECTION_Walg}. There we
apply the machinery of   Section \ref{SECTION_defquant} to the study
of $W$-algebras. In Subsection \ref{SUBSECTION_isomorphism} we
define the filtered associative algebra $\widetilde{\Walg}$ equipped
with an action of $G$. The algebra $\gr\widetilde{\Walg}$ is naturally
identified with the algebra of regular functions on a certain affine
symplectic $G$-variety. We set $\Walg:=\widetilde{\Walg}^G$. This is
a filtered associative algebra with $\gr\Walg=\K[S]$.

Subsection \ref{SUBSECTION_completion} is of very technical nature.
It studies different completions of an associative filtered algebra,
whose product is induced from a polynomial star-product on the
associated graded algebra. The results obtained in this subsection
are used  in the proofs of Theorems \ref{Thm:3.3},\ref{Thm:0.2.2}.

In Subsection \ref{SUBSECTION_decomposition} we prove Theorem
\ref{Thm:3.3}. At first, we check that certain affine symplectic
formal schemes equipped with star-products and with  actions of $G$
are isomorphic (Theorem \ref{Thm:3.13}). Using this theorem we check
that there is an isomorphism  $\Phi$ between certain filtered
algebras $\U^\heartsuit,\W_V(\Walg)^\heartsuit$ such that
$\U\subset\U^\heartsuit\subset\U^\wedge_{\m'}$ and
$\W_V(\Walg)\subset
\W_V(\Walg)^\heartsuit\subset\W_V(\Walg)^\wedge_{\underline{\m}}$.
This result yields an isomorphism of filtered algebras $\Walg$ and
$U(\g,e)$ (where the latter is constructed from a lagrangian $\y$).
Then we use results of Subsection \ref{SUBSECTION_completion} to
show that the topological algebras
$\U^\wedge_{\m'},\W_V(\Walg)^\wedge_{\underline{\m}}$ are naturally
identified with some completions of $\U^\heartsuit,
\W_V(\Walg)^\heartsuit$ and $\Phi$ induces their isomorphism.
Finally, we derive Theorem \ref{Thm:0.1.1} from Theorem
\ref{Thm:3.3}. The key point is that a Whittaker module $M$ is
isomorphic to $\K[\underline{\m}]\otimes M^{\m'}$ as an
$\W_V(\Walg)$-module.

Subsection \ref{SUBSECTION_ideals} is devoted to the proof of
Theorem \ref{Thm:0.2.2}. At first we give two equivalent definitions
of $\I^\dagger$. Then we prove assertions (i)-(iv) of Theorem
\ref{Thm:0.2.2}. To prove the remaining assertions we  construct a
certain map $\J\mapsto \J_{\dagger}:\Id(\U)\rightarrow \Id(\Walg)$
and study its properties and its relation to $\I\mapsto\I^\dagger$.
Then we prove assertions (v)-(ix).

Finally, in Subsection \ref{SUBSECTION_embedding} we prove Theorem
\ref{Thm:0.2.3}. The proof of the existence of a one-dimensional representation
relies on an auxiliary result proved in Subsection \ref{SUBSECTION_mult_1}

\subsection{Notation and conventions}
Let $A$ be a vector space.  By the classical part of an element
$a\in A[[\hbar]], a=\sum_{i=0}^\infty a_i\hbar^i$ we mean $a_0$. By
the classical part of a subset $\I\subset A[[\hbar]]$ we mean the
subset of $A$ consisting of the classical parts of all elements of
$\I$. Finally, let $B$ be another vector space, and
$\Phi_\hbar:A[[\hbar]]\rightarrow B[[\hbar]]$ be a
$\K[[\hbar]]$-linear map. The linear map $\Phi:A\rightarrow B$
mapping $a\in A$ to the classical part of $\Phi_\hbar(a)$ is called
the classical part of $\Phi_\hbar$.

Below we present some notation used in the text.
 \setlongtables
\begin{longtable}{p{3cm} p{12cm}}
$\W_V$& the Weyl algebra of a symplectic vector space $V$.\\
$\Ann_\A(M)$& the annihilator of an $\A$-module $M$ in an  algebra
$\A$.\\ $\Dim_\A (M)$&
the Gelfand-Kirillov dimension of an $\A$-module $M$.\\
$\Goldie(\A)$& the Goldie rank of a prime Noetherian algebra $\A$.\\
$\gr \A$& the associated graded algebra of a filtered
algebra $\A$.\\
$\Id(\A)$& the set of all (two-sided) ideals of an algebra $\A$.\\
$\sqrt{\J}$& the radical of an ideal $\J$.\\
$\K[X]^\wedge_{Y}$& the completion of the algebra $\K[X]$ of regular
functions on $X$ w.r.t. a subvariety $Y\subset X$.\\
$L(\lambda)$& the irreducible finite dimensional $G$-module with
highest weight
$\lambda$.\\
$\Pr(\A)$& the set of all prime ideals of $\A$, whose intersection
with the
center of $\A$ are of codimension 1.\\
$R_\hbar(\A)$& the Rees algebra of a filtered algebra $\A$.
\\$U(\g)$& the universal enveloping algebra of a Lie algebra $\g$.
\\$\VA(\J)$& the associated variety of an ideal $\J$ in a filtered algebra.
\\
$V_\lambda$&$:=L(\lambda)\otimes \Hom^G(L(\lambda),V)$~-- the
isotypical component   of type $L(\lambda)$ in a $G$-module $V$.
\\ $V^\m,V^\I$& the annihilator of a Lie algebra $\m$ or a left ideal
$\I$ of an associative algebra in a module $V$.
\\ $X^G$& the fixed-point set for the action $G:X$.
\\
$\Centr(\g)$& the center of $U(\g)$.
\\
$\z_\g(x)$& the centralizer of $x$ in $\g$.\\
$Z_G(x)$& the centralizer of $x\in\g$ in $G$.
\\
$\Omega^1(X)$& the space of regular 1-forms on a smooth variety $X$.
\end{longtable}

{\bf Acknowledgements.} A part of this paper was written during my
stay at the Fourier Institute, Grenoble, in June 2007. I am grateful
to this institution and especially to M. Brion for
hospitality. I am also grateful to  D. Vogan for useful
discussions.  I would like to express my gratitude to M. Martino,
A. Premet and the referee for their comments on previous versions of this paper.
Finally, I want to thank Olaf Schn\"{u}rer and the referee for pointing out a gap
in the proof of Proposition 3.1.3.

\section{Deformation quantization}\label{SECTION_defquant}
\subsection{Star-products and their equivalence}
Recall that the base field $\K$ is assumed to be algebraically closed and of characteristic $0$.
All algebras, varieties, etc., are defined over $\K$.

Let $A$ be a commutative associative algebra with unit equipped with
a Poisson bracket.
 For example, one can take for $A$ the algebra $\K[X]$ of regular functions on  a smooth affine
symplectic variety (the symplectic form is assumed to be regular).

\begin{defi}\label{defi:1.1}
The map $*:A[[\hbar]]\otimes_{\K[[\hbar]]} A[[\hbar]]\rightarrow
A[[\hbar]]$ is called a {\it star-product} if it satisfies the
following conditions: \begin{itemize}
\item[(*1)] $*$ is $\K[[\hbar]]$-bilinear and continuous in the $\hbar$-adic
topology.
\item[(*2)] $*$ is associative, equivalently,  $(f*g)*h=f*(g*h)$ for all $f,g,h\in A$, and $1\in A$ is a unit for $*$.
\item[(*3)] $f*g-fg\in \hbar A[[\hbar]],f*g-g*f-\hbar\{f,g\}\in \hbar^2A[[\hbar]]$ for all $f,g\in A$.
\end{itemize}
\end{defi}
By (*1), a star-product is uniquely determined by its restriction to
$A$. One may write $f*g=\sum_{i=0}^\infty D_i(f,g)\hbar^i,f,g\in A,
D_i:A\otimes A\rightarrow A$. Condition (*3) is equivalent to
$D_0(f,g)=fg, D_1(f,g)-D_1(g,f)=\{f,g\}$. If all $D_i$ are
bidifferential operators, then the star-product $*$ is called {\it
differential}.

\begin{Ex}\label{Ex:1.1}
Let $X=V$ be a finite-dimensional vector space equipped with a
constant nondegenerate  bivector $P$ (i.e, nondegenerate element of $\bigwedge^2 V$).  The {\it Moyal-Weyl}
star-product on $\K[V][[\hbar]]$ is defined by
$$f*g=\exp(\frac{\hbar}{2}P)f(x)\otimes g(y)|_{x=y}.$$
Here $P$
acts  on $\K[V]\otimes \K[V]=S(V^*)\otimes S(V^*)$ by contraction. It is not very difficult to show that
this product is associative, see, for instance, \cite{BFFLS}.
\end{Ex}

When we consider $A[[\hbar]]$ as an algebra w.r.t. the star-product,
we call it a {\it quantum algebra}.

\begin{defi}\label{defi:1.2}
Let $*,*'$ be two star-products on $A$. These star-products are said
to be {\it equivalent} if there are linear maps $T_i:A\rightarrow A,
i\in\N,$ such that the operator $T=id+\sum_{i=1}^\infty
T_i\hbar^i:A[[\hbar]]\rightarrow A[[\hbar]]$ satisfies
$f*'g=T(T^{-1}(f)*T^{-1}(g))$ for all $f,g\in A$. Such $T$ is called
an equivalence between $*,*'$.
\end{defi}

Let  an algebraic group $G$ act on $A$ by automorphisms preserving
the Poisson bracket. We say that $*$ is $G$-invariant if all
operators $D_i$ are $G$-equivariant. Now let $\K^\times$ act on $A,
(t,a)\mapsto t.a$ by automorphisms such that
$t.\{\cdot,\cdot\}=t^{-k}\{\cdot,\cdot\}$. Consider the action
$\K^\times:A[[\hbar]]$ given by $t.\sum_{i=0}^\infty
a_j\hbar^j=\sum_{j=0}^\infty t^{jk}(t.a_j)\hbar^j$. If $*$ is
$\K^\times$-invariant, then we say that $*$ is {\it homogeneous}.
Clearly, $*$ is homogeneous iff the map $D_l:A\otimes A\rightarrow
A$ is homogeneous of degree $-kl$.

For instance,  the Moyal-Weyl star-product $*$ is invariant with
respect to $\Sp(V)$. Now let $\tau:\K^\times\rightarrow \Sp(V)$ be a
one-parameter subgroup. For $v\in V$ set $t.v=t^{-1}\tau(t)v$. Then
$*$ is homogeneous w.r.t.  this action of $\K^\times$ (for $k=2$).

We say that a star-product $*$ on $A[[\hbar]]$ is {\it polynomial},
if $A[\hbar]$ is a subalgebra of $A[[\hbar]]$, in other words, if
for any $f,g\in A$ there is $n\in \N$ such that $D_i(f,g)=0$ for all
$i>n$. We are going to obtain some sufficient condition for a
star-product to be polynomial. To this end, we introduce the
following definition.

\begin{defi}\label{defi:1.23}
Let $\A$ be an associative subalgebra with unit and $H$ an algebraic
group acting on $\A$ by automorphisms. By the {\it  $H$-finite part}
of $\A$ (denoted $\A_{H-fin}$) we mean the sum of all finite
dimensional $H$-submodules of $\A$.
\end{defi}

It is easy to check that $\A_{H-fin}$ is a subalgebra of $\A$.

\begin{Prop}\label{Prop:1.21}
Let $A$ be such as above and  $G$ be a reductive group. Suppose that
$A$ is finitely generated, and there is a rational action of $G\times \K^\times$  on $A$ (i.e.,
$A=A_{G\times\K^\times-fin}$) by
automorphisms, and  $A^G$ has no negative graded components
(w.r.t. the grading induced by $\K^\times$). Then
$A[\hbar]=\A[[\hbar]]_{G\times\K^\times-fin}$. In particular, any
homogeneous (with $k>0$, where $t.\hbar=t^k\hbar$) $G$-invariant
star-product on $A[[\hbar]]$ is polynomial.
\end{Prop}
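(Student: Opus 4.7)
The plan is to reduce the polynomiality claim to the main equality $A[\hbar] = A[[\hbar]]_{G\times\K^\times\text{-fin}}$. Indeed, for a homogeneous $G$-invariant star-product the multiplication $*$ is automatically $G\times\K^\times$-equivariant on $A[[\hbar]]$, so the $G\times\K^\times$-finite part is a $*$-subalgebra containing the generating subspace $A$ (here I use that $A = A_{G\times\K^\times\text{-fin}}$ sits inside the $\K^\times$-degree-$0$ part of $A[[\hbar]]$, hence is $G\times\K^\times$-finite). Thus $f*g \in A[\hbar]$ for all $f,g \in A$ once the equality is proved.

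To prove the equality, the inclusion $A[\hbar] \subseteq A[[\hbar]]_{G\times\K^\times\text{-fin}}$ is immediate: for $a \in A$, the orbit of $a\hbar^n$ is $((G\times\K^\times)\cdot a)\hbar^n$, which is finite-dimensional because $a$ is already $G\times\K^\times$-finite in $A$. For the reverse inclusion I will decompose by $G$-isotypical components $A = \bigoplus_\lambda V_\lambda \otimes M_\lambda$, with $M_\lambda := \Hom^G(V_\lambda, A)$, inheriting the $\K^\times$-grading from $A$. Since $G$ acts trivially on $\hbar$, the $G$-finite part of $A[[\hbar]]$ is the algebraic direct sum $\bigoplus_\lambda V_\lambda \otimes M_\lambda[[\hbar]]$, so any $G\times\K^\times$-finite element reduces to checking, in each single component, that a $\K^\times$-finite element of $M_\lambda[[\hbar]]$ is polynomial in $\hbar$.

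The structural input I need is that each $M_\lambda$ is bounded below in its $\K^\times$-grading. For this I identify $M_\lambda \cong (V_\lambda^* \otimes A)^G$ and invoke the classical result that, for a reductive group $G$ acting rationally on a finitely generated commutative algebra, the invariants of a finitely generated rational $G$-$A$-module form a finitely generated $A^G$-module. Since $A^G$ is concentrated in non-negative $\K^\times$-degrees by hypothesis, any finitely generated graded $A^G$-module is bounded below. Granted this, a $\K^\times$-finite $y \in M_\lambda[[\hbar]]$ is a finite sum of $\K^\times$-weight vectors, and a weight-$w$ vector has the form $\sum_n (m_n)_{w-nk}\hbar^n$; since $(M_\lambda)_d = 0$ for $d$ sufficiently negative and $k>0$, only finitely many $n$ satisfy $w-nk \geqslant d_0$, so $y$ is a polynomial in $\hbar$.

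The main obstacle is the finite generation of $M_\lambda$ over $A^G$; this is where reductivity of $G$ and finite generation of $A$ genuinely enter. Everything else is bookkeeping of the two commuting gradings, together with the observation that the two hypotheses (non-negativity of $A^G$ and strict positivity $k>0$ of the $\hbar$-weight) combine to truncate any $\K^\times$-finite power series in $\hbar$ to a polynomial. The polynomiality of the star-product then drops out by the subalgebra argument of the first paragraph.
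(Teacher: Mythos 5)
Your proof is correct and follows essentially the same route as the paper: both reduce to the $G$-isotypical decomposition, invoke the Popov--Vinberg result that each isotypical component is a finitely generated $A^G$-module, deduce that its $\K^\times$-grading is bounded below from the non-negativity of the grading on $A^G$, and conclude that $\K^\times$-finiteness forces truncation in $\hbar$. Your write-up merely spells out in more detail the bookkeeping (the identification of the $G$-finite part of $A[[\hbar]]$ with $\bigoplus_\lambda V_\lambda\otimes M_\lambda[[\hbar]]$ and the subalgebra argument for the ``in particular'' clause) that the paper leaves implicit.
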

\begin{proof}
Clearly, $A[\hbar]\subset \A[[\hbar]]_{G\times\K^\times-fin}$. Let us show the other inclusion.
Let $\lambda$ be a highest weight of $G$.  The isotypical component
$A_\lambda$ is a graded subspace of the $G$-module $A$. By \cite{VP}, Theorem 3.24,
$A_\lambda$ is a finitely generated $A^G$-module. It follows that
the grading on $A_\lambda$ is bounded from below. Therefore any
finite dimensional $G\times \K^\times$-submodule of $A[[\hbar]]$
lies in $A[\hbar]$.
\end{proof}

\begin{Rem}\label{Rem:1.1}
While working with a graded Poisson algebra
$A=\bigoplus_{i\in\Z}A_i$ with homogeneous Poisson bracket of degree
$-k, k\in\N,$ it is convenient to modify  the definitions of
star-products and their equivalences. Namely, by a {\it star-product of
degree $k$} we mean a map $*:A[[\hbar]]\rightarrow A[[\hbar]]$
satisfying (*1),(*2) and such that $f*g=\sum_{i=0}^\infty
D_{i}(f,g)\hbar^{ki}$, with $D_{0}(f,g)=fg, D_1(f,g)-D_1(g,f)=
\{f,g\}$ for $f,g\in A$. In the definition of a homogeneous
star-product of degree $k$ we set $t.\sum_{i=0}^\infty
a_j\hbar^j=\sum_{j=0}^\infty t^j(t.a_j)\hbar^j$. The definition of
an equivalence is modified trivially. Clearly, if $*$ is a
star-product on $A[[\hbar]]$, $f*g=\sum_{i=0}^\infty
D_i(f,g)\hbar^i, f,g\in A$, then the map
$*^k:A[[\hbar]]\otimes_{\K[[\hbar]]} A[[\hbar]]\rightarrow
A[[\hbar]]$ given by $f*^k g=\sum_{i=0}^\infty D_i(f,g)\hbar^{ki}$
is a star-product of degree $k$ and vice versa. So all constructions
and results concerning usual star-products are trivially generalized
to star-products of degree $k$.
\end{Rem}

\subsection{Fedosov quantization}\label{SUBSECTION_Fedosov}
The goal of this section is to review the Fedosov approach
(\cite{Fedosov1},\cite{Fedosov2}) to deformation quantization of
smooth affine symplectic varieties. Although Fedosov studied smooth
real manifolds, his approach works as well for smooth symplectic
varieties and smooth formal schemes. The  case of smooth symplectic varieties
was considered in \cite{Farkas},
the case of formal schemes is analogous.

Let $X$ be a smooth variety
with symplectic form $\omega$.
According to Fedosov, to construct a star-product one needs to fix a
symplectic connection on a variety in interest.
\begin{defi}\label{defi:1.3}
By a symplectic connection we mean  a torsion-free covariant
derivative $\nabla:TX\rightarrow TX\otimes\Omega^1(X)$ such that
$\nabla\omega=0$.
\end{defi}

\begin{Prop}\label{Prop:1.4}
Let $X$ be an affine symplectic variety, $G$  a reductive group
acting on $X$ by symplectomorphisms. Let $\K^\times$ act on $X$ by
$G$-equivariant automorphisms such that $t.\omega= t^k\omega$ for
some $k\in\Z$. Then there is a $G\times\K^\times$-invariant
symplectic connection $\nabla$ on $X$.
\end{Prop}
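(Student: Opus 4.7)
The plan is to first construct \emph{some} symplectic connection on $X$, then average it over the reductive group $H := G \times \K^\times$ to obtain an invariant one. For existence, since $X$ is smooth and affine, the module $\Gamma(TX)$ of regular vector fields is finitely generated and projective over $\K[X]$, hence a direct summand of some free $\K[X]^N$; pulling back the trivial connection on $\K[X]^N$ and symmetrizing yields a torsion-free connection $\nabla''$ on $TX$. The obstruction for $\nabla''$ to be symplectic is captured by the tensor $\nabla''\omega \in \Gamma(\Omega^1(X) \otimes \bigwedge^2 \Omega^1(X))$; using $\omega$ to identify $TX$ with $T^*X$, I would find a correction $A \in \Gamma(S^3\Omega^1(X))$ so that $\nabla := \nabla'' + A$ satisfies $\nabla\omega = 0$. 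The correction exists because the map from $\Gamma(S^3\Omega^1(X))$ into the obstruction space is surjective at every point and the modules in play are projective.

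For the averaging step, observe that $\K[X]$, $\Omega^1(X)$, and their tensor and symmetric powers are all rational $H$-modules; in particular so is $M := \Gamma(S^3 \Omega^1(X))$. Since $\K^\times$ scales $\omega$ by $t^k$ (and the condition $\nabla\omega = 0$ is unchanged by rescaling $\omega$) while $G$ preserves $\omega$, the group $H$ acts on the affine space $\mathcal{C}$ of symplectic connections by affine transformations whose linear part is the tautological $H$-action on the translation space $M$. Fix any $\nabla_0 \in \mathcal{C}$ and define $c : H \to M$ by $c(h) = h.\nabla_0 - \nabla_0$; this satisfies the cocycle identity $c(gh) = c(g) + g.c(h)$. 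Since $H$ is reductive, the category of rational $H$-modules is semisimple, so $H^1_{\mathrm{rat}}(H, M) = 0$; hence $c$ is a coboundary, $c(h) = v - h.v$ for some $v \in M$, and one checks directly that $\nabla := \nabla_0 + v$ is $H$-invariant.

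The main technical point I expect to be most delicate is the vanishing of $H^1_{\mathrm{rat}}(H, M)$ for the infinite-dimensional rational $H$-module $M$. This reduces to the finite-dimensional case (which is standard) by writing $M$ as a union of finite-dimensional rational $H$-submodules and observing that the image of the regular map $c$ is contained in the $H$-submodule generated by finitely many of its values, hence in a finite-dimensional rational $H$-submodule; complete reducibility on finite-dimensional rational $H$-modules then finishes the argument.
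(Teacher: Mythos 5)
Your proposal follows essentially the same two-step strategy as the paper: first produce some symplectic connection by correcting a torsion-free one (the paper uses local trivializations over principal open sets plus the explicit Fedosov correction and then glues with a partition of unity $\sum f_ig_i=1$, while you use projectivity of $H^0(X,TX)$ — both are fine), and then average over the reductive group $G\times\K^\times$; the paper's "take the $G\times\K^\times$-invariant component" is exactly the Reynolds-operator form of your $H^1_{\mathrm{rat}}(H,M)=0$ argument, and your reduction to finite-dimensional submodules via local finiteness is the right justification. One small imprecision in your first step: the correction making a torsion-free connection symplectic is generally \emph{not} totally symmetric — only the difference of two torsion-free \emph{symplectic} connections lies in $\Gamma(S^3\Omega^1(X))$ after lowering with $\omega$ — so you should either allow corrections $A$ with $\omega(A(\cdot,\cdot),\cdot)$ symmetric only in the first two slots, or simply invoke the explicit formula
$\omega(\nabla_{\xi}\eta,\zeta)=\omega(\nabla''_{\xi}\eta,\zeta)+\tfrac{1}{3}\bigl((\nabla''_{\zeta}\omega)(\xi,\eta)+(\nabla''_{\eta}\omega)(\xi,\zeta)\bigr)$ that the paper quotes from Fedosov.
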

\begin{proof}
At first, let us prove that there is some symplectic connection on
$X$.
 Choose an open covering $X=\cup_{i}X_i, i=1,\ldots,k,$ such that
$TX_i$ is the trivial bundle and $X_i$ is the principal open subset. The latter
means that there are $g_i\in \K[X]$ s.t. $X_i=\{x\in X| g_i(x)\neq 0\}$. There is a torsion free connection
$\widetilde{\nabla}^i$ on $X_i$. There is a canonical way to assign a symplectic connection
$\nabla^i$ to $\widetilde{\nabla}^i$, see, for instance  \cite{Fedosov2},
Proposition 2.5.2. Namely, $\nabla^i$ is given by the following equality

$$\omega(\nabla^i_{\xi}\eta,\zeta)=\omega(\widetilde{\nabla}^i_{\xi}\eta,\zeta)+\frac{1}{3}\left((\widetilde{\nabla}^i_{\zeta}\omega)(\xi,\eta)+
(\widetilde{\nabla}^i_{\eta}\omega)(\xi,\zeta)\right),\xi,\eta,\zeta\in H^0(X_i,TX).$$

Replacing $g_i$ with $g_i^N$ for sufficiently large $N$, we obtain that
$g_i\nabla_i$ maps $H^0(X,TX)\otimes H^0(X,TX)$ to $H^0(X,TX)$.
There are functions $f_i\in\K[X],i=1,\ldots,k,$ such that
$\sum_{i=1}^k f_ig_i=1$. Then $\sum_{i=1}^k f_ig_i\nabla_i$ is a
symplectic connection on $X$. The $G\times\K^\times$-invariant
component $\nabla_0$ of the map $\nabla:H^0(X,TX)\times
H^0(X,TX)\rightarrow H^0(X,TX)$ is  again a  symplectic connection.
\end{proof}

Fedosov constructed a differential star-product on $\K[X]$ starting
with a symplectic connection $\nabla$, see \cite{Fedosov2}, Section
5.2. We remark that all intermediate objects occurring in Fedosov's
construction are obtained from some regular objects (such as
$\omega, \nabla$ or the curvature tensor of $\nabla$) by a recursive
procedure and so are regular too. If a reductive group $G$ acts on
$X$ by symplectomorphisms (resp., $\K^\times$ acts on $X$ such that
$t.\omega=t^k\omega$) and $\nabla$ is $G$-invariant (resp., $\K^\times$-invariant),
then $*$ is $G$-invariant, resp.,
homogeneous. Note also that if a Fedosov star-product is given by
$f*g=\sum_{i=0}^\infty D_i(f,g)\hbar^i$, then $D_i$ is a
bidifferential operator of order at most $i$ in each variable, see,
for example, \cite{Xu}, Theorem 4.2.


\begin{Ex}\label{Ex:1.2}
Let $V$ be a symplectic vector space with constant symplectic form
$\omega$. Then the Moyal-Weyl star-product is obtained by applying
the Fedosov construction to the trivial connection $\nabla$, see Example in Subsection 5.2
of \cite{Fedosov2}.
\end{Ex}


\begin{Ex}\label{Ex:1.4}
Let $G$ be a reductive group. Choose a symplectic connection
$\nabla$ on $T^*G$ invariant w.r.t. $G\times G\times \K^\times$ ($\K^\times$
latter acts by fiberwise dilations).  So we get the $G\times
G$-invariant homogeneous star product $*$ on $\K[T^*G][[\hbar]]$.
Restricting this star-product to the space of $G$-invariants (for,
say, the left action), we get the homogeneous $G$-invariant
star-product on $\K[\g^*][[\hbar]]=S(\g)[[\hbar]]$.
\end{Ex}

The following proposition was proved in \cite{Fedosov2}, Theorem
5.5.3, for smooth manifolds. Again, its proof can directly be  transferred to
the algebraic setting.

\begin{Prop}\label{Prop:1.5}
Let $G$ be a reductive group acting on $X$ by symplectomorphisms.
Let $\nabla,\nabla'$ be $G$-invariant symplectic connections on $X$.
Let $*,*'$ be the Fedosov star-products constructed from
$\nabla,\nabla'$. Then $*,*'$ are $G$-equivariantly equivalent and
this equivalence is differential (in the sense that all $T_i$ of
Definition \ref{defi:1.2} are differential operators). If,
additionally, $\K^\times$ acts on $X$ by $G$-equivariant
automorphisms such that $t.\omega=t^k\omega$ for some $k\in \Z$ and
$\nabla, \nabla'$ are $\K^\times$-invariant, then there is a $G\times\K^\times$-equivariant
differential equivalence.
\end{Prop}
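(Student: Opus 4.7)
My plan is to follow Fedosov's original homotopy argument, adapted to the algebraic setting and with careful bookkeeping of $G$- and $\K^\times$-equivariance at every stage. First I observe that the space of torsion-free symplectic connections on $X$ is an affine subspace of the space of all connections: the difference of two such connections is a section of a certain tensor bundle. Hence $\nabla_s := (1-s)\nabla + s\nabla'$ is a symplectic connection for every $s\in\K$, and the whole family is $G$-invariant (respectively $G\times\K^\times$-invariant) whenever $\nabla,\nabla'$ are.

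Next, I would run the Fedosov construction for the family $\{\nabla_s\}$ in parallel, obtaining star-products $*_s$ on $\K[X][[\hbar]]$ that depend polynomially on $s$ at each order in $\hbar$, with $*_0 = *$ and $*_1 = *'$. All intermediate objects — the section $r_s$ solving Fedosov's fixed-point equation, the flat Fedosov connection $D_s$ on the formal Weyl algebra bundle, and the projection from flat sections to $\K[X][[\hbar]]$ — are built recursively from $\omega$, the curvature of $\nabla_s$, and $\nabla_s$ itself, so they inherit $G$-invariance (and $\K^\times$-equivariance in the homogeneous setting) at every step of the recursion. The equivalence is then produced by the standard argument: one constructs recursively a Weyl-algebra-valued $1$-form $H_s$ with $\tfrac{d}{ds}D_s = [D_s, H_s]$, and solves the ODE $\tfrac{d}{ds}\sigma_s(a) = -H_s\cdot \sigma_s(a)$ on flat sections with $\sigma_0 = \operatorname{id}$. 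Transporting $\sigma_1$ through the identification of flat sections with $\K[X][[\hbar]]$ yields an operator $T$ with $T(f*g) = T(f)*' T(g)$. Since $H_s$ is built from the same $G$-invariant ingredients as $D_s$, the operator $T$ is automatically $G$-equivariant, and the differential nature of each $T_i$ follows from the fact that Fedosov's projection and the fiberwise Weyl action of $H_s$ are both bidifferential in each order.

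Two points will require attention. First, one needs to give meaning to the ``$s$-family'' and to the ODE in the algebraic rather than smooth setting: at each order in $\hbar$ the recursion reduces to polynomial manipulations, so one can work throughout over $\K[s]$, and the ODE becomes a polynomial ODE in $s$ at every $\hbar$-order, which is integrable term-by-term. Second, in the homogeneous case one must verify that the grading with $t\cdot\hbar = t^k\hbar$ is preserved by every step of Fedosov's recursion; this is a direct consequence of the $\K^\times$-equivariance of $\omega$ and $\nabla_s$ combined with the fact that Fedosov's fixed-point equation is homogeneous of the appropriate degree.

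The main obstacle, then, is not conceptual but the careful verification that Fedosov's original arguments transfer verbatim from the $C^\infty$-setting to our affine algebraic one — essentially checking that every step can be carried out in $\K[X][s][[\hbar]]$ rather than in spaces of smooth sections. Once this bookkeeping is in place, the $G$-equivariance (resp.\ $G\times\K^\times$-equivariance) and the differentiability of the resulting equivalence follow directly from the recursive formulas.
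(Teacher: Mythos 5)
Your proposal is essentially the paper's own argument: the paper proves Proposition \ref{Prop:1.5} simply by citing Fedosov's Theorem 5.5.3 and asserting that his proof transfers directly to the algebraic setting, and your outline (affine interpolation $\nabla_s$, the parallel Fedosov recursion over $\K[s]$, the homotopy $H_s$ with $\tfrac{d}{ds}D_s=[D_s,H_s]$, and equivariance/differentiability by naturality of each recursive step) is exactly that transfer, spelled out. This is correct and matches the intended proof.
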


\begin{Rem}\label{Rem:1.51}
The assertion of the previous proposition holds also when $X$ is the
completion of a smooth affine variety w.r.t. a smooth subvariety.
Again,  Fedosov's proof works in this situation.
\end{Rem}

\section{Application to $W$-algebras}\label{SECTION_Walg}
Throughout this section $G$ is a semisimple algebraic group of
adjoint type and $\g$ its  Lie algebra. Let
$e,h',h,f,\chi,\y\subset\g(-1),\m:=\m_\y,\n:=\n_\y,\m',N,V$ have the
same meaning as in Subsection \ref{SUBSECTION_Walg}.  We suppose
that $\y$ is lagrangian. In this case $\m=\n$ and $\underline{\m}$
is a lagrangian subspace of $V$. By $S$ we denote the Slodowy slice
$\chi+(\g/[\g,f])^*$.

\subsection{The algebras
$\widetilde{\Walg},\Walg$}\label{SUBSECTION_isomorphism}
We identify the cotangent bundle $T^*G$ of $G$ with $G\times\g^*$ by using left-invariant
sections of $T^*G$. The actions of $G$ on $T^*G$ by left (resp., right) translations are
written as $g.(g_1,\alpha)=(gg_1,\alpha)$ (resp. $g.(g_1,\alpha)=(g_1g^{-1},g.\alpha)$).

Consider the one-parameter subgroup $\gamma$ of $G$ with
$\frac{d\gamma}{dt}|_{t=0}=h'$. Define the action of $\K^\times$ on
$T^*G\cong G\times\g^*$ by $t.(g,\alpha)=(g\gamma(t)^{-1},t^{-2}\gamma(t)\alpha)$.
Let $\omega$ denote the symplectic form of $T^*G$. It is clear that
$t.\omega=t^2\omega$.

Consider the centralizer $Q:=Z_G(e,h,f)$ of $e,h,f$ in $G$. Being the centralizer
of a reductive subalgebra, $Q$ is a reductive group. Set $G_0:=Q\cap Z_G(h')=Z_Q(h_0)$, where $h_0:=h'-h$.
Since $h_0$ is a semisimple element of $\mathfrak{q}$, we see that $G_0$ is reductive.
Consider the action of $G_0$ on $T^*G$ by right translations. This actions preserves $\omega$
and commutes with the $G$- and $\K^\times$-actions.

Set
$X:=G\times S\hookrightarrow G\times\g^*\cong T^*G$. Restricting the
symplectic form from $T^*G$, we get the  2-form on $X$ denoted by
$\omega'$. Choose $x\in S\hookrightarrow X$. The tangent space
$T_xX$ is identified with $\g\oplus (\g/[\g,f])^*$. Then
$\omega'_x(\xi+u,\eta+v)=\langle x,[\xi,\eta]\rangle+\langle
u,\eta\rangle-\langle\xi,v\rangle,\xi,\eta\in\g,u,v\in
(\g/[\g,f])^*$. By \cite{slice}, Lemma 2, $\omega'$ is nondegenerate.
The symplectic $G$-variety $X$ is a special case of {\it model varieties} introduced in \cite{slice}.

The orbit $G(1,\chi)$ and the subvariety $X\subset T^*G$ are
$G_0\times\K^\times$-stable. Let us choose a $G\times G_0\times\K^\times$-equivariant
symplectic connection $\nabla$ on $X$ (existing by Proposition
\ref{Prop:1.4} because $G\times G_0$ is reductive). Construct the star-product $f*g=\sum_{i=0}^\infty
D_i(f,g)\hbar^{2i}$ of degree 2 by means of $\nabla$. The grading on
$\K[S]$ induced by the action of $\K^\times$ is positive, for all
eigenvalues of $\ad(h')$ on $\z_\g(e)$ are nonnegative.  By
Proposition \ref{Prop:1.21}, $\K[X][\hbar]$ is a graded quantum
subalgebra of $\K[X][[\hbar]]$.

\begin{defi}\label{defi:3.0.0}
Set
$\widetilde{\Walg}:=\K[X][\hbar]/(\hbar-1)\K[X][\hbar]$. The $G$-algebra $\widetilde{\Walg}$
is said to be an {\it equivariant W-algebra}. Also set $\Walg:=\widetilde{\Walg}^G$.
\end{defi}

The algebra
$\widetilde{\Walg}$ is naturally identified with $\K[X]$ so that the
product in $\widetilde{\Walg}$ is given by $f\circ g=\sum_{i=0}^\infty
D_i(f,g)$. The grading on $\K[X][\hbar]$ gives rise to the {\it
Kazhdan} filtration on $\widetilde{\Walg}$. W.r.t. this filtration, we have the equalities
$\gr\widetilde{\Walg}=\K[X],\gr\Walg=\K[S]$ of Poisson algebras. Proposition \ref{Prop:1.5} implies
that the filtered  algebras $\widetilde{\Walg},\Walg$ do not depend up to
an isomorphism on the choice of  $\nabla$. Also note that $G_0$ acts on $\Walg,\widetilde{\Walg}$
preserving the filtrations.

\begin{Prop}\label{Cor:3.5}
The $G$-algebra structure on $\widetilde{\Walg}$ does not depend  on the
choice of $h'$.
\end{Prop}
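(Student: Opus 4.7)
The plan is to show that a single Fedosov star-product can be used to define $\widetilde{\Walg}_{h'}$ for every admissible $h'$ simultaneously, making the $h'$-independence of the $G$-algebra structure essentially tautological. The key observation is that for any two admissible choices $h'_1, h'_2$, the differences $h_{0,i} := h'_i - h$ lie in $\mathfrak{q} := \z_\g(e,h,f)$ (the Lie algebra of $Q$) and commute with $h$, so the algebraic one-parameter subgroups factor as $\gamma_{h'_i}(t) = \gamma_h(t)\tau_i(t)$, where $\tau_i : \K^\times \to Q$ is the one-parameter subgroup of $h_{0,i}$. A short calculation starting from $t.(g, \alpha) = (g\gamma_{h'_i}(t)^{-1}, t^{-2}\gamma_{h'_i}(t)\alpha)$ then shows that the $\K^\times_{h'_i}$-action on $X = G \times S$ is exactly the composition of the $\K^\times_h$-action and the right translation action of $\tau_i(t) \in Q$ on $T^*G$.

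Since $Q$ centralizes $h$, the $\K^\times_h$-action on $X$ commutes with both the left $G$-action and the right $Q$-action. Applying Proposition \ref{Prop:1.4} to the reductive group $G \times Q \times \K^\times_h$ therefore produces a symplectic connection $\nabla$ on $X$ invariant under this joint action. By the factorization above and the inclusion $G_0^i = Z_Q(h_{0,i}) \subset Q$, this single $\nabla$ is automatically $G \times G_0^i \times \K^\times_{h'_i}$-invariant for every admissible $h'_i$, so the Fedosov star-product $*_\nabla$ is $G \times G_0^i \times \K^\times_{h'_i}$-equivariant for each $h'_i$. Proposition \ref{Prop:1.21} (whose positivity hypothesis on $\K[S] = \K[X]^G$ holds for each $h'_i$ by the eigenvalue assumption on $\ad(h'_i)|_{\z_\g(e)}$) then implies that $\K[X][\hbar] \subset \K[X][[\hbar]]$ is a subalgebra under $*_\nabla$. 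Consequently $\widetilde{\Walg}_{h'_i, \nabla} := (\K[X][\hbar], *_\nabla)/(\hbar - 1)$ is defined for every $h'_i$, and its multiplication $f \circ g = \sum_k D_k(f, g)$ together with its $G$-action depend only on the fixed $*_\nabla$, not on $h'_i$; this is literally the same $G$-algebra for every admissible $h'_i$.

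Finally, to identify this $G$-algebra with the W-algebra $\widetilde{\Walg}_{h'_i, \nabla_i}$ built from an arbitrary $G \times G_0^i \times \K^\times_{h'_i}$-invariant connection $\nabla_i$, I would invoke Proposition \ref{Prop:1.5}, which supplies a $G \times \K^\times_{h'_i}$-equivariant differential equivalence between $*_{\nabla_i}$ and $*_\nabla$. Such an equivalence preserves the $G \times \K^\times_{h'_i}$-finite part of $\K[X][[\hbar]]$, which by Proposition \ref{Prop:1.21} equals $\K[X][\hbar]$ for both star-products, and so descends to a $G$-equivariant algebra isomorphism $\widetilde{\Walg}_{h'_i, \nabla_i} \cong \widetilde{\Walg}_{h'_i, \nabla}$. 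The main obstacle is the structural observation that the various $\K^\times_{h'}$-actions are not independent symmetries but fit together inside the single reductive group $G \times Q \times \K^\times_h$; once this is seen, the connection, star-product, and polynomial subalgebra can all be selected uniformly in $h'$, turning what a priori is a comparison between inequivalent constructions into an equality.
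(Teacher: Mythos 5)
Your proof is correct and takes essentially the same route as the paper: both rest on the observation that the $\K^\times_{h'}$-action differs from the $\K^\times_{h}$-action by right translation along a one-parameter subgroup of $Q$, so that a single suitably invariant connection makes the Fedosov star-product homogeneous for all admissible gradings at once, with Proposition \ref{Prop:1.5} disposing of the residual dependence on the connection. The only cosmetic difference is the direction of comparison — the paper notes that the already-chosen $G\times G_0\times\K^\times_{h'}$-invariant $\nabla$ is automatically $\K^\times_h$-invariant, whereas you choose one $G\times Q\times\K^\times_h$-invariant $\nabla$ that works for every $h'$ simultaneously.
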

\begin{proof}
Recall that the connection $\nabla$ is $G_0\times\K^\times$-invariant. Therefore it is invariant
w.r.t. the actions of $\K^\times$ associated to both $h$ and $h'$. So
$*$ is homogeneous w.r.t. both gradings on $\K[X]$.
\end{proof}

\begin{Prop}\label{Prop:3.11}
The algebra $\widetilde{\Walg}$ is simple.
\end{Prop}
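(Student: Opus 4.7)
The plan is to deduce simplicity of $\widetilde{\Walg}$ from Poisson simplicity of $\K[X]$, using the $G$-equivariance of the quantization to bypass the fact that the Kazhdan filtration on $\widetilde{\Walg}$ (unlike that on $\Walg$) is not bounded below. The first step is to show that every two-sided ideal $\I\subset\widetilde{\Walg}$ is automatically $G$-stable. Because the Fedosov star product was constructed from a $G$-equivariant symplectic connection $\nabla$, the construction provides a quantum moment map $\mu^*:\g\rightarrow\widetilde{\Walg}$ quantizing the classical moment map $X\rightarrow\g^*$, $(g,s)\mapsto\Ad^*(g)s$, and satisfying $[\mu^*(\xi),a]=\xi.a$ for all $\xi\in\g$ and $a\in\widetilde{\Walg}$. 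Hence $\xi.\I=[\mu^*(\xi),\I]\subset\I$, so $\I$ is $\g$-stable, and since $G$ is connected and acts rationally on $\widetilde{\Walg}$ it is $G$-stable.

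Next I would establish that $\K[X]$ has no proper nonzero Poisson ideals. Since $X$ is smooth and symplectic, $\omega'$ gives a $\K[X]$-linear identification of the module of vector fields with $\Omega^1(X)$, and the latter is generated as a $\K[X]$-module by exact differentials $df$. Therefore the module of vector fields is generated by Hamiltonian vector fields $X_f$, so any Poisson ideal $\J\subset\K[X]$ is stable under every $\K$-derivation of $\K[X]$. If $V(\J)\neq\emptyset$ and $x\in V(\J)$, then for $f\in\J$ and derivations $\xi_1,\ldots,\xi_n$ the iterated derivative $(\xi_1\cdots\xi_n f)(x)$ vanishes; as derivations of $\K[X]$ span $T_xX$ at the smooth point $x$, $f$ vanishes to all orders at $x$. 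Combined with Krull's intersection theorem and the injectivity of $\K[X]\hookrightarrow\widehat{\O}_{X,x}$ (valid because $X=G\times S$ is irreducible), this forces $f=0$, so $\J=\K[X]$.

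To conclude, let $\I\neq 0$ be a two-sided ideal of $\widetilde{\Walg}$. Its associated graded $\gr\I\subset\gr\widetilde{\Walg}=\K[X]$ is a nonzero Poisson ideal -- since the commutator on $\widetilde{\Walg}$ drops Kazhdan degree by $2$ and induces the Poisson bracket from $\omega'$ -- so by the previous step $\gr\I=\K[X]$. By the first step $\I$ is $G$-stable, and exactness of $G$-invariants for reductive $G$ yields $\gr(\I^G)=(\gr\I)^G=\K[X]^G=\K[S]=\gr\Walg$. The Kazhdan filtration on $\Walg$ is bounded below (since $\gr\Walg=\K[S]$ is nonnegatively graded, with $\K[S]_0=\K$), so $\gr(\I^G)=\gr\Walg$ forces $\I^G=\Walg$. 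In particular $1\in\I^G\subset\I$, whence $\I=\widetilde{\Walg}$.

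The hardest ingredient will be producing the quantum moment map $\mu^*$: although this is a standard output of $G$-equivariant Fedosov quantization, verifying that the recursive Fedosov procedure yields a $\g$-linear map to $\widetilde{\Walg}$ whose adjoint action reproduces the infinitesimal $G$-action on $\widetilde{\Walg}$ is the main technical input. Once this inner realization of the $\g$-action is in hand, the rest is a combination of classical Poisson geometry with the standard filtered-to-graded principle applied to $\Walg$, where the filtration is bounded below.
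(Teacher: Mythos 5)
Your proof is correct and follows essentially the same route as the paper's: establish $G$-stability of the ideal via the inner (quantum moment map) realization of the $\g$-action, invoke Poisson simplicity of $\K[X]$ to get $\gr\I=\K[X]$, and pass to $G$-invariants where the nonnegativity of the Kazhdan grading on $\K[S]$ forces $1\in\I\cap\Walg$. The only difference is that you spell out the proof that a smooth affine symplectic variety has no proper nonzero Poisson ideals, which the paper simply asserts.
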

\begin{proof}
Let $\I$ be a proper ideal of $\widetilde{\Walg}$. Then $\gr\I$ is a
Poisson ideal of $\K[X]$. The ideal
$\I\subset\widetilde{\Walg}$ is $G$-stable, for the derivation of $\widetilde{\Walg}$ induced by
any $\xi\in\g$ is inner. One can see this from the alternative description of $\widetilde{\Walg}$
given in Remark \ref{Rem:3.12} or from Subsections 2.1, 2.2 in \cite{HC}. Since $\I$ is $G$-stable, we see
that $\gr(\I\cap\Walg)=\gr\I\cap \K[S]$. But $X$ is symplectic hence $\K[X]$ has no proper Poisson ideals.
It follows that $\gr\I=\K[X]$. The grading on
$\K[S]$ is nonnegative and $\gr\I$ contains $1\in \K[S]$. So we see that $1\in\I\cap \Walg$.
Contradiction.
\end{proof}

\begin{Rem}\label{Rem:3.12}
There is an alternative description of $\widetilde{\Walg}$. Since the only place we need
this description is the proof of Proposition \ref{Prop:3.11} (and even there an alternative
argument can be used), we do not give all the details.  Namely,
let $\xi^r_*$ denote the velocity vector field  associated with $\xi\in\g$
for the action of $G$ on $G$ by right translations (i.e., $\xi^r_*$ is the image of $\xi$ in the Lie algebra
of vector fields on $G$ under the homomorphism induced by the action). Set
$$\D(G,e)=\left(\D(G)/(\xi^r_*-\langle\chi,\xi\rangle,\xi\in\m)\right)^N.$$
This is an associative $G$-algebra equipped with a
filtration induced from the following filtration $\operatorname{F}_k\D(G)$
on $\D(G)$: $\operatorname{F}_k\D(G)=\sum_{i+2j\leqslant
k} (\F^{ord}_j \D(G)\cap \D(G)(i)).$ Here $\F^{ord}$ stands for the filtration
of $\D(G)$ by the order of a differential operator and $\D(G)(i)$ is the eigenspace
of $\ad(h'^r_*)$ corresponding to $i$. Note also that the $G$-action on $\D(G)$ by left
translations descends to $\D(G,e)$. Finally, the map $\xi\mapsto \xi_*^l$ descends to a map
$\g\rightarrow \D(G,e)$. Here $\xi^l_*$ is the velocity vector field for the
action of $G$ on itself by left translations.

Then $\widetilde{\Walg}$ and $\D(G,e)$ are isomorphic as
filtered associative $G$-algebras. To prove this  one needs, at
first, to check that $\gr\D(G,e)=\K[X]$. To this end one uses
techniques analogous to those developed in \cite{GG}, see also \cite{Ginzburg_HC}. Since
the second De Rham cohomology of $X$ is trivial,   the
existence of a $G$-equivariant isomorphism $\widetilde{\Walg}\cong \D(G,e)$ is derived from standard results on
equivalence of star-products, see \cite{Kaledin_Bezrukavnikov} or \cite{GR}, for example.
We will not need an isomorphism
$\widetilde{\Walg}\cong \D(G,e)$ in the sequel. This isomorphism yields  a $G$-equivariant linear
map $\g\rightarrow \widetilde{\Walg},\xi\mapsto \widehat{H}_\xi,$
such that $\xi.f=[\widehat{H}_\xi,f]$ for any $f\in
\widetilde{\Walg}$. Existence of a map $\g\rightarrow \widetilde{\Walg}$ with these properties
can be also easily derived, see  \cite{HC}, from the standard results on the quantization of moment maps.
\end{Rem}

\subsection{Completions}\label{SUBSECTION_completion}
First of all, let us recall the notion of the Rees algebra.

Let $\A$ be an associative algebra with unit equipped with an
increasing filtration $\F_i\A, i\in\Z,$ such that
$\cup_{i\in\Z}\F_i\A=\A, \cap_{i\in\Z}\F_i\A=\{0\}, 1\in \A_0$. Set
$R_\hbar(\A)=\bigoplus_{i\in \Z} \hbar^i \F_i\A$. This is a
subalgebra in $\A[\hbar,\hbar^{-1}]$. We call $R_\hbar(\A)$ the {\it
Rees algebra} associated with $\A$. The group $\K^\times$ acts on
$R_\hbar(\A)$  by $t.\sum a_i\hbar^i=\sum t^{i}a_i\hbar^i$ for $a_i\in \F_i\A$.  Note
also that there are natural isomorphisms
$R_\hbar(\A)/(\hbar-1)R_\hbar(\A)\cong \A, R_\hbar(A)/\hbar
R_\hbar(\A)\cong \gr(\A)$, see \cite{CG}, Corollary 3.2.8 for proofs.

Let us establish a relation between the sets of two-sided ideals of
the algebras $\A, R_\hbar(\A)$. For $\I\in \Id(\A)$ set
$R_\hbar(\I):=\bigoplus_{i\in \Z}(\I\cap \F_i\A)\hbar^i$.

\begin{defi}\label{defi:1.4}
An ideal $I$ in a $\K[\hbar]$-algebra $B$ is called $\hbar$-{\it\!\!
saturated}  if $I=\hbar^{-1}I\cap B$.
\end{defi}

The proof of the following proposition is straightforward.

\begin{Prop}\label{Prop:1.6}
The map $\I\mapsto R_\hbar(\I)$ is a bijection between $\Id(\A)$ and
the set of all $\K^\times$-stable $\hbar$-saturated  ideals of
$R_\hbar(\A)$. The inverse map is given by $\I_\hbar\mapsto
p(\I_\hbar)$, where $p $ stands for the projection
$R_\hbar(\A)\rightarrow R_\hbar(\A)/(\hbar-1)R_\hbar(\A)\cong \A$.
\end{Prop}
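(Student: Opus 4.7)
The plan is to check the two maps are mutually inverse and well-defined. Everything comes down to understanding what $\K^\times$-stable $\hbar$-saturated ideals of $R_\hbar(\A)$ look like concretely.

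First I would check that $R_\hbar(\I)$ really is a $\K^\times$-stable $\hbar$-saturated ideal of $R_\hbar(\A)$. The $\K^\times$-stability is immediate from the definition, since $R_\hbar(\I)$ is a direct sum of homogeneous components. The ideal property follows because $\F_i\A \cdot \F_j\A \subset \F_{i+j}\A$ and $\I$ is an ideal of $\A$. For $\hbar$-saturation: if $a \in \F_{i+1}\A$ and $\hbar \cdot a\hbar^i = a\hbar^{i+1} \in R_\hbar(\I)$ with $a\hbar^i \in R_\hbar(\A)$ (i.e.\ $a \in \F_i\A$), then $a \in \I \cap \F_i\A$, so $a\hbar^i \in R_\hbar(\I)$.

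Next I would analyze a general $\K^\times$-stable $\hbar$-saturated ideal $\I_\hbar \subset R_\hbar(\A)$. The $\K^\times$-stability forces a decomposition $\I_\hbar = \bigoplus_i \hbar^i \I_i$ with $\I_i \subset \F_i\A$. Multiplication by $\hbar \in R_\hbar(\A)_0$ on the left then yields $\I_i \subset \I_{i+1}$, and the $\hbar$-saturation condition $\hbar^{-1}\I_\hbar \cap R_\hbar(\A) = \I_\hbar$ translates directly into $\I_{i+1} \cap \F_i\A = \I_i$ for every $i$. This is the key structural observation: the $\I_i$ form an increasing chain compatible with the filtration.

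I would then set $\I := \bigcup_i \I_i = p(\I_\hbar) \subset \A$. Using the multiplicative structure of $R_\hbar(\A)$ (namely that $\F_i\A$ lives as $\hbar^i \F_i\A$ and the product in $R_\hbar(\A)$ restricts to the product in $\A$ after $p$), one verifies that $\I$ is a two-sided ideal of $\A$: for $a \in \I_i$ and $b \in \F_j\A$ the product $ab \in \F_{i+j}\A$ sits in $\I_{i+j}$ because $(a\hbar^i)(b\hbar^j) = (ab)\hbar^{i+j} \in \I_\hbar$.

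Finally I would check the two compositions. For $\I \in \Id(\A)$, clearly $p(R_\hbar(\I)) = \bigcup_i (\I \cap \F_i\A) = \I$. Conversely, for $\I_\hbar$ as above with associated $\I = \bigcup \I_j$, the inclusion $\I_i \subset \I \cap \F_i\A$ is obvious; the reverse inclusion is the only slightly delicate step and uses the saturation condition: if $a \in \I \cap \F_i\A$ lies in some $\I_j$ with $j > i$, then iterating $\I_{k+1} \cap \F_k\A = \I_k$ down from $k=j-1$ to $k=i$ shows $a \in \I_i$. Thus $R_\hbar(\I) = \I_\hbar$, completing the bijection. The only point that requires any care is this downward induction using $\hbar$-saturation, but it is routine.
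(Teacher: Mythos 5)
Your proof is correct, and the paper offers no proof to compare against (it simply declares the proposition straightforward); your argument — decomposing a $\K^\times$-stable ideal into graded pieces $\hbar^i\I_i$, translating $\hbar$-saturation into $\I_{i+1}\cap\F_i\A=\I_i$, and running the downward induction — is exactly the intended routine verification. One cosmetic slip: $\hbar=\hbar\cdot 1$ is a degree-$1$ element of $R_\hbar(\A)$, not degree $0$, but this does not affect the deduction $\I_i\subset\I_{i+1}$.
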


Let $\vf$ be a finite dimensional graded vector space,
$\vf=\bigoplus_{i\in \Z} \vf(i)$, $\vf\neq\vf(0)$, and $A:=S(\vf)$.
The grading on $\vf$ gives rise to the grading $A=\bigoplus_{i\in
\Z}A(i)$ hence to an action $\K^\times:A$. Let $*$ be a polynomial
star-product of degree $2$ on $A[[\hbar]], f*g=\sum_{i=0}^\infty
D_i(f,g)\hbar^{2i}$. Suppose $D_i:A\otimes A\rightarrow A$ is a
bidifferential operator  of order at most $i$ at each variable. Then
we can form the associative product $\circ:A\times A\rightarrow A,
f\circ g=\sum_{i=0}^\infty D_i(f,g)$. We denote $A$ equipped with
the corresponding algebra structure by $\A$. The algebra $\A$ is
naturally filtered by the subspaces $\F_j\A:=\bigoplus_{i\leqslant
j}A(i)$ and $\gr\A\cong A$.

Clearly, the algebra $\Walg$ introduced in the previous section has
the form $\A$ for $A=\K[S]$. Here are two other examples.


\begin{Ex}[Weyl algebra]\label{Ex:3.62}
Let $\vf$ be a symplectic vector space with symplectic form $\omega$ and  a grading  induced by
a linear action $\K^\times:\vf, t.v=t^{-1}\theta(t)v, \theta:\K^\times \rightarrow \Sp(\vf)$ so that $t.\omega=t^2\omega$.
Let $*$ be the Moyal-Weyl star-product  on $A[\hbar]$.  Then the
embedding $\vf\hookrightarrow\A$ induces the isomorphism
$\W_\vf\rightarrow \A$. Indeed, from the explicit formula for the star-product
given in Example \ref{Ex:1.1} we see that $u\circ v-v\circ u=\omega(u,v)$, where $\omega$ denotes
the symplectic form on $\vf$. So the inclusion $\vf\hookrightarrow \A$ does extend to an algebra homomorphism
$\W_{\vf}\rightarrow \A$. This homomorphism is surjective, since $\vf$ generates $\A$, and is injective,
since $\W_{\vf}$ is simple.
\end{Ex}

\begin{Ex}[The universal enveloping]\label{Ex:3.63}
Let us equip $\K[\g^*][[\hbar]]$ with a star-product $*$ established
in Example \ref{Ex:1.4}. Set $\vf:=\g$ and define the grading on
$\vf$ by $\vf(2)=\vf$. The star-product $*$ is polynomial. The
embedding $\g\hookrightarrow \A$ induces an homomorphism
$\U\rightarrow \A$. To show this we need to check that
$\xi\circ\eta-\eta\circ\xi=[\xi,\eta]$ for any $\xi,\eta\in\g$,
where the r.h.s. is the commutator in $\g$. Since $D_i(\xi,\eta)$ has degree $2-2i$ and $D_1(\xi,\eta)-D_1(\eta,\xi)=[\xi,\eta]$,
we reduce the problem to checking the equality
$D_2(\xi,\eta)=D_2(\eta,\xi)$. But  $D_2|_{\g\otimes\g}$ is a
$G$-invariant linear map $\g\otimes\g\rightarrow\K$ hence is
symmetric. Using a PBW-type argument, we see that the homomorphism $\U\rightarrow\A$
we constructed is an isomorphism.

Below we will need an alternative presentation of $\U$. Namely, set
$\vf_1:=\{\xi-\langle\chi,\xi\rangle,\xi\in\g\}$ and equip $\vf_1\subset \g\oplus \K$
with the Kazhdan grading. The algebras $\K[\vf_1^*],\K[\vf^*]$ are
naturally isomorphic.  Using this isomorphism, we get the
star-product on $\K[\vf_1^*][\hbar]$. This star-product is
homogeneous, for the star-product on $\K[\vf^*][\hbar]$ is
$\ad(h')$-invariant. The corresponding algebra $\A_1$ is naturally
isomorphic to the algebra $\A$ introduced in the previous paragraph.
\end{Ex}

By $A^\heartsuit$ we denote the subalgebra of the formal power
series algebra $\K[[\vf^*]]$ consisting of all formal power series
of the form $\sum_{i<n}f_i$ for some $n$, where $f_i$ is a
homogeneous power series of degree $i$. By definition, a power series $f$ has degree
$i$ if $D f=i f$ for the derivation $D:\K[[\vf^*]]\rightarrow \K[[\vf^*]]$
induced by the grading. For any $f,g\in
A^\heartsuit$ we have the well-defined element $f\circ
g:=\sum_{i=0}^\infty D_i(f,g)\in A^\heartsuit$. The algebra
$A^\heartsuit$ w.r.t. $\circ$ is denoted by $\A^\heartsuit$. This
algebra has a natural filtration $\F_i\A^\heartsuit$ such that
$\A\cap\F_i\A^\heartsuit=\F_i\A$.

Let us describe the Rees algebras of $\A,\A^\heartsuit$. The proof
of the following lemma is straightforward.

\begin{Lem}\label{Lem:3.61}
\begin{enumerate}
\item The map $A[\hbar]\rightarrow \A[\hbar^{-1},\hbar], \sum_{i,j}f_i\hbar^j\mapsto \sum_{i}f_i\hbar^{i+j}$,
where $f_i\in A(i)$, is a $\K^\times$-equivariant monomorphism of
$\K[\hbar]$-algebras. Its image coincides with $R_\hbar(\A)$.
\item The map $\K[[\vf^*, \hbar]]_{\K^\times-fin}\rightarrow
\A^\heartsuit[\hbar^{-1},\hbar]$ given by
$\sum_{i,j}f_i\hbar^j\mapsto \sum_{i,j}f_i\hbar^{i+j}$, where $f_i$
is a homogeneous element of $\K[[\vf^*]]$ of degree $i$, is a
$\K^\times$-equivariant monomorphism of $\K[\hbar]$-algebras. Its
image coincides with $R_\hbar(\A^\heartsuit)$.
\end{enumerate}
Here $A[\hbar], \K[[\vf^*, \hbar]]_{\K^\times-fin}$ are regarded as
quantum algebras.
\end{Lem}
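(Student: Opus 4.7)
My approach is a direct verification for both parts, organized around one preliminary identity: since $*$ is homogeneous of degree $2$, one has $D_i\bigl(A(p)\otimes A(q)\bigr)\subset A(p+q-2i)$. This homogeneity is exactly what forces the $\hbar$-exponent shift in the definition of the maps to be consistent with the multiplicative structure, and is the single input that drives all the verifications below.

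For part (1), call the map $\phi$. First I would check $\phi$ is multiplicative on $A$: for $f\in A(p)$, $g\in A(q)$, using $D_i(f,g)\in A(p+q-2i)$,
\begin{equation*}
\phi(f*g) \;=\; \sum_{i\ge 0} D_i(f,g)\,\hbar^{(p+q-2i)+2i} \;=\; \Bigl(\sum_{i\ge 0} D_i(f,g)\Bigr)\hbar^{p+q} \;=\; (f\circ g)\,\hbar^{p+q} \;=\; \phi(f)\,\phi(g).
\end{equation*}
Since $1\in A(0)$ gives $\phi(\hbar)=\hbar$, $\K[\hbar]$-linear extension makes $\phi$ a $\K[\hbar]$-algebra homomorphism. $\K^\times$-equivariance is immediate: for $f\in A(p)$ both $\phi(t.(f\hbar^j))$ and $t.\phi(f\hbar^j)$ equal $t^{p+j}f\hbar^{p+j}$, with $t.\hbar=t\hbar$ in source and target. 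For injectivity and the image description, write $u=\sum_{i,j}f_{ij}\hbar^j\in A[\hbar]$ as a finite sum with $f_{ij}\in A(i)$; then $\phi(u)=\sum_n\bigl(\sum_{i+j=n}f_{ij}\bigr)\hbar^n$ vanishes iff each $\sum_{i+j=n}f_{ij}=0$, which by $A=\bigoplus_i A(i)$ forces all $f_{ij}=0$. The $\hbar^n$-coefficient in the image ranges freely over $\bigoplus_{i\le n}A(i)=\F_n\A$, so the image equals $\bigoplus_n \hbar^n\F_n\A=R_\hbar(\A)$.

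Part (2) runs in parallel, with one preliminary remark: $\K[[\vf^*,\hbar]]_{\K^\times-fin}$ is a subalgebra because weights of a product lie in the (finite) Minkowski sum of those of the factors, so the star-product (extended to $\K[[\vf^*,\hbar]]$ via the bidifferential operators $D_i$) preserves $\K^\times$-finiteness. With this, the algebra-homomorphism, equivariance, and injectivity arguments copy verbatim. For the image, a weight-$n$ element of the source has the form $\sum_{j\ge 0}a_{n,j}\hbar^j$ with $a_{n,j}\in A(n-j)$; its $\psi$-image is $\bigl(\sum_{j\ge 0}a_{n,j}\bigr)\hbar^n$, whose $\hbar^n$-coefficient is a well-defined element of $\F_n\A^\heartsuit$ (its homogeneous components sit in degrees $n,n-1,\ldots$, bounded above by $n$) and can be arbitrary there. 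Hence the image equals $R_\hbar(\A^\heartsuit)$.

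I foresee no real obstacle; the lemma amounts to a translation of the Rees algebra construction into the language of homogeneous star-products. The only step warranting a moment's care is checking in part (2) that the image lands in $\A^\heartsuit[\hbar,\hbar^{-1}]$ rather than in some larger completion, and this is precisely guaranteed by the degree-boundedness built into $\A^\heartsuit$ together with $\K^\times$-finiteness of the source.
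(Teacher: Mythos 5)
Your verification is correct, and it is precisely the "straightforward" check the paper omits (the paper states the lemma without proof). The key point you isolate — homogeneity of degree $2$ forcing $D_i(A(p)\otimes A(q))\subset A(p+q-2i)$, so the exponent shift intertwines $*$ with $\circ$ — is exactly what makes the maps algebra isomorphisms onto the Rees algebras, and your treatment of the image in part (2) correctly accounts for the degree-boundedness defining $\A^\heartsuit$.
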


The following proposition will be used in the proof of Proposition
\ref{Prop:3.33}.

\begin{Prop}\label{Prop:1.41}
The algebra $\A^\heartsuit$ is a Noetherian domain.
\end{Prop}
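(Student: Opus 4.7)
My plan is to reduce Noetherianness and integrality of $\A^\heartsuit$ to corresponding properties of a commutative ring via the Rees algebra $R_\hbar(\A^\heartsuit)$. By Lemma~\ref{Lem:3.61}(2) this Rees algebra is identified with the subalgebra $\K[[\vf^*,\hbar]]_{\K^\times\text{-fin}}$ of the formal power series ring, equipped with a noncommutative star-product. Reducing modulo $\hbar$ one recovers $\gr\A^\heartsuit\cong\K[[\vf^*]]_{\K^\times\text{-fin}}$ as a commutative ring.

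First I would show that $\K[[\vf^*]]_{\K^\times\text{-fin}}$ is a Noetherian commutative domain. Decompose $\vf=V_-\oplus V_0\oplus V_+$ according to the sign of the $\K^\times$-weight. By Hilbert's theorem for the reductive group $\K^\times$ the classical invariant ring $\K[\vf^*]^{\K^\times}$ is finitely generated, and since $\K^\times$ is reductive, taking invariants commutes with formal completion at the origin, so $\K[[\vf^*]]^{\K^\times}$ is a Noetherian complete local ring. Each weight-$k$ component of $\K[[\vf^*]]_{\K^\times\text{-fin}}$ is a finite $\K[[\vf^*]]^{\K^\times}$-linear combination of minimal weight-$k$ monomials in $V_-^*\oplus V_+^*$; hence $\K[[\vf^*]]_{\K^\times\text{-fin}}$ is a finitely generated algebra over the Noetherian ring $\K[[\vf^*]]^{\K^\times}$ and therefore Noetherian. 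Being a subring of the integral domain $\K[[\vf^*]]$ it is a domain.

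Next I would lift these properties to $R_\hbar(\A^\heartsuit)$. Integrality is straightforward: $R_\hbar(\A^\heartsuit)$ is $\hbar$-torsion-free, so for nonzero $a,b\in R_\hbar(\A^\heartsuit)$ the $\hbar$-leading terms $\bar a,\bar b\in\gr\A^\heartsuit$ are nonzero, and $\overline{ab}=\bar a\bar b\neq 0$ in the commutative domain $\gr\A^\heartsuit$, so $ab\neq 0$. For Noetherianness I use that for a $\Z$-graded ring, graded Noetherianness implies Noetherianness, and apply a graded Nakayama-type argument: lift finitely many generators of the image of any graded ideal $\I\subset R_\hbar(\A^\heartsuit)$ in the Noetherian quotient $\gr\A^\heartsuit$ to elements of $\I$. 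The geometric series needed to verify that these lifts generate $\I$ converge inside the weight-zero subring $\K[[\vf^*,\hbar]]^{\K^\times}\subset\K[[\vf^*,\hbar]]_{\K^\times\text{-fin}}$, which is itself a formal power series ring.

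Finally, $\A^\heartsuit\cong R_\hbar(\A^\heartsuit)/(\hbar-1)R_\hbar(\A^\heartsuit)$ is a quotient of a Noetherian ring hence Noetherian, and the inclusion $\A^\heartsuit\hookrightarrow R_\hbar(\A^\heartsuit)[\hbar^{-1}]\cong\A^\heartsuit[\hbar,\hbar^{-1}]$ displays $\A^\heartsuit$ as a subring of the localization of the Noetherian domain $R_\hbar(\A^\heartsuit)$, so $\A^\heartsuit$ is a domain. The principal obstacle is the graded Nakayama step: the $\K^\times$-grading on $R_\hbar(\A^\heartsuit)$ is not bounded below when $V_-\neq 0$, so the standard lemma does not apply directly, and care is needed to verify that the inverses arising during the lifting remain inside the $\K^\times$-finite subring rather than escaping to the full formal power series ring.
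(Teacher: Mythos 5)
Your overall architecture --- pass to $R_\hbar(\A^\heartsuit)\cong\K[[\vf^*,\hbar]]_{\K^\times-fin}$ via Lemma \ref{Lem:3.61}(2), prove finite generation of graded left ideals there, and descend to $\A^\heartsuit$ by setting $\hbar=1$ --- is the same as the paper's, and the domain part is fine (the paper likewise just observes that $\gr\A^\heartsuit$ embeds into $\K[[\vf^*]]$). Where you genuinely diverge is the commutative base case $B=\K[[\vf^*]]_{\K^\times-fin}$: the paper handles it by iterating the same power-series Hilbert-basis argument over the variables of $\vf$ one at a time, whereas you present $B$ as a finitely generated algebra over the complete local Noetherian ring $\K[[\vf^*]]^{\K^\times}$, using a Gordan/Dickson argument to see that each weight component is module-finite over the weight-zero part. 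That route is valid and arguably more structural; the two facts it leans on (that invariants of a linear torus action commute with completion at the origin, and that the weight-$k$ monomials are finitely generated over the weight-$0$ monoid) both hold, though each deserves a line of proof. (Minor slip: $\K[[\vf^*,\hbar]]^{\K^\times}$ is a complete local Noetherian ring, not literally a formal power series ring in general.)

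The step that would fail as written is the ``graded Nakayama'' lift. The image of a graded left ideal $\I\subset R_\hbar(\A^\heartsuit)$ in $\gr\A^\heartsuit\cong R_\hbar(\A^\heartsuit)/\hbar R_\hbar(\A^\heartsuit)$ is $\I/(\I\cap\hbar R_\hbar(\A^\heartsuit))$, and since $\I$ need not be $\hbar$-saturated this loses information: for $\I=\hbar R_\hbar(\A^\heartsuit)$ the image is zero, so lifts of its generators cannot generate $\I$, no matter how the geometric series are summed. The correct device --- and the one the paper uses --- is the ascending chain $I_k$ of classical parts of $\hbar^{-k}(\I\cap\hbar^k B[[\hbar]])$ inside $B$; Noetherianity of $B$ makes it stabilize at some $m$, one chooses $\K^\times$-semiinvariant elements $f_{ij}\in\hbar^j\I$ for all $j\leqslant m$ whose order-$j$ coefficients generate $I_j$, and only then does the $\hbar$-adic successive approximation close up. Relatedly, you have misplaced the ``principal obstacle'': the unboundedness of the $\K^\times$-grading is harmless, because the coefficients produced by the approximation can be chosen semiinvariant, and a semiinvariant element of $\K[[\vf^*,\hbar]]$ automatically lies in the $\K^\times$-finite part. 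The genuine care point is the need for generators at all $\hbar$-levels up to $m$, not just at level $0$.
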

\begin{proof}
Clearly, $\gr\A^\heartsuit\cong \gr A^\heartsuit$ is naturally
embedded into $\K[[\vf^*]]$. So $\gr \A^\heartsuit$ is a domain
hence $\A^\heartsuit$ is. Thanks to Proposition \ref{Prop:1.6} and
assertion 2 of Lemma \ref{Lem:3.61}, to prove that $\A^\heartsuit$
is Noetherian it is enough to check that any $\K^\times$-stable left
(to be definite) ideal of $\K[[\vf^*,\hbar]]_{\K^\times-fin}$ is
finitely generated. Our argument is a ramification of the proof of
the Hilbert basis theorem for power series. Namely, suppose that we
have already proved the analogous property for
$B:=\K[[\vf^*]]_{\K^\times-fin}$. One checks directly
$\K[[\vf^*,\hbar]]_{\K^\times-fin}=B[[\hbar]]_{\K^\times-fin}$. Let
$\I$ be a $\K^\times$-stable left ideal in
$B[[\hbar]]_{\K^\times-fin}$. For $k\in\N$ let $I_k$ denote the
classical part of $\hbar^{-k}(\I\cap \hbar^k B[[\hbar]])$. Clearly,
$I_k,k\in\N,$ is an ascending chain of $\K^\times$-stable ideals  of
$B$. There is $m\in\N$ with $I_m=I_k$ for any $k>m$. So we can
choose a finite system $f_{ij}\in \hbar^j\I, j\leqslant m,$ of
$\K^\times$-semiinvariant elements such that the classical parts of
$\hbar^{-j}f_{ij}$ generate $I_j$ for any $j\leqslant m$. Now choose
$f\in \I$. There are $a_{ij}^0\in B$ such that $f\in
\sum_{i,j}a_{ij}^0* f_{ij}+\hbar B[[\hbar]]$. Replace $f$ with
$f-\sum_{i,j}a_{ij}^0*f_{ij}\in \I$ and find $a_{ij}^1\in B$ with
$f\in \sum_{i,j}(a_{ij}^0+\hbar a_{ij}^1)*f_{ij}+\hbar^2
B[[\hbar]]$, etc. So for any $f\in\I$ we can inductively construct
$a_{ij}\in \K[[\vf^*,\hbar]]$ such that $f=\sum a_{ij}*f_{ij}$. If
$f$ is $\K^\times$-semiinvariant, then $a_{ij}$ can also  be chosen
$\K^\times$-semiinvariant.

To check that any $\K^\times$-stable ideal of $B$ is finitely
generated we use an analogous argument inductively.
\end{proof}

For $u,v\in \vf(1)$ denote by $\omega_1(u,v)$ the constant term of
$u\circ v-v\circ u$. Choose a maximal isotropic (w.r.t. $\omega_1$)
subspace $\y\subset \vf(1)$ and set $\m:=\y\oplus
\bigoplus_{i\leqslant 0}\vf(i)$. Further, choose a homogeneous basis
$v_1,\ldots,v_n$ of $\vf$  such that $v_1,\ldots,v_m$ form a basis
in $\m$. Let $d_i$ denote the degree of $v_i$.

   Any element   $a\in\A^\heartsuit$
can be written in a unique way as an infinite sum  \begin{equation}\label{eq:3.12}\sum_{i_1\geqslant i_2\geqslant\ldots\geqslant i_l}a_{i_1,\ldots,i_l} v_{i_1}\circ\ldots \circ v_{i_l}\end{equation}  such that
$\sum_{j=1}^ld_{i_j}\leqslant c$, where $c$ depends on $a$.
Moreover, $a$ is contained in $\F_k\A^\heartsuit$ iff
the sum (\ref{eq:3.12}) does not contain a monomial $v_{i_1}\circ\ldots
\circ v_{i_l}$ with $\sum_{j=1}^l d_{i_j}>k$.

Let  $\I^\heartsuit(k)$ denote the left ideal of $\A^\heartsuit$
consisting of all $a$ such that any monomial in $\widetilde{a}$ has
the form $v_{i_1}\circ\ldots \circ v_{i_l}$ with
$v_{i_{l-k+1}}\in\m$.  Set $\I(k):=\A\cap \I^\heartsuit(k)$. We
equip $\A$ (resp. $\A^\heartsuit$) with a topology, taking the left
ideals $\I(k)$ (resp., $\I^\heartsuit(k)$) for a set of fundamental
neighborhoods of $0$. Since $[\m,\F_i\A^\heartsuit]\subset
\F_{i-1}\A^\heartsuit$, we see that for any  $a\in \A^\heartsuit$
and any $j\in \N$ there is some $k$ such that $\I^\heartsuit(k)\circ
a\subset \I^\heartsuit(j)$. Thus the inverse limits $\varprojlim
\A/\I(k), \varprojlim \A^\heartsuit/\I^\heartsuit(k)$ are equipped
with natural topological algebra structures. Moreover, for any $a\in
\A^\heartsuit, k\in \N$ all but finitely many monomials of
$\widetilde{a}$ lie in $\I^\heartsuit(k)$. Therefore
$\A^\heartsuit=\I^\heartsuit(k)+\A$ for any $k$. It follows that the
topological algebras $\varprojlim \A/\I(k), \varprojlim
\A^\heartsuit/\I^\heartsuit(k)$ are naturally isomorphic. We denote
this topological algebra by $\A^\wedge$. It is clear that $\cap_k
\I^\heartsuit(k)=\{0\}$ hence the natural homomorphisms
$\A,\A^\heartsuit\rightarrow \A^\wedge$ are injective.

\begin{Rem}\label{Rem:1.42}
Note that any element of $\K[[\vf^*,\hbar]]$ can be written uniquely
as an infinite sum of monomials $a_{ii_1i_2\ldots i_k}v_{i_1}*\ldots
*v_{i_k}\hbar^i, i_1\geqslant i_2\geqslant\ldots\geqslant i_k, a_{ii_1i_2\ldots i_k}\in
\K$. Consider the subspace $\A^\wedge_\hbar\subset
\K[[\vf^*,\hbar]]$ consisting of all series $\sum
a_{i,i_1,\ldots,i_k} v_{i_1}*\ldots
*v_{i_k}\hbar^i$ satisfying the following finiteness condition:
\begin{itemize}
 \item for any given $j\geqslant 0$ there are only finitely many tuples
 $(i,i_1,\ldots,i_k)$ with $a_{i,i_1,\ldots,i_k}\neq 0$
and $v_{i_{k-j}}\not\in\m$.
\end{itemize}
In particular, $\A^\wedge_\hbar\cap \K[[\hbar]]=\K[\hbar]$.

Any $v_i\not\in \m$ has positive degree. Therefore any element of $\K[[\vf^*,\hbar]]_{\K^\times-fin}$
satisfies the finiteness condition above.  In other words,  $\K[[\vf^*,\hbar]]_{\K^\times-fin}\subset
\A^\wedge_\hbar$.

Note also that $\A^\wedge_\hbar/(\hbar-1)\A^\wedge_\hbar$ is identified with the space of
formal power series of the form $\sum a_{ii_1\ldots i_k}v_{i_1}\circ v_{i_2}\circ\ldots\circ v_{i_k}, i_1\geqslant i_2\geqslant\ldots
\geqslant i_k$, where $a_{ii_1\ldots i_k}$ satisfy the same finiteness condition. But the space of such
series is nothing else but $\A^\wedge$. So $\A^\wedge_\hbar/(\hbar-1)\A^\wedge_\hbar\cong \A^\wedge$.
\end{Rem}

Now let us relate $\A^\wedge$ with  completions of the form used in
Theorem \ref{Thm:3.3}.

\begin{Lem}\label{Lem:1.43}
Let $\widetilde{\m}$ be a subspace in $\A^\heartsuit$ such that
there is a basis in $\widetilde{\m}$ of the form $v_i+u_i,
i=1,\ldots,m,$ with $u_i\in (\F_{d_i}\A^\heartsuit\cap \vf^2
A^\heartsuit)+\F_{d_i-2}\A^\heartsuit$. Then
$\A^\heartsuit\widetilde{\m}\subset \I^\heartsuit(1)$ and for any
$k\in \N$ there is $l\in \N$ such that
$\A^\heartsuit\widetilde{\m}^l\subset \I^\heartsuit(k)$.
\end{Lem}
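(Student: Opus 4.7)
First I would prove the stronger claim $\widetilde{\m}\subset\I^\heartsuit(1)$; the inclusion $\A^\heartsuit\widetilde{\m}\subset\I^\heartsuit(1)$ then follows because $\I^\heartsuit(1)$ is a left ideal. Decomposing $u_i=u'_i+u''_i$ with $u'_i\in\F_{d_i}\A^\heartsuit\cap\vf^2 A^\heartsuit$ and $u''_i\in\F_{d_i-2}\A^\heartsuit$ and noting that $v_i\in\m$ forces $d_i\leqslant 1$, the key principle is that every $v_j\notin\m$ has $d_j\geqslant 1$, so any nonconstant $\circ$-normal monomial of Kazhdan sum $\leqslant 0$ must contain a factor of nonpositive degree---which lies in $\m$ and, being the smallest index, is the last factor. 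This immediately shows $u''_i\in\F_{-1}\A^\heartsuit\subset\I^\heartsuit(1)$. For $u'_i$, its length-$\geqslant 2$ contributions in $\circ$-normal form sit in $\F_1\A^\heartsuit\cap\vf^2 A^\heartsuit$---if all factors were outside $\m$ the Kazhdan sum would be $\geqslant 2$, a contradiction---while the length-$\leqslant 1$ contributions arising from commutator corrections in the commutative-to-$\circ$-normal conversion drop Kazhdan by $\geqslant 2$, reducing to the previous case.

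For the power statement I would induct on $k$. At the symbol level, in $\gr\A^\heartsuit=A^\heartsuit$ (commutative), the same budget argument places $\sigma(\widetilde{v}_i)=v_i+\sigma(u'_i)$ inside $\m\cdot A^\heartsuit$. Consequently the symbol of any element of $\widetilde{\m}^l$ lies in $\m^l A^\heartsuit$, which under the normal-form identification places the top-Kazhdan component of $\widetilde{\m}^l$ inside $\I^\heartsuit(l)\subset\I^\heartsuit(k)$ for $l\geqslant k$. A second auxiliary fact I would verify is that $[v_a,v_b]\in\I^\heartsuit(1)$ for every $v_a,v_b\in\m$: the $\circ$-commutator has Kazhdan $\leqslant d_a+d_b-2\leqslant 0$, and a nonzero constant term could occur only when $d_a=d_b=1$, i.e.\ $v_a,v_b\in\y$, but isotropy of $\y$ gives $\omega_1(v_a,v_b)=0$.

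To close the induction, observe that while putting $\widetilde{v}_{i_1}\circ\cdots\circ\widetilde{v}_{i_l}$ into $\circ$-normal form, each reordering step either preserves the $\m$-factor count (swap case) or replaces two factors by a commutator already lying in $\I^\heartsuit(1)$ (commutator case); a bookkeeping argument balancing $\m$-count against the number of commutator reductions yields $l_{k+1}=l_k+C$, with $C$ depending on $\dim\vf$ and the minimal Kazhdan weight in $\m$. The main obstacle is precisely this bookkeeping: each commutator temporarily destroys two accumulated $\m$-factors, and one must verify that further $\circ$-normalization of its $\I^\heartsuit(1)$-valued output restores enough $\m$-factors without spoiling those already accumulated. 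A cleaner alternative I would prefer is to argue inside the Rees algebra $R_\hbar(\A^\heartsuit)\subset\K[[\vf^*,\hbar]]_{\K^\times-fin}$, where the symbol argument extends uniformly in $\hbar$ and the desired inclusion descends to $\A^\heartsuit$ upon specializing $\hbar=1$.
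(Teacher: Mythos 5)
Your treatment of the first inclusion is fine and arguably more direct than the paper's: you check $\widetilde{\m}\subset \I^\heartsuit(1)$ outright, using that every basis vector outside $\m$ has positive Kazhdan degree (so a nonconstant normal monomial of total degree $\leqslant 0$ must end in an $\m$-factor), and then invoke the left-ideal property of $\I^\heartsuit(1)$. The paper instead rewrites $u_i=m_i+\sum_j a_{ij}\circ v_j$ with $v_j\in\m$; both routes work for this half.

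The second assertion is where your proposal has a genuine gap, and you essentially concede it yourself. Two concrete problems. First, the commutators produced while normal-ordering are $[v_a,v_b]$ for \emph{arbitrary} basis vectors, not only for $v_a,v_b\in\m$; your auxiliary fact covers only the latter case, and for, say, $d_a=d_b=2$ the commutator lies in $\F_{2}\A^\heartsuit$ and has no reason to belong to $\I^\heartsuit(1)$. Second, the recursion $l_{k+1}=l_k+C$ is asserted rather than proved, and the strategy of "restoring" the $\m$-factors destroyed by a commutator is not how the loss is actually compensated. The mechanism that closes the argument is a dichotomy based on degree drop: every commutator correction strictly lowers the Kazhdan filtration degree, since $[\F_i\A^\heartsuit,\F_j\A^\heartsuit]\subset\F_{i+j-2}\A^\heartsuit$, while on the other hand $\F_{-kd}\A^\heartsuit\subset\I^\heartsuit(k)$ for $d=\max_i|d_i|$ (a normal monomial of total degree $\leqslant -kd$ must contain at least $k$ factors from $\m$, and in normal order these are the trailing ones). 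So for each monomial arising in the reordering of a product with $l$ factors from $\m$, either few commutator steps were used --- in which case most of the $l$ trailing $\m$-factors survive --- or many were used --- in which case the degree has fallen below $-kd$ and membership in $\I^\heartsuit(k)$ is automatic; choosing $l$ large relative to $k$ and $d$ forces one alternative for every term. Without this low-degree absorption your induction cannot be closed. The Rees-algebra "cleaner alternative" does not repair this either: the entire difficulty lives in the sub-leading ($\hbar$-correction) terms, which the symbol computation in $\gr\A^\heartsuit$ does not see, so nothing new descends upon setting $\hbar=1$.
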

\begin{proof}
We can write $u_i$ in the form $m_i+\sum_{j=1}^m a_{ij}\circ v_{j}$,
where $a_{ij}\in \F_{d_i-d_j}\A^\heartsuit\cap \vf A^\heartsuit,
m_{i}\in \F_{d_i-2}\A^\heartsuit\cap \vf\subset
\A^\heartsuit\widetilde{\m}$. Thus
$\A^\heartsuit\widetilde{\m}\subset \I^\heartsuit(1)$. Replacing
the elements $v_i+u_i$ with their suitable linear combinations, we may
assume that all $m_i$ are zero.  It remains to prove that for any
$k\in\N$ there is $l\in \N$ such that $f:=a_1\circ
v_{i_1}\circ\ldots \circ a_l\circ v_{i_l}\in \I^\heartsuit(k)$,
where $v_{i_1},\ldots,v_{i_l}\in\m$ and $a_j\in \F_{1-d_{i_j}}\A$
for all $j$. To this end, we perform certain steps. On each step, we
get the decomposition of $f$ into the sum of monomials of the form
$b_1\circ v_{j_1}\circ b_{2}\circ\ldots\circ v_{j_q}$, where any
$b_j$ is the product of $v_i\not\in\m$. Suppose that $b_{p}\neq 1$
for some $p>q-k+1$. Replace $v_{j_{p-1}}\circ b_p$ in this monomial
 with $[v_{j_{p-1}},b_p]+b_p\circ v_{j_{p-1}}$. Then replace the old monomial with the sum of two new ones.
We claim that after some steps all monomials will lie in
$\I^\heartsuit(k)$ provided $l$ is sufficiently large. This is
deduced from $[\F_i\A^\heartsuit,\F_j\A^\heartsuit]\subset
\F_{i+j-2}\A^\heartsuit$ and $F_{-kd}\A^\heartsuit\subset
\I(k)^\heartsuit$, where $d:=\max_{i=1}^k|d_i|$.
\end{proof}

Recall that   any element   $a\in\A^\heartsuit$
can be written in a unique way as an infinite sum (\ref{eq:3.12}).

\begin{Lem}\label{Lem:1.44}
Suppose that  $\A\subset \A^\heartsuit$ coincides with the set of
all elements $a$ such that the sum (\ref{eq:3.12}) is finite (this
is the case, for example, when the grading on $\vf$ is positive).
Then the systems of subspaces $\I(k),\A\m^k\subset \A$ are
compatible, i.e., for any $k\in \N$ there exist $k_1,k_2$ such that
$\I(k_1)\subset \A\m^k, \A\m^{k_2}\subset \I(k)$.
\end{Lem}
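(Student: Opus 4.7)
The plan is to prove the two required containments separately, reducing the nontrivial one to the already-established Lemma \ref{Lem:1.43}.

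For the easy direction, I would take $k_1=k$ and argue as follows. The hypothesis on $\A$ says that any $a\in\A$ has a \emph{finite} normal-form expansion $a=\sum a_{i_1,\ldots,i_l}v_{i_1}\circ\cdots\circ v_{i_l}$. If additionally $a\in\I(k)=\A\cap\I^\heartsuit(k)$, each of these finitely many monomials satisfies $v_{i_{l-k+1}}\in\m$, and since $i_1\geqslant\ldots\geqslant i_l$ this forces $v_{i_{l-k+1}},\ldots,v_{i_l}\in\m$. Each monomial then factors as $(v_{i_1}\circ\cdots\circ v_{i_{l-k}})\circ(v_{i_{l-k+1}}\circ\cdots\circ v_{i_l})\in\A\cdot\m^k$, so $a\in\A\m^k$.

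For the other direction, I would apply Lemma \ref{Lem:1.43} with $\widetilde{\m}=\m$. The hypothesis of that lemma is satisfied trivially: take the basis $v_1,\ldots,v_m$ of $\m$ itself, so that $u_i=0$, which manifestly lies in the required subspace. Lemma \ref{Lem:1.43} then supplies, for the given $k$, an integer $l\in\N$ with $\A^\heartsuit\m^l\subset\I^\heartsuit(k)$. Intersecting with $\A$, and using $\A\m^l\subset\A\cap\A^\heartsuit\m^l$, gives
\[
\A\m^l\;\subset\;\A\cap\I^\heartsuit(k)\;=\;\I(k),
\]
so $k_2:=l$ does the job.

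There is no real obstacle: the bulk of the content is the analogous statement at the level of $\A^\heartsuit$ proved in Lemma \ref{Lem:1.43}, and the hypothesis that $\A$ consists precisely of the finite normal-form elements is used only to make the first, purely combinatorial, direction immediate.
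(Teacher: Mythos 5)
Your proof is correct and follows the same route as the paper: the inclusion $\I(k)\subset\A\m^k$ is the ``clear'' direction (which you justify by factoring each finite normal-form monomial), and the inclusion $\A\m^{k_2}\subset\I(k)$ is obtained exactly as in the paper by applying Lemma \ref{Lem:1.43} with $\widetilde{\m}=\m$ (i.e.\ $u_i=0$) and intersecting with $\A$.
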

\begin{proof}
It is clear that $\I(k)\subset \A\m^k$. Thanks to Lemma
\ref{Lem:1.43}, we are done.
\end{proof}

Now we are going to describe the behavior of ideals under
completions.

Take $\I\in\Id(\A)$ and set $I:=\gr(\I)$. It is clear that $I$
coincides with the classical part of $R_\hbar(\I)\subset
R_\hbar(\A)\cong A[\hbar]$. Further, let $\overline{\I}_\hbar$
denote the closure of $R_\hbar(\I)$ in  $\K[[\vf^*,\hbar]]$ (w.r.t.
the topology of a formal power series algebra). Then
$\overline{\I}_\hbar$ is a $\K^\times$-stable closed ideal of the
quantum algebra $\K[[\vf^*,\hbar]]$.

\begin{Prop}\label{Prop:1.45}
\begin{enumerate}
 \item The ideal $\overline{\I}_\hbar$ is $\hbar$-saturated.
\item The classical part of $\overline{\I}_\hbar$ coincides with the closure $\widehat{I}$ of $I$ in $\K[[\vf^*]]$.
\item Suppose the grading on $\vf$ is positive. Let $\J$ be a closed
$\hbar$-saturated, $\K^\times$-stable ideal of $\K[[\vf^*,\hbar]]$.
Then there is a unique $\I\in\Id(\A)$ such that
$\J=\overline{\I}_\hbar$. Moreover, in this case $\J\cap
A[\hbar]=R_\hbar(\I)$.
\end{enumerate}
\end{Prop}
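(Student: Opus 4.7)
The proof rests on three ingredients: the $\hbar$-saturation of $R_\hbar(\I)\subset A[\hbar]$ furnished by Proposition~\ref{Prop:1.6}, a graded $\K$-linear splitting $\sigma:I\to R_\hbar(\I)$ of the reduction-mod-$\hbar$ surjection (which exists because $R_\hbar(\I)\twoheadrightarrow I$ is a graded surjection of graded $\K$-vector spaces), and the compatibility between the $\K^\times$-grading and the $\mathfrak{m}$-adic filtration on $\K[[\vf^*,\hbar]]$, where $\mathfrak{m}=(\vf^*,\hbar)$.

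For (1), given $f\in\K[[\vf^*,\hbar]]$ with $\hbar f\in\overline{\I}_\hbar$, I choose approximations $r_N\in R_\hbar(\I)$ with $\hbar f\equiv r_N\pmod{\mathfrak{m}^{N+1}}$. Reducing mod $\hbar$ shows the classical part $P_N:=r_N\bmod\hbar$ lies in $I\cap \mathfrak{m}_0^{N+1}$ with $\mathfrak{m}_0=(\vf^*)$. Applying $\sigma$, the lift $\tilde P_N:=\sigma(P_N)\in R_\hbar(\I)$ inherits high $\mathfrak{m}$-adic order from the high $\mathfrak{m}_0$-order of $P_N$ through the graded structure. Then $r_N-\tilde P_N\in R_\hbar(\I)\cap\hbar\K[[\vf^*,\hbar]]$, so $\hbar$-saturation of $R_\hbar(\I)$ yields $s_N:=(r_N-\tilde P_N)/\hbar\in R_\hbar(\I)$, and a short computation shows $f-s_N\to 0$ in the $\mathfrak{m}$-adic topology, so $f\in\overline{\I}_\hbar$. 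For (2), the inclusion ``classical part of $\overline{\I}_\hbar$'' $\subset\widehat I$ follows from continuity of the reduction $\K[[\vf^*,\hbar]]\to\K[[\vf^*]]$; conversely, approximating $g\in\widehat I$ by $i_n\in I$ and applying the same splitting yields a Cauchy sequence $\sigma(i_n)\in R_\hbar(\I)$ whose limit in $\overline{\I}_\hbar$ reduces modulo $\hbar$ to $g$.

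For (3), set $\I_\hbar:=\J\cap A[\hbar]$. Under the positive grading hypothesis every weight space $V_n$ of $\K[[\vf^*,\hbar]]$ is finite-dimensional and contained in $A[\hbar]$, so $A[\hbar]$ coincides with the $\K^\times$-finite part, and $\I_\hbar=\J_{\K^\times\text{-fin}}$ is both $\K^\times$-stable and $\hbar$-saturated. Proposition~\ref{Prop:1.6} then produces a unique $\I\in\Id(\A)$ with $\I_\hbar=R_\hbar(\I)$. The inclusion $\overline{\I}_\hbar\subset\J$ is automatic from closedness of $\J$. For the reverse, any $f\in\J$ admits a formal homogeneous decomposition $f=\sum_n f_n$ with $f_n\in V_n$; a Vandermonde argument applied to the $\K^\times$-action on each finite-dimensional quotient $\K[[\vf^*,\hbar]]/\mathfrak{m}^{N+1}$, together with closedness of $\J$, forces each $f_n\in\J$, hence $f_n\in\J\cap V_n\subset R_\hbar(\I)$, and the partial sums approximate $f$ in $\mathfrak{m}$-adic topology. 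For the uniqueness of $\I$ and for $\J\cap A[\hbar]=R_\hbar(\I)$, the key observation is that $\mathfrak{m}^{N+1}\cap V_n=0$ for $N\geqslant n$ (positive grading), giving $\overline{R_\hbar(\I)}\cap V_n=R_\hbar(\I)\cap V_n$ and summing over $n$ gives $\overline{R_\hbar(\I)}\cap A[\hbar]=R_\hbar(\I)$.

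The principal technical obstacle lies in parts~(1) and~(2): one must ensure that the lift $\sigma(P_N)$ has high $\mathfrak{m}$-adic order whenever $P_N\in\mathfrak{m}_0^{N+1}$, that is, that the $\K^\times$-grading on $R_\hbar(\I)$ interacts well with the $\mathfrak{m}$-adic filtration so that high $\vf^*$-vanishing of a graded component of $I$ forces high $\mathfrak{m}$-vanishing of its lift in $R_\hbar(\I)$. This is transparent under positive grading but, in the general setting of~(1) and~(2), requires a careful tracking of how each graded piece of $R_\hbar(\I)$ sits inside $\mathfrak{m}^k\K[[\vf^*,\hbar]]$.
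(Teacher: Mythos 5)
Your part (3) is sound and is essentially the paper's own (terse) argument: under a positive grading the $\K^\times$-weight spaces of $\K[[\vf^*,\hbar]]$ are finite dimensional and lie in $A[\hbar]$, weight and $\mathfrak m$-adic order bound one another, and Proposition \ref{Prop:1.6} applied to $\J\cap A[\hbar]$ does the rest.

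Parts (1) and (2), however, contain a genuine gap --- exactly the one you name in your closing paragraph as ``the principal technical obstacle'' and then leave unresolved. Everything rests on the claim that an element of $I$ lying in a high power of your $\mathfrak m_0$ admits a lift to $R_\hbar(\I)$ lying in a correspondingly high power of $\mathfrak m$. A graded linear splitting $\sigma:I\to R_\hbar(\I)$ does not deliver this: a weight-$d$ element of $R_\hbar(\I)\subset A[\hbar]$ has the form $\sum_{i\leqslant d}a_i\hbar^{d-i}$ with $a_i\in A(i)$ and classical part $a_d$, and the lower components $a_i$, $i<d$, are in no way constrained by the vanishing order of $a_d$ at the origin. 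When the grading on $\vf$ has zero or negative components --- which it does in the intended applications, e.g.\ $\vf_1=\g$ with the Kazhdan grading in Example \ref{Ex:3.63}, and assertions (1),(2) are applied there --- such a lift need not lie in any power of $\mathfrak m$ beyond the first, and gradedness alone gives no way to improve it. The existence of good lifts is the actual content to be proved, not a bookkeeping step.

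The paper closes this gap with an auxiliary claim proved by the Artin--Rees lemma. One picks finitely many $a_1,\dots,a_l\in R_\hbar(\I)$ whose classical parts generate the ideal $I_0\subset A$ generated by all the classical parts in play; Artin--Rees yields $p$ with $I_0\cap J_\hbar^{i+p}\subset J_\hbar^iI_0$ (here $J_\hbar$ is the ideal of the origin); one writes $a_k^0=\sum_j r_k^ja_j^0$ with $r_k^j\in J_\hbar^k$ and lifts this identity to the quantum algebra by setting $d_k:=\sum_jr_k^j*a_j$. The inclusion $J_\hbar^m*J_\hbar^n\subset J_\hbar^{m+n}$ --- a consequence of the standing hypothesis that each $D_i$ is a bidifferential operator of order at most $i$ in each variable, a hypothesis your argument never invokes --- guarantees $d_k\in R_\hbar(\I)\cap J_\hbar^k$ with the same classical part as $a_k$, and $\hbar$-saturation of $R_\hbar(\I)$ then allows division of $a_k-d_k$ by $\hbar$. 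Your proofs of (1) and (2) would go through with $\sigma(P_N)$ replaced by such a $d_N$; without Artin--Rees and the order bound on the $D_i$, they are incomplete.
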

\begin{proof}
Let $J_\hbar$ denote the ideal of the point $(0,0)$ in the
commutative algebra $A[\hbar]$. Since $D_i$ is of  order at most $i$
at each argument for any $i$, we get $J_\hbar^k* J_\hbar^l\subset
J_\hbar^{k+l}$.

To prove assertions 1,2 we need  an auxiliary claim

\begin{itemize}\item[(*)]
Let $a_n, n\in \N,$ be a sequence of elements of $R_\hbar(\I)$. Let
$a_n^0$ denote the classical part of $a_n$. Suppose $a_n^0\in
J_\hbar^n$. Then, probably after replacing the sequence $(a_n)$ with
an infinite subsequence, there are sequences $(c_n),(d_n)$ of
elements of $R_\hbar(\I)$ such that $d_i\in J_\hbar^n$ and
$a_n=\hbar c_n+d_n$ for all $n$.
\end{itemize}

Let $I_0$ denote the ideal of $A$ generated by $a^0_n,n\in\N$. There
is $l\in \N$ such that  $I_0$ is generated by $a^0_1,\ldots,a^0_l$.
By the Artin-Rees lemma, there is $p\in \N$ with $I_0\cap
J_\hbar^{i+p}\subset J_\hbar^i I_0$ for all $i$. So, possibly after
removing some $a_i$, we may assume that
$a_k^0=r^1_ka_1^0+\ldots+r^l_ka_l^0,r^i_k\in J_\hbar^k,$ for any
$k>l$. Set $d_i:=\sum_{j=1}^m r^j_i*a_i$. By the remark in the
beginning of the proof, $d_i\in R_\hbar(\I)\cap J_\hbar^i$. Further,
$a_i-d_i\in \hbar R_\hbar(\I)$ and we set $c_i=\hbar^{-1}(a_i-d_i)$.
So (*) is proved.

Since $R_\hbar(\I)$ is an $\hbar$-saturated ideal, assertion 1
follows from (*) and the trivial observation that $(c_n)$ converges
provided $(a_n)$ does.

Proceed to assertion 2. Clearly, $I$ is dense in the classical part
of $\overline{\I}_\hbar$. It remains to show that the latter is
closed in $\K[[\vf^*]]$. Let $(b_n)$ be a sequence of elements of
$R_\hbar(\I), b_n=\sum_{i=0}^\infty b_n^i \hbar^i,b_n^i\in A,$ such
that $b_i^0-b_j^0\in J_\hbar^i$ for $i<j$. We need to check that,
possibly after replacing $b_n$ with an infinite subsequence, there
are $c_n\in R_\hbar(\I)$ with $(b_i+\hbar c_i)-(b_j+\hbar c_j)\in
J_\hbar^i$ for $i<j$. But this follows from (*) applied to
$a_n=b_n-b_{n+1}$.

Assertion 3 follows easily from Proposition \ref{Prop:1.6} and the
observation that $J_\hbar^n$ does not contain a nonzero element of
degree less than $n$.
\end{proof}

\subsection{Decomposition theorem}\label{SUBSECTION_decomposition}
Recall that we have an action of $\widetilde{G}:=G\times \K^\times\times G_0$ (where $G_0=Z_G(e,h,f)\cap Z_G(h')$)
on $T^*G\cong G\times\g^*$ by $$g.(g_1,\alpha)=(gg_1,\alpha), t.(g_1,\alpha)=(g\gamma(t)^{-1},t^{-2} \gamma(t)\alpha),
g_0(g_1,\alpha)=(g_1g_0^{-1}, g_0\alpha), $$ $$g,g_1\in G,t\in\K^\times,g_0\in G_0,\alpha\in\g^*.$$
The subvariety $X\subset T^*G$ is stable under the action of $\widetilde{G}$.

Set $x=(1,\chi)\in X$. Then $\widetilde{G}x=Gx$ so, in particular, $\widetilde{G}x$ is closed.
The stabilizer $\widetilde{G}_x$ equals $\{(g_0 \gamma(t), t,g_0), t\in \K^\times, g_0\in G_0\}$.
So we may (and will) identify it with $G_0\times\K^\times$.

Recall the vector space $V=[\g,f]$ introduced in Introduction.  

Clearly, $T_xX\subset T_x(T^*G)$
is a symplectic subspace stable w.r.t. $G_0\times\K^\times$. Let us describe its skew-orthogonal complement.
Identify $T_x(T^*G)$ with $\g\oplus\g^*$ by means of the isomorphism $T^*G\cong G\times \g^*$. Then the symplectic
form $\omega_x$ on $\g\oplus\g^*$ is given by
$$\omega_x(\xi+\alpha,\eta+\beta)=\langle \chi,[\xi,\eta]\rangle-\langle\xi,\beta\rangle+\langle\eta,\alpha\rangle,\xi,\eta\in\g, \alpha,\beta\in\g^*,$$
and the $\widetilde{G}_x=G_0\times\K^\times$-action is given by $g_0.(\xi,\alpha)=(g_0\xi,g_0\alpha),
t.(\xi,\alpha)=(\gamma(t)\xi, t^{-2}\gamma(t)\alpha)$. Under the identification $T_x(T^*G)\cong \g\oplus\g^*$
we have $T_xX=\g\oplus (\g/[\g,f])^*$. So $(T_xX)^\skewperp=\{(\eta,\eta.\chi), \eta\in [\g,f]\}$. The projection
to the first component identifies $(T_xX)^\skewperp$ with $V=[\g,f]$ equipped with the Kostant-Kirillov symplectic form
$\omega_\chi(\xi,\eta)=\langle\chi,[\xi,\eta]\rangle$, where $G_0$ acts naturally and $\K^\times$ by $t\mapsto \gamma(t)$.
However, it will be convenient for us to identify $(T_xX)^\skewperp$ not with $V$ but with $V^*$, the $\K^\times$-action
on $V$ is now given by $t.v=\gamma(t)^{-1}(v)$ (note that $V\cong V^*$ as symplectic $G_0$-modules but not as
$\K^\times$-modules, since $\K^\times$ does not preserve the symplectic form but rather rescales it). So, finally,
we get an $G_0\times\K^\times$-equivariant symplectomorphism $T_x(T^*G)\rightarrow T_xX\oplus V^*$ denoted by $\psi$.

Consider the Fedosov star-product $*$ on $\K[T^*G][[\hbar]]$
corresponding to a $G\times G\times\K^\times$-invariant connection
(here we consider the usual fiberwise action $\K^\times:T^*G$).
Further, we equip $S(V)[[\hbar]]=\K[V^*][[\hbar]]$ with the
Moyal-Weyl star-product. So we get $G\times G_0$-invariant homogeneous
star-products of degree 2 on $\K[T^*G][[\hbar]],\K[X\times
V^*][[\hbar]]$.

Consider the completions $\K[T^*G]^\wedge_{Gx}, \K[X\times
V^*]^\wedge_{Gx}$ of $\K[T^*G],\K[X\times V^*]$ w.r.t.  the orbits
$Gx\subset T^*G, X\times V^*$. The Fedosov star-products are
differential, so we get the star-products on
$\K[T^*G]^\wedge_{Gx}[[\hbar]], \K[X\times
V^*]^\wedge_{Gx}[[\hbar]]$. It turns out that these two algebras are
$\widetilde{G}$-equivariantly isomorphic. Moreover, a more
precise statement holds.

\begin{Thm}\label{Thm:3.13}
There is a $\widetilde{G}$-equivariant $\K[\hbar]$-algebra
isomorphism $\Phi_\hbar:\K[T^*G]^\wedge_{Gx}[[\hbar]]$ $\rightarrow
\K[X\times V^*]^\wedge_{Gx}[[\hbar]]$ possessing the following
properties:
\begin{enumerate}
\item Let $\Phi:\K[T^*G]^\wedge_{Gx}\rightarrow \K[X\times
V^*]^\wedge_{Gx}$ be the classical part of $\Phi_\hbar$. Then the
corresponding morphism $\varphi$ of formal schemes maps $x$ to $x$
and $d_x\varphi:T_x(X\times V^*)\rightarrow T_x(T^*G)$ coincides
with $\psi^{-1}$.
\item $\Phi_\hbar(\K[T^*G]^\wedge_{Gx}[[\hbar^2]])=\K[X\times
V^*]^\wedge_{Gx}[[\hbar^2]]$.
\end{enumerate}
\end{Thm}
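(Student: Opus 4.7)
The plan is to prove the theorem in two stages: first construct a $\widetilde{G}$-equivariant symplectomorphism $\varphi:(X\times V^*)^\wedge_{Gx}\to (T^*G)^\wedge_{Gx}$ realizing $\psi^{-1}$ on tangent spaces at $x$, and then lift this classical isomorphism to a quantum one by invoking Fedosov's uniqueness of star-products up to equivariant equivalence (Proposition \ref{Prop:1.5} together with Remark \ref{Rem:1.51}). Property (2) will be automatic throughout, since every object in sight carries a homogeneous degree-$2$ grading which forces only even powers of $\hbar$ to appear.

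For the classical step, the key observation is that $\widetilde{G}_x=G_0\times\K^\times$ is reductive and the orbit $Gx$ is closed in both $T^*G$ and $X\times V^*$, with the same $\widetilde{G}$-equivariant normal bundle data prescribed at $x$ via $\psi$. Equivariant linearization along $Gx$ therefore produces a $\widetilde{G}$-equivariant formal isomorphism $\varphi_0:(X\times V^*)^\wedge_{Gx}\to (T^*G)^\wedge_{Gx}$ with $\varphi_0(x)=x$ and $d_x\varphi_0=\psi^{-1}$, although $\varphi_0$ need not be symplectic. Setting $\omega_1:=\varphi_0^*\omega_{T^*G}$ and letting $\omega_2$ denote the symplectic form on $X\times V^*$, both are $\widetilde{G}$-invariant closed $2$-forms which agree at $x$ by construction and hence, by $\widetilde{G}$-equivariance, along all of $Gx$. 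I would then apply an equivariant Moser argument to the interpolation $\omega_t:=(1-t)\omega_2+t\omega_1$: since $Gx$ is a single $\widetilde{G}$-orbit with reductive stabilizer and the formal neighborhood is $\widetilde{G}$-equivariantly identified with the completion of the normal bundle along its zero section, the $\widetilde{G}$-equivariant de Rham cohomology of the formal neighborhood vanishes in positive degree, so $\omega_1-\omega_2=d\alpha$ for some $\widetilde{G}$-invariant $1$-form $\alpha$ vanishing on $Gx$. Integrating the time-dependent vector field $X_t$ defined by $\iota_{X_t}\omega_t=-\alpha$ produces a $\widetilde{G}$-equivariant formal symplectomorphism, and composing with $\varphi_0$ yields the desired $\varphi$.

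For the quantum step, pullback by $\varphi$ provides a $\widetilde{G}$-equivariant continuous filtered Poisson isomorphism $\Phi:\K[T^*G]^\wedge_{Gx}\to\K[X\times V^*]^\wedge_{Gx}$. Extending $\K[[\hbar]]$-linearly and transporting the Fedosov star-product $*$ on $\K[T^*G]^\wedge_{Gx}[[\hbar]]$ produces a $\widetilde{G}$-invariant homogeneous star-product $*'$ on $\K[X\times V^*]^\wedge_{Gx}[[\hbar]]$ which, by functoriality of the Fedosov construction under symplectomorphisms, is itself a Fedosov star-product arising from some $\widetilde{G}$-invariant symplectic connection. On the other hand, the tensor product of the Fedosov star-product on $\K[X]^\wedge_{Gx}[[\hbar]]$ (from the connection chosen in Subsection \ref{SUBSECTION_isomorphism}) with the Moyal--Weyl star-product on $\K[V^*][[\hbar]]$ also arises from a $\widetilde{G}$-invariant symplectic connection on $X\times V^*$, so Proposition \ref{Prop:1.5} combined with Remark \ref{Rem:1.51} produces a $\widetilde{G}$-equivariant differential equivalence $T$ between $*'$ and this natural star-product. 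The composition $\Phi_\hbar:=T\circ\Phi$, with $\Phi$ extended $\K[[\hbar]]$-linearly, is the required isomorphism: property (1) holds because the classical part of $T$ is the identity, and property (2) holds because both Fedosov and Moyal--Weyl products of degree $2$ involve only even powers of $\hbar$, and $T$, being an equivalence between two homogeneous degree-$2$ star-products, likewise carries only even $\hbar$-powers. The main obstacle is the equivariant Moser argument in the classical step, since one must verify equivariant cohomology vanishing on a formal neighborhood and choose $\alpha$ vanishing to sufficient order on $Gx$ so that the Moser flow is well-defined there; both ultimately reduce to the reductivity of $\widetilde{G}_x$ and the linearization of the formal neighborhood of $Gx$.
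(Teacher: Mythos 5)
Your proposal follows essentially the same route as the paper: the paper obtains the classical $\widetilde{G}$-equivariant isomorphism from the Luna slice theorem (your equivariant linearization), corrects it to a symplectomorphism by citing the algebraic equivariant Darboux theorem of Knop (whose proof is exactly the equivariant Moser argument you spell out), and then lifts to the quantum level via Proposition \ref{Prop:1.5} and Remark \ref{Rem:1.51}, with property (2) coming from homogeneity of the degree-$2$ star-products. The argument is correct.
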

\begin{proof}
Set $H:=\widetilde{G}_x$.  Clearly, $\psi$ induces an $H$-equivariant
isomorphism of the normal spaces $T_x(T^*G)/\g_*x, T_x(X\times
V^*)/\g_*x$. By the  Luna slice theorem, \cite{VP}, $\S$6, there is
a $\widetilde{G}$-equivariant isomorphism
$\Phi:\K[T^*G]^\wedge_{Gx}\rightarrow \K[X\times V^*]^\wedge_{Gx}$
with property (1). Now we can use the argument of the proof of the
equivariant Darboux theorem (for example, its algebraic version, see
\cite{Knop2}, Theorem 5.1), to show that one can modify $\Phi$ such
that the dual morphism of formal schemes becomes a symplectomorphism.
This modification does not affect property (1). The existence of a
$\widetilde{G}$-equivariant isomorphism
$\Phi_\hbar:\K[T^*G]^\wedge_{Gx}[[\hbar]]\rightarrow \K[X\times
V^*]^\wedge_{Gx}[[\hbar]]$ satisfying (1),(2) follows from
Proposition \ref{Prop:1.5} (where $G$ is replaced with $G\times G_0$).
\end{proof}

Consider the completions $\K[\g^*]^\wedge_\chi, \K[S\times
V^*]^\wedge_\chi$. Clearly,
$\K[\g^*]^\wedge_\chi=(\K[T^*G]_{Gx}^\wedge)^G, \K[S\times
V^*]^\wedge_\chi=(\K[X\times V^*]^\wedge_{Gx})^G$. Therefore the
restriction of $\Phi_\hbar$ to $\K[\g^*]^\wedge_\chi[[\hbar]]$ is an
isomorphism of the quantum algebras
$\K[\g^*]^\wedge_\chi[[\hbar]]\rightarrow\K[S\times
V^*]^\wedge_\chi[[\hbar]]$.

Set $\vf_1:=\{\xi-\langle \chi,\xi\rangle,\xi\in\g\},
A_1:=S(\vf_1)$. Construct the algebra $\A_1$ as in Example
\ref{Ex:3.63}. By that example,  $\A_1$ satisfies the conditions of
Lemma \ref{Lem:1.44}.

Similarly,  set $\vf_2:=V\oplus \z_\g(e), A_2:=S(\vf_2)=S(V)\otimes
\K[S],\A_2:=\W_V\otimes\Walg$ (see Example \ref{Ex:3.62} and the
construction of $\Walg$ in Subsection \ref{SUBSECTION_isomorphism}).
Note that $\A_2$ satisfies the assumption of Lemma \ref{Lem:1.44}.
Indeed, it is enough to check the analogous claim for the factor
$\Walg\subset \A_2$. Here it follows easily from the observation
that the grading on $\K[S]$ is positive.

So we can construct the algebras
$A_1^\heartsuit,A_2^\heartsuit,\A_1^\heartsuit,\A_2^\heartsuit$ as
in Subsection \ref{SUBSECTION_completion}. Construct the left ideals
$\I_1^\heartsuit(k),\I_2^\heartsuit(k)$ for the subspaces
$\m'\subset\vf_1,\underline{\m}\subset\vf_2$. Let
$\A_1^\wedge,\A_2^\wedge$ be the corresponding completions.

\begin{Cor}\label{Cor:3.14}
There is an isomorphism $\Phi:\A_1^\heartsuit\rightarrow
\A_2^\heartsuit$ of filtered algebras having the following
properties:
\begin{enumerate}
\item $\Phi(\I_1^\heartsuit(1))=\I_2^\heartsuit(1)$.
\item The systems of subspaces
$\Phi(\I_1^\heartsuit(k)),\I_2^\heartsuit(k)$ are compatible (in the
sense of Lemma \ref{Lem:1.44}).
\end{enumerate}
\end{Cor}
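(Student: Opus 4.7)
The plan is to derive $\Phi$ from the $\widetilde{G}$-equivariant quantum isomorphism $\Phi_\hbar$ of Theorem \ref{Thm:3.13} by taking $G$-invariants, passing to the $\K^\times$-finite part, and specializing at $\hbar=1$; then to verify properties (1) and (2) via a classical analysis of the symplectomorphism $\psi$ together with Lemma \ref{Lem:1.43}.

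For the construction of $\Phi$: take $G$-invariants of $\Phi_\hbar$. Since $(\K[T^*G]^\wedge_{Gx})^G=\K[\g^*]^\wedge_\chi$ and $(\K[X\times V^*]^\wedge_{Gx})^G=\K[S\times V^*]^\wedge_{(\chi,0)}$, we obtain a $G_0\times\K^\times$-equivariant $\K[\hbar]$-algebra isomorphism of the corresponding quantum algebras. Identify $\K[\g^*]^\wedge_\chi$ with $\K[[\vf_1^*]]$ by sending the origin of $\vf_1^*$ to $\chi$, and similarly $\K[S\times V^*]^\wedge_{(\chi,0)}$ with $\K[[\vf_2^*]]$. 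Restrict $\Phi_\hbar$ to the $\K^\times$-finite parts $\K[[\vf_i^*,\hbar]]_{\K^\times-fin}$; by Lemma \ref{Lem:3.61}(2) these coincide with the Rees algebras $R_\hbar(\A_i^\heartsuit)$. Specializing at $\hbar=1$ produces the desired map $\Phi:\A_1^\heartsuit\to\A_2^\heartsuit$. Because $\Phi_\hbar$ is $\K^\times$-equivariant and preserves the Rees grading, $\Phi$ is an isomorphism of filtered algebras.

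For property (1), first apply Lemma \ref{Lem:1.43} with $\widetilde{\m}=\m'$ (resp.\ $\underline{\m}$) and $u_i=0$ to get $\A_1^\heartsuit\m'\subset\I_1^\heartsuit(1)$ and $\A_2^\heartsuit\underline{\m}\subset\I_2^\heartsuit(1)$; combined with the evident reverse inclusions coming from the monomial expansion (\ref{eq:3.12}), this gives $\I_1^\heartsuit(1)=\A_1^\heartsuit\m'$ and $\I_2^\heartsuit(1)=\A_2^\heartsuit\underline{\m}$. Next, analyze the classical part of $\Phi$ on $\m'\subset\vf_1$: by Theorem \ref{Thm:3.13}(1) and the explicit form of $\psi$, for $\xi\in\m\subset V\subset\g$ the linear part (in $\vf_2$) of $\Phi(\xi-\langle\chi,\xi\rangle)$ lies in $\underline{\m}\subset\vf_2$, while the remainder lies in $(\F_d\A_2^\heartsuit\cap\vf_2^2 A_2^\heartsuit)+\F_{d-2}\A_2^\heartsuit$, with $d$ the Kazhdan degree of the element. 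Thus $\widetilde{\m}:=\Phi(\m')$ satisfies the hypothesis of Lemma \ref{Lem:1.43} with the $v_i$'s forming a basis of $\underline{\m}$, and the lemma gives $\Phi(\I_1^\heartsuit(1))=\A_2^\heartsuit\widetilde{\m}\subset\I_2^\heartsuit(1)$. The reverse inclusion follows by running the analogous analysis on $\Phi^{-1}(\underline{\m})\subset\A_1^\heartsuit$.

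Property (2) then follows from Lemma \ref{Lem:1.43} at higher powers. For any $k$ there is $l=l(k)$ with $\A_2^\heartsuit\widetilde{\m}^l\subset\I_2^\heartsuit(k)$. The monomial expansion directly shows $\I_1^\heartsuit(l)\subset\A_1^\heartsuit(\m')^l$, and hence $\Phi(\I_1^\heartsuit(l))\subset\Phi(\A_1^\heartsuit(\m')^l)=\A_2^\heartsuit\widetilde{\m}^l\subset\I_2^\heartsuit(k)$; the opposite cofinality direction comes from the symmetric argument with $\Phi^{-1}$. The principal obstacle is the classical linear-algebra computation in property (1): one must trace through the decomposition $T_x(T^*G)=T_xX\oplus(T_xX)^\skewperp\cong T_xX\oplus V^*$ and use the nondegeneracy of $\omega_\chi$ on $V$ to verify that $\m\subset V$ is sent into $\underline{\m}$ modulo second-order terms at $(\chi,0)$.
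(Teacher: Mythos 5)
Your proof follows the paper's argument essentially verbatim: $\Phi$ is obtained from $\Phi_\hbar$ by restricting to $G$-invariants and to the $\K^\times$-finite parts (identified with the Rees algebras $R_\hbar(\A_i^\heartsuit)$ via Lemma \ref{Lem:3.61}) and setting $\hbar=1$, and properties (1) and (2) are then deduced from the estimate $\Phi(v)-\psi(v)\in\F_{i-2}\A_2^\heartsuit+(\F_i\A_2^\heartsuit\cap\vf_2^2A_2^\heartsuit)$ for $v\in\vf_1(i)$ together with Lemma \ref{Lem:1.43} applied to the two pairs $\Phi(\m'),\underline{\m}$ and $\m',\Phi^{-1}(\underline{\m})$, exactly as in the paper. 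The one citation to adjust: the fact that the non-quadratic part of the remainder lies in $\F_{d-2}$ rather than merely $\F_{d-1}$ --- which is what Lemma \ref{Lem:1.43} actually requires --- comes from condition (2) of Theorem \ref{Thm:3.13} (preservation of $\K[[\hbar^2]]$, forcing the $\hbar$-corrections to drop the filtration degree by even steps), not from condition (1) alone.
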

\begin{proof}
Note that $\Phi_\hbar$ maps
$\K[\g^*]^\wedge_\chi[[\hbar]]_{\K^\times-fin}$ to $\K[S\times
V^*]^\wedge_\chi[[\hbar]]_{\K^\times-fin}$. By Lemma \ref{Lem:3.61},
$R_\hbar(\A_1^\heartsuit)=\K[\g^*]^\wedge_\chi[[\hbar]]_{\K^\times-fin},
R_\hbar(\A_2^\heartsuit)=\K[S\times V^*]^\wedge_\chi[[\hbar]]_{\K^\times-fin}$.
For $\Phi$ we take the isomorphism $\A_1^\heartsuit\rightarrow \A^\heartsuit_2$ induced by
$\Phi_\hbar$.  By conditions (1),(2) of Theorem \ref{Thm:3.13}, if
$v\in \vf_1(i)$, then $\Phi(v)-\psi(v)\in \F_{i-2}\A_2^\heartsuit+(\F_i\A^\heartsuit\cap
\vf_2^2 S(\vf_2))$. Conditions (1),(2) are now  deduced from Lemma
\ref{Lem:1.43} applied to the pairs
$\m',\Phi^{-1}(\underline{\m})\subset \A_2^\heartsuit$ and $
\Phi(\m'),\underline{\m}\subset \A_2^\heartsuit$.
\end{proof}

\begin{Cor}\label{Cor:3.21}
There is an isomorphism of filtered algebras
$\Phi_0:U(\g,e)\rightarrow \Walg$.
\end{Cor}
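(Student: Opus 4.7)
The plan is to descend the filtered algebra isomorphism $\Phi\colon \A_1^\heartsuit \to \A_2^\heartsuit$ of Corollary~\ref{Cor:3.14} to an isomorphism between appropriate invariant subalgebras, which will be identified with $U(\g,e)$ and $\Walg$ respectively.

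First, observe that by Corollary~\ref{Cor:3.14}(1), $\Phi$ carries $\I_1^\heartsuit(1) = \A_1^\heartsuit\m'$ onto $\I_2^\heartsuit(1) = \A_2^\heartsuit\underline{\m}$, and so descends to a filtered isomorphism of left modules
\[
\bar\Phi\colon \A_1^\heartsuit/\A_1^\heartsuit\m' \xrightarrow{\ \sim\ } \A_2^\heartsuit/\A_2^\heartsuit\underline{\m}.
\]
Under the identifications $\A_1 \cong U(\g)$ of Example~\ref{Ex:3.63} and $\A_2 = \W_V \otimes \Walg$, together with PBW (and its Weyl-algebra analogue), one has $\A_1 \cap \A_1^\heartsuit\m' = U(\g)\m'$ and $\A_2 \cap \A_2^\heartsuit\underline{\m} = \W_V\underline{\m}\otimes\Walg$. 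Hence $Q_\y = U(\g)/U(\g)\m'$ and $\A_2/\A_2\underline{\m}$ inject as filtered subspaces into the two completed quotients.

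Next I would take invariants under left multiplication by $\m'$ on one side and by $\Phi(\m')$ on the other. For any $\xi \in \m'$ and any $u$, the identity $\xi u = u\xi + [\xi, u]$ with $u\xi \in \A_1^\heartsuit\m'$ shows that left multiplication by $\m'$ on the quotient agrees with the adjoint action, so its invariants coincide with the $N$-invariants; in particular $(Q_\y)^N = U(\g,e)$ by definition. On the $\A_2$-side, since $\underline{\m}$ is Lagrangian in $V$, the Fock realization $\W_V/\W_V\underline{\m} \cong S(\underline{\m}^*)$ for a complementary Lagrangian $\underline{\m}^*$ makes $\underline{\m}$ act by derivations, yielding $(\A_2/\A_2\underline{\m})^{\underline{\m}} = \K \otimes \Walg = \Walg$. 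Since $\Phi(\m') \subset \A_2^\heartsuit\underline{\m}$, any element killed by $\underline{\m}$ under left multiplication is automatically killed by $\Phi(\m')$; the reverse direction uses the identification of $\Phi(v)$ with $\psi(v)\in\underline{\m}$ modulo $\F_{d_v-2}\A_2^\heartsuit + (\F_{d_v}\A_2^\heartsuit \cap \vf_2^2 S(\vf_2))$ provided in the proof of Corollary~\ref{Cor:3.14}, together with a filtered-leading-order argument.

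The hard part will be showing that the invariants in the \emph{completed} quotients coincide with those in the \emph{polynomial} quotients on each side, i.e., that completion introduces no spurious $\m$-invariants. The plan is to handle this via the $\K^\times$-equivariance inherited from the Kazhdan grading: any $\m'$-invariant element of the completion decomposes into $\K^\times$-weight components bounded above in the $\heartsuit$-direction, and each weight component is finite-dimensional thanks to the positivity of the grading on the associated graded $\K[S]$ common to both sides. Combining these identifications, $\bar\Phi$ restricts to the desired filtered isomorphism $\Phi_0\colon U(\g,e) \to \Walg$, which is an algebra isomorphism because $\bar\Phi$ intertwines left multiplication by the two algebras.
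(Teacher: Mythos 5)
Your proposal is correct and follows essentially the same route as the paper, whose proof is the one-line chain $U(\g,e)\cong(\A_1^\heartsuit/\I_1^\heartsuit(1))^{\I_1^\heartsuit(1)}\cong(\A_2^\heartsuit/\I_2^\heartsuit(1))^{\I_2^\heartsuit(1)}\cong\Walg$ with the middle isomorphism induced by $\Phi$ from Corollary \ref{Cor:3.14}(1); your write-up just unpacks these identifications. The step you flag as hard is in fact already settled in Subsection \ref{SUBSECTION_completion}, where it is shown that $\A^\heartsuit=\A+\I^\heartsuit(k)$ (so the completed and polynomial quotients coincide, by positivity of the degrees of the basis vectors outside $\m$), which is exactly the positivity argument you sketch.
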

\begin{proof}
In the notation of Corollary \ref{Cor:3.14}, there are
isomorphisms of filtered algebras $$U(\g,e)\cong
(\A^\heartsuit_1/\I_1^\heartsuit(1))^{\I_1^\heartsuit(1)}\cong
(\A_2^\heartsuit/\I_2^\heartsuit(1))^{\I_2^\heartsuit(1)}\cong
\Walg.$$ (the middle one is induced by $\Phi$).
\end{proof}

\begin{proof}[Proof of Theorem \ref{Thm:3.3}]
 By Lemma
\ref{Lem:1.44}, $\A_1^\wedge\cong \U^\wedge_{\m'},\A_2^\wedge\cong
\W_V(\Walg)^\wedge_{\underline{\m}}$ (isomorphisms of topological
algebras). By condition (2) of Corollary \ref{Cor:3.14}, $\Phi$ can
be extended to an isomorphism of topological algebras
$\A_1^\wedge\rightarrow \A_2^\wedge$. The equality $\Phi(\J_1)=\J_2$
stems from  (1) of Corollary \ref{Cor:3.14},  for $\J_i$ is the
closure of $\I_i^\heartsuit(1),i=1,2$.
\end{proof}

Let us choose a $\K^\times$-stable lagrangian subspace
$\underline{\m}^*\subset V$ complimentary to $\underline{\m}$.
Choose a homogeneous basis $q_1,\ldots,q_k$ of $\underline{\m}$ and
let $p_1,\ldots,p_k$ be  the dual basis of $\underline{\m}^*$. Let
$e_1,\ldots,e_k, e_1',\ldots,$ $e_k'$ denote the degrees of
$q_1,\ldots,q_k,p_1,\ldots,p_k$. Below we use the following
notation. We write ${\bf i},{\bf j}$ for multiindices ${\bf
i}=(i_1,\ldots,i_k), {\bf j}=(j_1,\ldots,j_k), i_l,j_l\geqslant 0$.
Set $p^{\bf i}:=p_1^{i_1}\ldots p_k^{i_k},q^{\bf j}:=q_1^{j_1}\ldots
q_k^{j_k}$ (the products are taken w.r.t. $\circ$). Any element of
$\W_V(\Walg)^\wedge_{\underline{\m}}$ is uniquely represented in the
form

\begin{equation}\label{eq:3.3.2}a=\sum_{{\bf i},{\bf
j}}a_{{\bf i},{\bf j}}p^{\bf i} q^{\bf j}, a_{{\bf i},{\bf j}}\in
\Walg,\end{equation} where for fixed ${\bf j}$ only finitely many
coefficients $a_{{\bf i},{\bf j}}$ are nonzero.

Let $M$ be a Whittaker $\g$-module. The representation of $\U$ in
$M$ is uniquely extended to a continuous (w.r.t. the discrete
topology on $M$) representation of $\U^\wedge_{\m'}$. By Theorem
\ref{Thm:3.3}, we obtain the representation of $\W_V(\Walg)$ in $M$
such that $\underline{\m}$ acts on $M$ by nilpotent endomorphisms
and $M^{\m'}=M^{\J_1}=M^{\J_2}=M^{\underline{\m}}$.

\begin{Prop}\label{Prop:3.13}
The $\W_V(\Walg)$-modules $M$ and $M^{\m'}\otimes
\K[\underline{\m}]$ are isomorphic. Here $\W_V$ acts on
$\K[\underline{\m}]$ as the algebra of differential operators.
\end{Prop}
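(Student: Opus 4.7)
The plan is to reduce the claim to a Stone--von Neumann type statement for the Weyl algebra $\W_V$. By Theorem~\ref{Thm:3.3}, the action of $\U$ on $M$ extends to $\U^\wedge_{\m'}$ (using that $\m'$ acts locally nilpotently) and hence, via the isomorphism $\Phi$, yields an action of $\W_V(\Walg)^\wedge_{\underline{\m}}$, whose restriction produces the $\W_V(\Walg)$-module structure on $M$. Since $\Phi(\J_1)=\J_2$, we have $M^{\m'}=M^{\underline{\m}}$; in particular $\underline{\m}$ acts locally nilpotently on $M$ through the new structure.

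I would then define $\Phi_M:M^{\m'}\otimes\K[\underline{\m}]\to M$ by $v\otimes p^{\bf i}\mapsto p^{\bf i}v$, computed using the $\W_V(\Walg)$-action on $M$. That this is a $\W_V(\Walg)$-module homomorphism is routine: $\Walg$-equivariance is immediate because $\Walg$ commutes with $\W_V$ inside $\W_V(\Walg)$, so in particular $\Walg$ preserves $M^{\underline{\m}}$; $p_j$-equivariance holds by the definition of the differential-operator action of $\W_V$ on $\K[\underline{\m}]$; and $q_j$-equivariance uses the Weyl relation $[q_j,p^{\bf i}]=\partial p^{\bf i}/\partial p_j$ together with $q_jv=0$ for $v\in M^{\underline{\m}}$.

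The core of the proof is showing $\Phi_M$ is an isomorphism. The key computation, a direct induction from $[q_i,p_j]=\delta_{ij}$, is that for $v\in M^{\underline{\m}}$ and multi-indices with $|{\bf k}|=|{\bf j}|$ one has $q^{\bf k}p^{\bf j}v={\bf j}!\,\delta_{{\bf k},{\bf j}}v$. For injectivity: if $\sum p^{\bf i}v_{\bf i}=0$ and $\bf k$ has maximal degree among nonzero $v_{\bf i}$, applying $q^{\bf k}$ kills the terms with $|{\bf i}|<|{\bf k}|$, while the remaining terms reduce to ${\bf k}!\,v_{\bf k}=0$, a contradiction. For surjectivity, equip $M$ with the filtration $F_rM:=\{m\in M:\underline{\m}^{r+1}m=0\}$, so that $F_0M=M^{\underline{\m}}$ and $\bigcup_rF_rM=M$ by local nilpotency. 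Given $m\in F_rM$, set $v_{\bf j}:=q^{\bf j}m\in M^{\underline{\m}}$ for $|{\bf j}|=r$ and $\widetilde m:=\sum_{|{\bf j}|=r}\frac{1}{{\bf j}!}p^{\bf j}v_{\bf j}$; the key computation yields $q^{\bf k}(m-\widetilde m)=0$ for every $|{\bf k}|=r$, so $m-\widetilde m\in F_{r-1}M$, and an induction on $r$ concludes the argument.

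The main technical point is the bookkeeping in the non-commutative Weyl algebra needed to move multiple $q$'s past multiple $p$'s and establish the identity $q^{\bf k}p^{\bf j}|_{M^{\underline{\m}}}={\bf j}!\,\delta_{{\bf k},{\bf j}}$ for $|{\bf k}|=|{\bf j}|$; once this is in place, both injectivity and surjectivity follow formally, and the $\Walg$-equivariance of $\Phi_M$ is automatic.
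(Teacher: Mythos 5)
Your proof is correct and follows essentially the same route as the paper: you construct the same map $v\otimes p^{\bf i}\mapsto p^{\bf i}v$ from $M^{\underline{\m}}\otimes\K[\underline{\m}]$ to $M$ and show it is an isomorphism using local nilpotency of the $\underline{\m}$-action. The only difference is that where the paper invokes the standard representation theory of Heisenberg Lie algebras (citing Kac, Theorem 3.5) for the bijectivity, you prove that Stone--von Neumann type statement directly via the identity $q^{\bf k}p^{\bf j}|_{M^{\underline{\m}}}={\bf j}!\,\delta_{{\bf k},{\bf j}}$ and the nilpotency filtration; this is precisely the standard proof of the cited result, so the content is the same.
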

\begin{proof}
Recall that $V=\underline{\m}\oplus \underline{\m}^*$. We have the
natural homomorphism $\iota:S(\underline{\m}^*)\otimes
M^{\underline{\m}}\rightarrow M$ of $\Walg\otimes \W_V$-modules,
$p_1\ldots p_k\otimes v\mapsto p_1.\ldots p_k. v$. The map $V\rightarrow \operatorname{End}(M)$
extends to a representation of the Heisenberg Lie algebra associated to $V$. The lagrangian subspace
$\underline{\m}\subset V$ acts on $M$ by locally nilpotent endomorphisms. Now the representation theory
of Heisenberg Lie algebras implies that $\iota$ is an isomorphism,  a proof (in a more
general situation) is given, for example, in \cite{Kac}, Theorem 3.5.
\end{proof}

For example, $Q_\y=\U/\U\m'\cong \U^\wedge_{\m'}/\J_1$ is a
Whittaker module. Applying Corollary \ref{Cor:3.21}, we can
identify $\U(\g,e)\cong (Q_\y)^{\m'}$  with $\Walg$, so we obtain the
$(\U,\Walg)$-bimodule structure on $Q_\y$. By Proposition
\ref{Prop:3.13}, $Q_\y$ is isomorphic to $ \K[\underline{\m}]\otimes
\Walg$ as a $(\W_V(\Walg),\Walg)$-bimodule.

This observation together with Proposition \ref{Prop:3.13} implies
Theorem \ref{Thm:0.1.1}. Note that Theorem \ref{Thm:0.1.1}
automatically implies the equivalence of the categories of finitely
generated  modules. Indeed, finitely generated is the same as Noetherian and the latter
is a purely categorical notion.

\begin{Prop}\label{Prop:3.41}
Let $M$ be a finitely generated $\Walg$-module. Then
$\Dim_\U(\mathcal{S}(M))=\Dim_{\Walg}(M)+\dim \m$.
\end{Prop}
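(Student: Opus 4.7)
My plan is to compute both Gelfand--Kirillov dimensions using a single Kazhdan filtration on $\mathcal{S}(M)$ coming from the identification in Proposition \ref{Prop:3.13}, and to bridge the $\W_V(\Walg)$- and $\U$-module structures via the decomposition theorem. The upper bound on $\Dim_\U\mathcal{S}(M)$ will follow from a direct Hilbert-series computation with the tensor-product filtration, and the matching lower bound will be read off the semiclassical content of the decomposition at $\chi\in\g^*$.

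In detail, choose a good $\Walg$-Kazhdan filtration $F_\bullet M$, filter $\K[\underline{\m}]=S(\underline{\m}^*)$ by polynomial degree (a good $\W_V$-Kazhdan filtration whose associated graded is a $\K[V^*]$-module supported on a Lagrangian subspace of dimension $\dim\m$), and form the tensor-product filtration $F_k\mathcal{S}(M):=\sum_{i+j\leqslant k}F_i\K[\underline{\m}]\otimes F_jM$ on $\mathcal{S}(M)\cong\K[\underline{\m}]\otimes M$. The associated graded $\gr\mathcal{S}(M)\cong\K[\underline{\m}]\otimes\gr M$ is a finitely generated $\gr\W_V(\Walg)=\K[V^*\times S]$-module supported on a subvariety of dimension $\dim\m+\Dim_{\Walg}M$, and the Hilbert series
$$\dim F_k\mathcal{S}(M)=\sum_{i+j\leqslant k}\binom{\dim\m+i-1}{\dim\m-1}\,\dim(F_jM/F_{j-1}M)$$
has polynomial growth of degree $\dim\m+\Dim_{\Walg}M$. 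The filtered embedding $\U\hookrightarrow\W_V(\Walg)^\wedge_{\underline{\m}}$ supplied by Theorem \ref{Thm:3.3}, refined to the filtered isomorphism $\Phi:\A_1^\heartsuit\to\A_2^\heartsuit$ of Corollary \ref{Cor:3.14}, ensures $F_k\U\cdot F_l\mathcal{S}(M)\subset F_{k+l}\mathcal{S}(M)$; combined with the finite generation $\mathcal{S}(M)=\U\cdot(1\otimes M_0)$ for any finite $\Walg$-generating set $M_0\subset M$, this yields the upper bound $\Dim_\U\mathcal{S}(M)\leqslant\dim\m+\Dim_{\Walg}M$.

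For the matching lower bound I appeal to the semiclassical content of the decomposition. Passing to $\hbar=0$ in Theorem \ref{Thm:3.13} and taking $G$-invariants produces a formal isomorphism $\K[\g^*]^\wedge_\chi\cong\K[S\times V^*]^\wedge_{(\chi,0)}$ under which $\gr\mathcal{S}(M)$ completed at $\chi$ as a $\gr\U$-module corresponds to the same underlying module completed at $(\chi,0)$ as a $\gr\W_V(\Walg)$-module. Hence the local dimension of $\VA(\Ann_\U\mathcal{S}(M))$ at $\chi$ (computed from any good $\U$-filtration) equals the local dimension of $\operatorname{supp}_{V^*\times S}(\K[\underline{\m}]\otimes\gr M)$ at $(\chi,0)$, which is $\dim\m+\Dim_{\Walg}M$. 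Since $\Dim_\U\mathcal{S}(M)$ is the global dimension of the associated variety, which bounds the local dimension at $\chi$ from above, the lower bound follows and the two bounds give the claimed equality.

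The principal obstacle is the semiclassical matching in the third paragraph: one has to verify, tracking the Kazhdan filtration through Corollary \ref{Cor:3.14}, that $\gr\mathcal{S}(M)$ equipped with its two distinct algebra actions genuinely becomes the same completed module via the classical isomorphism $\Phi_0$, so that the support dimensions — and hence the GK dimensions — agree at $\chi$.
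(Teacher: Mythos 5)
Your upper bound is sound in outline and is close in spirit to half of the paper's argument: the paper also computes growth rates of $\dim(\F_n\A_i)M_0$ for a finite generating subspace $M_0\subset M=1\otimes M\subset\mathcal{S}(M)$, and also needs the (omitted in your write-up, but routine) comparison $\F_i\U\subset\F^{st}_i\U\subset\F_{d_1i}\U$ modulo the left ideal, to pass between the Kazhdan filtration, in which $\Phi$ is filtered, and the standard filtration, in which $\Dim_\U$ is defined. The problem is your lower bound. The ``semiclassical matching'' you defer to the last paragraph is not a verification you can postpone: it is the entire content of the proposition. The isomorphism $\Phi_\hbar$ (and its classical part $\varphi$) identifies the two \emph{algebras} after completion, but it does not by itself identify the associated graded of $\mathcal{S}(M)$ taken with respect to a good $\U$-filtration with $\K[\underline{\m}]\otimes\gr M$ taken with respect to the tensor-product filtration; these are a priori two unrelated filtrations on the same vector space, and without comparing them you cannot transport supports through $\varphi$. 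Moreover you conflate $\VA(\Ann_\U\mathcal{S}(M))$ with $\operatorname{supp}(\gr\mathcal{S}(M))$: only the latter computes $\Dim_\U$, and the two can differ in dimension. As written, the lower bound is not proved.

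The paper closes exactly this gap with one short observation that makes the whole support/completion machinery unnecessary. Identify $\A_1^\heartsuit$ and $\A_2^\heartsuit$ via $\Phi$ and write $\A^\heartsuit$, $\I^\heartsuit$ for the common algebra and the left ideal $\I^\heartsuit_j(1)$. Since $\F_j\A^\heartsuit\subset\F_j\A_i+\I^\heartsuit$ for $i=1,2$ and $\I^\heartsuit$ kills $M_0\subset\mathcal{S}(M)^{\underline{\m}}$, one gets the literal equality of subspaces $(\F_j\A_1)M_0=(\F_j\A^\heartsuit)M_0=(\F_j\A_2)M_0$ for every $j$. Hence the growth functions computing $\Dim_\U(\mathcal{S}(M))$ and $\Dim_{\W_V(\Walg)}(\mathcal{S}(M))$ (in Kazhdan terms) are one and the same, and both bounds drop out simultaneously; the Weyl-algebra factor then contributes exactly $\dim\m$ as in your Hilbert-series computation. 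If you want to keep your two-sided structure, replace the third paragraph by this equality of filtered subspaces; otherwise the argument does not go through.
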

\begin{proof}
 We identify $\A_1^\heartsuit,\A_2^\heartsuit$ by means
of $\Phi$ and write $\A^\heartsuit,\I^\heartsuit$ instead of
$\A^\heartsuit_j,\I^\heartsuit_j(1), j=1,2$. There is a finite
dimensional subspace $M_0\subset M$ generating the
$\A^\heartsuit$-module $\mathcal{S}(M)\cong \K[\underline{\m}]\otimes M$.
It is easy to see that $\F_j\A^\heartsuit\subset
\F_j\A_i+\I^\heartsuit,i=1,2,$ for any $j$. Therefore $\dim
(\F_j\A_i)M_0=\dim(\F_j \A^\heartsuit)M_0, i=1,2$. Hence it remains
to prove that
\begin{align}\label{eq:3.2.1}
&\Dim_\U(\mathcal{S}(M))=\lim_{n\rightarrow \infty}\frac{\ln \dim
(\F_n\A_1) M_0}{\ln n},\\\label{eq:3.2.2}
&\Dim_{\W_V(\Walg)}(\mathcal{S}(M))=\lim_{n\rightarrow\infty}\frac{\ln
\dim (\F_n\A_2) M_0}{\ln n}.
\end{align}
To prove (\ref{eq:3.2.1}) we note that the elements
$v_{i_1}\circ\ldots \circ v_{i_k}$, where $i_1\geqslant
i_2\geqslant\ldots\geqslant i_k, v_{i_k}\not\in\m',$ and
$\sum_{j=1}^k d_{i_j}\leqslant i$, form a basis of
$\F_i\A_1/(\F_i\A_1\cap\I_1^\heartsuit(1))$. Note that all degrees
$d_{i_j}$ are positive. Therefore $\F_i\U+(\I_1^\heartsuit(1)\cap
\U)\subset \F_i^{st}\U+(\I_1^\heartsuit(1)\cap \U)\subset
\F_{d_1i}\U+(\I_1^\heartsuit(1)\cap\U)$ (here $\F^{st}_{\bullet}$
denotes the standard filtration on  $\U$). Therefore $(\F_i\U)
M_0\subset (\F^{st}_i\U) M_0\subset (\F_{d_1 i}\U) M_0$ hence
(\ref{eq:3.2.1}).

The proof of (\ref{eq:3.2.2}) is completely analogous.
\end{proof}

\subsection{Correspondence between ideals}\label{SUBSECTION_ideals}
 To simplify
the notation we write $\A(\Walg),\A(\Walg)^\wedge,$ $\U^\wedge$
instead of
$\A_V(\Walg),\A_V(\Walg)^\wedge_{\underline{\m}},\U^\wedge_{\m'}$.
Fix an isomorphism $\Phi_\hbar$ satisfying the assumptions of
Theorem \ref{Thm:3.13} and the isomorphism
$\Phi:\U^\wedge\rightarrow \W(\Walg)^\wedge$ constructed in the
proof of Theorem \ref{Thm:3.3}.

For  $\I\in \Id(\Walg)$ define the ideal $\W(\I)^\wedge\subset
\W(\Walg)^\wedge$ by
$$\W(\I)^\wedge=\varprojlim (\I+ \W(\Walg)\underline{\m}^k)/\W(\Walg)\underline{\m}^k.$$
Alternatively, $\W(\I)^\wedge$ is the closure  of
$\W(\I):=\W(\Walg)\I$ (or $\W(\Walg)^\wedge\I$) in
$\W(\Walg)^\wedge$. Clearly, $\W(\I)^\wedge$ consists of all
elements of the form (\ref{eq:3.3.2}) with $a_{{\bf i},{\bf j}}\in
\I$.

Set $\I^\dagger:=\U\cap \Phi^{-1}(\W(\I)^\wedge)$. This is an ideal
of $\U$.

Below we will need an alternative construction of $\I^\dagger$. By
Lemma \ref{Lem:3.61}, there is a natural identification
$A_i[\hbar]\cong R_\hbar(\A_i)$. So we can consider $R_\hbar(\I)$ as
an ideal  in the quantum algebra $A_2[\hbar]$. Consider the closure
$\overline{\I}_\hbar$ of $R_\hbar(\I)$ in
$\K[S]^\wedge_\chi[[\hbar]]$. The ideal
$\K[[V^*,\hbar]]\widehat{\otimes}_{\K[[\hbar]]}\overline{\I}_\hbar\subset
\K[[\vf_2^*,\hbar]]$ is closed, $\K^\times$-stable and
$\hbar$-saturated. By Proposition \ref{Prop:1.6}, there is a unique
ideal $\I^\ddagger\subset \U$ such that $R_\hbar(\I^\ddagger)=
R_\hbar(\U)\cap
\Phi_\hbar^{-1}(\K[[V^*,\hbar]]\widehat{\otimes}_{\K[[\hbar]]}\overline{\I}_\hbar)$.

\begin{Prop}\label{Prop:3.35}
$\I^\dagger=\I^\ddagger$.
\end{Prop}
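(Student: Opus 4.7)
The plan is to verify that $\I^\dagger$ satisfies the condition that uniquely characterizes $\I^\ddagger$, namely
\[
R_\hbar(\I^\dagger) = R_\hbar(\U) \cap \Phi_\hbar^{-1}\bigl(\K[[V^*,\hbar]] \widehat{\otimes}_{\K[[\hbar]]} \overline{\I}_\hbar\bigr).
\]
Uniqueness will then follow from Proposition \ref{Prop:1.6} once we note that the right-hand side above is $\K^\times$-stable (because $\Phi_\hbar$ is $\widetilde{G}$-equivariant by Theorem \ref{Thm:3.13}) and $\hbar$-saturated (because $\overline{\I}_\hbar$ is, by Proposition \ref{Prop:1.45}(1)), and that taking intersections with $R_\hbar(\U)$ preserves both properties.

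The first, main step is to match the two completions that appear on the $\W(\Walg)$-side: the $\underline{\m}$-adic completion entering the definition of $\W(\I)^\wedge$, and the $\hbar$-adic completion entering $\overline{\I}_\hbar$ and $\K[[V^*,\hbar]] \widehat{\otimes}_{\K[[\hbar]]} \overline{\I}_\hbar$. Concretely, I would show that under the Rees embedding $R_\hbar(\A_2^\heartsuit) \hookrightarrow \K[S \times V^*]^\wedge_\chi[[\hbar]]$ of Lemma \ref{Lem:3.61}(2), one has
\[
R_\hbar\bigl(\W(\I)^\wedge \cap \W(\Walg)^\heartsuit\bigr) \;=\; R_\hbar(\A_2^\heartsuit) \cap \bigl(\K[[V^*,\hbar]] \widehat{\otimes}_{\K[[\hbar]]} \overline{\I}_\hbar\bigr).
\]
For the forward inclusion, expand any $a\in \W(\I)^\wedge$ in the form (\ref{eq:3.3.2}); its coefficients $a_{\mathbf{i},\mathbf{j}}$ lie in $\I$, and the Rees rescaling of Lemma \ref{Lem:3.61}(2) converts the whole series into a sum $\sum p^{\mathbf{i}} q^{\mathbf{j}} \otimes (\hbar^{\deg a_{\mathbf{i},\mathbf{j}}} a_{\mathbf{i},\mathbf{j}})$ in $\K[[V^*,\hbar]] \widehat{\otimes} R_\hbar(\I)$, and hence in $\K[[V^*,\hbar]] \widehat{\otimes} \overline{\I}_\hbar$. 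For the reverse inclusion, an element of the right-hand side, expanded uniquely in the $p^{\mathbf{i}} q^{\mathbf{j}}$-basis, has coefficients forced to lie in $\overline{\I}_\hbar$; by $\hbar$-saturation of $\overline{\I}_\hbar$ together with the classical-part description in Proposition \ref{Prop:1.45}(2), the specialization at $\hbar = 1$ pushes each coefficient into $\I$, so the element lies in $\W(\I)^\wedge$.

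Step two is routine transfer via $\Phi_\hbar$. By the construction of Corollary \ref{Cor:3.14}, $\Phi_\hbar$ restricts to an isomorphism $R_\hbar(\A_1^\heartsuit) \to R_\hbar(\A_2^\heartsuit)$ whose $\hbar\mapsto 1$ specialization is $\Phi$. Pulling back the identification of step one through $\Phi_\hbar$ and intersecting with $R_\hbar(\U) \subset R_\hbar(\A_1^\heartsuit)$, and using that $R_\hbar$ commutes with intersections with a filtered subalgebra, yields
\[
R_\hbar(\U) \cap \Phi_\hbar^{-1}\bigl(\K[[V^*,\hbar]] \widehat{\otimes} \overline{\I}_\hbar\bigr) \;=\; R_\hbar\bigl(\U \cap \Phi^{-1}(\W(\I)^\wedge)\bigr) \;=\; R_\hbar(\I^\dagger),
\]
as required.

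The principal obstacle is the technical matching in step one. The $\underline{\m}$-adic topology measures depth of nesting of $\m$-factors on the right, while the $\hbar$-adic topology measures powers of $\hbar$; the bridge is precisely the Rees rescaling of Lemma \ref{Lem:3.61}(2), which packages a homogeneous element of Kazhdan degree $i$ with the factor $\hbar^i$, so high $\underline{\m}$-depth entails high $\hbar$-power. Combining this bookkeeping with uniqueness of the expansion (\ref{eq:3.3.2}) and $\hbar$-saturation of $\overline{\I}_\hbar$ (Proposition \ref{Prop:1.45}) is what makes the two sides line up exactly.
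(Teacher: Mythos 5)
Your proposal is correct and follows essentially the same route as the paper: the heart of both arguments is the identification, via the unique expansion in the monomials $p^{\bf i}q^{\bf j}$ together with the Rees-algebra dictionary of Lemma \ref{Lem:3.61} and the nonnegativity of the Kazhdan grading on $\K[S]$, of $\A_2^\heartsuit\cap\W(\I)^\wedge$ with the $\hbar\mapsto 1$ specialization of $R_\hbar(\A_2^\heartsuit)\cap\bigl(\K[[V^*,\hbar]]\widehat{\otimes}_{\K[[\hbar]]}\overline{\I}_\hbar\bigr)$ (the paper's equation (\ref{eq:3.3.4})), followed by transfer through $\Phi_\hbar$ and Proposition \ref{Prop:1.6}. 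The paper merely packages this as two separate inclusions rather than as a single Rees-algebra identity, so no substantive difference.
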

\begin{proof}
Set
$\overline{\J}_\hbar:=\K[[V^*,\hbar]]\widehat{\otimes}_{\K[[\hbar]]}\overline{\I}_\hbar$.
Let  $\A_{2\hbar}^\wedge\subset \K[[\vf_2^*,\hbar]]$ be such as in
Remark \ref{Rem:1.42}. Recall that we have the following commutative
diagram.

\begin{picture}(100,30)
\put(3,2){$\A_2$}
\put(1,22){$R_\hbar(\A_2)$}\put(32,2){$\A_2^\heartsuit$}\put(62,2){$\A_2^\wedge$}
\put(30,22){$R_\hbar(\A_2^\heartsuit)$}\put(62,22){$\A^\wedge_{2\hbar}$}
\put(92,22){$\K[[\vf^*_2,\hbar]]$} \put(5,20){\vector(0,-1){12}}
\put(34,20){\vector(0,-1){12}} \put(64,20){\vector(0,-1){12}}
\put(8,4){\vector(1,0){22}} \put(37,4){\vector(1,0){26}}
\put(15,23){\vector(1,0){15}} \put(45,23){\vector(1,0){15}}
\put(69,23){\vector(1,0){22}}
\end{picture}

Here all horizontal arrows are natural embeddings. All vertical
arrows are quotients by the ideals generated by $\hbar-1$, we denote
these epimorphisms by $\pi$.

Any element  $a\in \K[[\vf_2^*,\hbar]]$ is uniquely represented as a
series
\begin{equation}\label{eq:3.3.3}
a=\sum_{{\bf i},{\bf j}}a_{{\bf i},{\bf j}}p^{*{\bf i}}*q^{*{\bf
j}}, a_{{\bf i},{\bf j}}\in
\K[S]^\wedge_\chi[[\hbar]],\end{equation} where $p^{*{\bf
i}},q^{*{\bf j}}$ have an analogous meaning to $p^{\bf i},q^{\bf
j}$, but the product is taken w.r.t. $*$ instead of $\circ$.

 By Remark \ref{Rem:1.42}, an
element (\ref{eq:3.3.3}) lies in $\A_{2\hbar}^\wedge$ iff all
coefficients $a_{{\bf i},{\bf j}}$ lie in
$R_\hbar(\Walg)=\K[S][\hbar]$ and for given ${\bf j}$ only finitely
many of them are nonzero. Therefore $\pi(\A_{2\hbar}^\wedge\cap
\overline{\J}_\hbar)=\W(\I)^\wedge$.


So $\Phi(\I^\ddagger)=\pi\left(\Phi_\hbar\left(R_\hbar(\U)\right)\cap
\overline{\J}_\hbar\right)\subset
 \Phi(\U)\cap \pi(\A_{2\hbar}^\wedge\cap \overline{\J}_\hbar)=\Phi(\U)\cap\W(\I)^\wedge=
\Phi(\I^\dagger)$ hence $\I^\ddagger\subset\I^\dagger$.  To verify
the opposite inclusion we need to check that $R_\hbar(\Phi(\U)\cap
\W(\I)^\wedge)\subset \overline{\J}_\hbar$. Since $\Phi(\U)\subset
\A_2^\heartsuit$ by the construction of $\Phi$, it is enough to show
that $R_\hbar(\A_2^\heartsuit\cap \W(\I)^\wedge)\subset
R_\hbar(\A_2^\heartsuit)\cap \overline{\J}_\hbar$. Both these ideals
of $R_\hbar(\A_2^\heartsuit)$ are $\K^\times$-stable and
$\hbar$-saturated. Thanks to Proposition \ref{Prop:1.6}, we need to
check only that
\begin{equation}\label{eq:3.3.4}\A_2^\heartsuit\cap \W(\I)^\wedge=
\pi(R_\hbar(\A^\heartsuit_2)\cap \overline{\J}_\hbar).\end{equation}
The l.h.s. of (\ref{eq:3.3.4}) consists of all infinite sums of the
form $\sum_{{\bf i},{\bf j}}a_{{\bf i},{\bf j}}p^{\bf i}q^{\bf j}$,
where $a_{{\bf i},{\bf j}}\in\I$ and there is $c$ such that $a_{{\bf
i},{\bf j}}\in\F_{c-\sum_{l=1}^k (e_l'i_l+e_lj_l)}\Walg.$ The ideal
$R_\hbar(\A_2^\heartsuit)\cap \overline{\J}_\hbar$ consists of all
infinite sums $\sum_{{\bf i},{\bf j}}a_{{\bf i},{\bf j}}p^{*{\bf
i}}*q^{*{\bf j}}$, where $a_{{\bf i},{\bf j}}$ is a homogeneous
element of $R_\hbar(\I)$ of degree, say, $e_{{\bf i},{\bf j}}$ and
the set $\{e_{{\bf i},{\bf j}}+\sum_{l=1}^k (e_l'i_l+e_lj_l)|
a_{{\bf i},{\bf j}}\neq 0\}$ is finite. Since $e_{{\bf i},{\bf
j}}\geqslant 0$, (\ref{eq:3.3.4}) follows.
\end{proof}

\begin{proof}[Proof of assertion (i)-(iv) of Theorem \ref{Thm:0.2.2}]
(i) is obvious.  To prove (ii) let us note that
$\W(\Ann_{\Walg}(N))^\wedge=\Ann_{\W(\Walg)^\wedge}(\K[\m]\otimes N)$ and
use  Proposition \ref{Prop:3.13}.

Let us prove (iii). Choose $z\in \Centr(\g)$. By definition,
$\I\cap\iota(\Centr(\g))=\Ann_{\iota(\Centr(\g))}(\Walg/\I)$,
$\mathcal{S}(\Walg/\I)=Q_\y/Q_\y \I$ hence
$\I\cap\iota(\Centr(\g))\subset\Ann_{\iota(\Centr(\g))}(Q_\y/Q_\y
\I)$. By (ii), $\I^\dagger=\Ann(Q_\y/\I Q_\y)$ hence
$\I^\dagger\cap \Centr(\g)=\Ann_{\Centr(\g)}(Q_\y/ Q_\y \I)$. Thanks
to Theorem \ref{Thm:0.1.1}, $\Walg/\I\cong (Q_\y/Q_\y \I)^{\m'}$.
Therefore $\I^\dagger\cap \Centr(\g)\subset
\Ann_{\Centr(\g)}\Walg/\I$. Clearly, $z$ and $\iota(z)$ coincide on
$Q_\y/\I Q_\y,\Walg/\I$. This implies (iii).

Proceed to (iv). First of all, let us check that $\W(\I)^\wedge$ is
prime. Indeed, let $\J_1,\J_2$ be ideals of $\W(\Walg)^\wedge$ such
that $\W(\I)^\wedge\subset \J_1,\J_2,\J_1\J_2\subset \W(\I)^\wedge$.
Replacing $\J_1,\J_2$ with their closures, we may assume that
$\J_1,\J_2$ are closed. It is easy to show (compare with Lemma
\ref{Lem:3.62} below) that $\J_i=\W(\I_i)^\wedge,i=1,2,$ for a
unique ideal $\I_i$ of $\Walg$. We get $\I_1\I_2\subset \Walg\cap
\W(\I)^\wedge=\I$. It follows that $\I_1=\I$ or $\I_2=\I$.

 Let $a,b\in\U$ be such
that $a\U b\subset \I^\dagger$.  But $\Phi(a)\W(\Walg)^\wedge
\Phi(b)$ lies in the closure of $\Phi(a\U b)$ in $\W(\Walg)^\wedge$.
Therefore $\Phi(a)\W(\Walg)^\wedge \Phi(b)\subset \W(\I)^\wedge$. So
$a\in \I^\dagger$ or $b\in \I^\dagger$. In other words, $\I^\dagger$
is prime. By (iii), $\I^\dagger\subset \Pr(\U)$.
\end{proof}

Now let us construct a map $\Id(\U)\rightarrow \Id(\Walg)$. Recall,
Lemma \ref{Lem:3.61}, that the algebras $A_1[\hbar]$ and
$R_\hbar(\U)$ are naturally identified.   Choose $\J\in \Id(\U)$. By
$\overline{\J}_\hbar$ we denote the closure of $R_\hbar(\J)$ in
$\K[[\vf_1^*,\hbar]]$. By assertion 1 of Proposition
\ref{Prop:1.45}, the ideal $\overline{\J}_\hbar$ is
$\hbar$-saturated. Therefore $\Phi_\hbar(\overline{\J}_\hbar)\cap
\K[S]^\wedge_\chi[[\hbar]]$ is $\K^\times$-stable and
$\hbar$-saturated. By assertion 3 of Proposition \ref{Prop:1.45},
there is a uniquely determined element $\J_\dagger\in \Id(\Walg)$
such that $R_\hbar(\J_\dagger)=\Phi_\hbar(\overline{\J}_\hbar)\cap
R_\hbar(\Walg)$. Note that $(\J_1)_\dagger\subset (\J_2)_\dagger$
provided $\J_1\subset \J_2$.

The following proposition is the main property of the map $\J\mapsto
\J_\dagger$.

\begin{Prop}\label{Prop:3.24}
Let $\J\in \Id(\U)$. Then $\gr \J_\dagger=(\gr \J+ I(S))/I(S)$,
where  $I(S)$ denotes the ideal in $S(\g)\cong \K[\g^*]$ consisting
of all functions  vanishing on $S$.
\end{Prop}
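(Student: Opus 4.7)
The strategy is to exploit the Rees-algebra definition
\[
R_\hbar(\J_\dagger)=\Phi_\hbar(\overline{\J}_\hbar)\cap R_\hbar(\Walg)
\]
and analyse what happens on reduction modulo $\hbar$. Write $J:=\gr\J\subset\K[\g^*]$ and $J_S:=(J+I(S))/I(S)\subset\K[S]$. The two basic tools will be Proposition \ref{Prop:1.45}(2), which identifies the classical part of $\overline{\J}_\hbar$ as the closure $\widehat{J}$ of $J$ in $\K[\g^*]^\wedge_\chi$, and Theorem \ref{Thm:3.13}(1), which shows that the classical part $\Phi_0$ of $\Phi_\hbar$ carries the slice $S\subset\g^*$ (in the formal neighbourhood of $\chi$) onto $S\times\{0\}\subset S\times V^*$. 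Consequently, $\Phi_0$ intertwines the restriction $r_1\colon\K[\g^*]^\wedge_\chi\to\K[S]^\wedge_\chi$ with the evaluation $r_2\colon\K[S\times V^*]^\wedge_\chi\to\K[S]^\wedge_\chi$ at $V^*=0$, and $r_2$ is the identity on the subring $\K[S]\subset\K[S\times V^*]^\wedge_\chi$.

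For the inclusion $\gr\J_\dagger\subseteq J_S$ the argument is direct. Take $a\in\gr\J_\dagger$ and lift it to $b\in R_\hbar(\J_\dagger)$ with $b\equiv a\pmod{\hbar}$. From $b\in\Phi_\hbar(\overline{\J}_\hbar)$ one obtains $\Phi_0^{-1}(a)\in\widehat{J}$. Applying $r_1$ and using $r_2\circ\Phi_0=r_1$ together with $r_2(a)=a$ yields $a=r_1(\Phi_0^{-1}(a))\in r_1(\widehat{J})=\widehat{J_S}$, where continuity and surjectivity of $r_1$ provide the last equality. Since $\K[S]$ is positively Kazhdan-graded with $\chi$ at the origin and $J_S$ is a homogeneous ideal, $\widehat{J_S}\cap\K[S]=J_S$, giving $a\in J_S$.

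For the reverse inclusion $J_S\subseteq\gr\J_\dagger$: take $a=j|_S$ with $j\in J$ homogeneous of Kazhdan degree $d$, and lift $j$ to $\widetilde j\in\F_d\J$. Then $\hbar^d\widetilde j\in R_\hbar(\J)$ is $\K^\times$-homogeneous of degree $d$, and $\Phi_\hbar(\hbar^d\widetilde j)\in\Phi_\hbar(\overline{\J}_\hbar)$ admits the Moyal--Weyl expansion (\ref{eq:3.3.3}) adapted to $V^*=\underline{\m}\oplus\underline{\m}^*$:
\[
\Phi_\hbar(\hbar^d\widetilde j)=\sum_{\mathbf i,\mathbf k}c_{\mathbf i,\mathbf k}(\hbar)\,p^{*\mathbf i}*q^{*\mathbf k},\quad c_{\mathbf i,\mathbf k}\in\K[S]^\wedge_\chi[[\hbar]].
\]
Each $c_{\mathbf i,\mathbf k}(\hbar)$ is $\K^\times$-homogeneous; in particular $c_{0,0}(\hbar)$ has Kazhdan degree $d$, and positivity of the grading on $\K[S]$ forces $c_{0,0}(\hbar)\in\K[S][\hbar]=R_\hbar(\Walg)$. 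Its classical part equals $r_2(\Phi_0(j))=r_1(j)=a$.

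The main obstacle is to show $c_{0,0}(\hbar)\in\Phi_\hbar(\overline{\J}_\hbar)$, which will then force $c_{0,0}(\hbar)\in R_\hbar(\J_\dagger)$ and hence $a\in\gr\J_\dagger$. The plan is to mimic the proof of Proposition \ref{Prop:3.35}, iteratively stripping off the components $(\mathbf i,\mathbf k)\neq(0,0)$ of $\Phi_\hbar(\hbar^d\widetilde j)$ by subtracting images under $\Phi_\hbar$ of suitable $\U$-translates of $\widetilde j$; this exploits that the ideal $\Phi_\hbar(\overline{\J}_\hbar)$ is two-sided, $\hbar$-saturated and $\K^\times$-stable. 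Convergence of the procedure is controlled by the Kazhdan grading combined with $\hbar$-saturation, and the limit $c_{0,0}(\hbar)$ remains in the closed ideal $\Phi_\hbar(\overline{\J}_\hbar)$, as required.
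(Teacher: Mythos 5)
Your overall strategy---pass to Rees algebras, reduce modulo $\hbar$, and use positivity of the Kazhdan grading on $\K[S]$ to descend from completions to the ideals themselves---is the right one and matches the paper's in spirit. However, the argument has a genuine gap exactly at the step you yourself flag as ``the main obstacle'': showing that the coefficient $c_{0,0}(\hbar)$ of an element of the closed ideal $\Phi_\hbar(\overline{\J}_\hbar)$ again lies in that ideal. This is the entire content of the reverse inclusion, and what you offer is a plan, not a proof. The paper isolates precisely this point as Lemma \ref{Lem:3.62}: any closed, $\hbar$-saturated, $\K^\times$-stable two-sided ideal of the quantum algebra $\K[[\vf_2^*,\hbar]]$ equals $\K[[V^*,\hbar]]\widehat{\otimes}_{\K[[\hbar]]}\overline{\I}_\hbar$ with $\overline{\I}_\hbar$ its intersection with $\K[S]^\wedge_\chi[[\hbar]]$. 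The proof is a concrete eigenvalue trick: the operators $a\mapsto \hbar^{-2}p_l*(q_l*a-a*q_l)$ and $a\mapsto \hbar^{-2}q_l*(p_l*a-a*p_l)$ preserve the ideal (because it is two-sided and $\hbar$-saturated) and multiply the $({\bf i},{\bf j})$-component of the monomial expansion by $i_l$, resp.\ $j_l$; closedness then lets one isolate every component, in particular $c_{0,0}$. Your proposed mechanism---``subtracting images under $\Phi_\hbar$ of suitable $\U$-translates of $\widetilde j$''---does not obviously work as stated: the multipliers one actually needs ($p_l$, $q_l$ and, in the limit, arbitrary elements of the completed quantum algebra) are not of the form $\Phi_\hbar(u)$ with $u\in\U$, and no stripping or convergence argument is supplied. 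Note also that once Lemma \ref{Lem:3.62} is available, the paper's route is shorter: apply it once to the whole closed ideal, take classical parts to get $J^\wedge=\K[[V^*]]\widehat{\otimes}I^\wedge$, and finish by density of $\gr\J$ in $J^\wedge$ and of $\gr\J_\dagger$ in $I^\wedge$ together with positivity of the grading, with no need to chase individual homogeneous lifts.

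A second, logically independent problem is your repeated use of the identity $r_2\circ\Phi_0=r_1$, which you claim follows from Theorem \ref{Thm:3.13}(1). That condition only asserts $\varphi(x)=x$ and $d_x\varphi=\psi^{-1}$, i.e.\ it controls $\varphi$ to first order at the single point $x$; it does not imply that $\varphi$ carries the formal subscheme $S\times\{0\}$ onto $S\subset\g^*$, which is what the equality $\Phi_0(f)|_{V^*=0}=f|_S$ amounts to. Indeed, one may compose $\Phi_\hbar$ with an equivariant automorphism whose classical part is the identity to first order at $x$ yet moves $S\times\{0\}$, without violating either condition of Theorem \ref{Thm:3.13}. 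Both your forward inclusion (via $a=r_1(\Phi_0^{-1}(a))$) and your identification of the classical part of $c_{0,0}(\hbar)$ with $j|_S$ rest on this unjustified intertwining; the comparison between the abstract decomposition furnished by $\Phi_0$ and the honest restriction to $S$ has to be extracted from the structure of the ideals (as in the density argument following Lemma \ref{Lem:3.62} and Proposition \ref{Prop:1.45}), not from the pointwise normalization of $\varphi$.
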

We note that $\gr \J$ does not depend on whether we consider the
Kazhdan filtration or the standard one. This stems easily from the
observation that $\J$ is $\ad(h')$-stable.
\begin{Lem}\label{Lem:3.62}
Any closed $\hbar$-saturated $\K^\times$-stable ideal
$\overline{\J}_\hbar$ of the quantum algebra $\K[[\vf_2^*,\hbar]]$
 has the form $\K[[V^*,\hbar]]\widehat{\otimes}_{\K[[\hbar]]}\overline{\I}_\hbar$
for $\overline{\I}_\hbar:=\overline{\J}_\hbar\cap
\K[S]^\wedge_\chi[[\hbar]]$.
\end{Lem}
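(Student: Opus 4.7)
The inclusion $\K[[V^*,\hbar]]\widehat\otimes_{\K[[\hbar]]}\overline{\I}_\hbar\subset\overline{\J}_\hbar$ is immediate, since $\overline{\I}_\hbar\subset\overline{\J}_\hbar$ and $\overline{\J}_\hbar$ is a closed two-sided ideal. My plan for the reverse inclusion is to take $a\in\overline{\J}_\hbar$, write it uniquely as $a=\sum_{\mathbf m,\mathbf n}a_{\mathbf m,\mathbf n}\,p^{*\mathbf m}q^{*\mathbf n}$ with $a_{\mathbf m,\mathbf n}\in\K[S]^\wedge_\chi[[\hbar]]$, and to show each coefficient lies in $\overline{\I}_\hbar$.

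The essential input is that the star product on $\K[[\vf_2^*,\hbar]]$ is the completed tensor product of the Moyal--Weyl product on $\K[[V^*,\hbar]]$ and the Fedosov-type product on $\K[S]^\wedge_\chi[[\hbar]]$ (this comes from the decomposition $\A_2=\W_V\otimes\Walg$). Consequently $\ad_*(q_l)=-\hbar\,\partial_{p_l}$ and $\ad_*(p_l)=\hbar\,\partial_{q_l}$, where $\partial_{p_l},\partial_{q_l}$ are formal partial derivatives acting trivially on the $\K[S]^\wedge_\chi[[\hbar]]$ factor. Since $\overline{\J}_\hbar$ is two-sided it is preserved by these inner derivations, and $\hbar$-saturation upgrades this to preservation by $\partial_{p_l},\partial_{q_l}$ themselves. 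Iterating yields
\[
b_{\mathbf m,\mathbf n}:=\partial_p^{\mathbf m}\partial_q^{\mathbf n}(a)=\sum_{\mathbf i,\mathbf j\geq 0}\frac{(\mathbf m+\mathbf i)!\,(\mathbf n+\mathbf j)!}{\mathbf i!\,\mathbf j!}\,a_{\mathbf m+\mathbf i,\mathbf n+\mathbf j}\,p^{*\mathbf i}q^{*\mathbf j}\in\overline{\J}_\hbar
\]
for every $(\mathbf m,\mathbf n)$, whose $(\mathbf i,\mathbf j)=(0,0)$-component equals $\mathbf m!\,\mathbf n!\,a_{\mathbf m,\mathbf n}$.

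To convert the containment $b_{\mathbf m,\mathbf n}\in\overline{\J}_\hbar$ into the desired statement $a_{\mathbf m,\mathbf n}\in\overline{\I}_\hbar$, I use the $\K^\times$-stability of $\overline{\J}_\hbar$ to reduce to the case where $a$ is $\K^\times$-semi-invariant of some weight $d$; character-orthogonality averaging over finite subgroups of $\K^\times$ shows each weight component of $a$ remains in $\overline{\J}_\hbar$. Then $a_{\mathbf m,\mathbf n}$ is semi-invariant of weight $d-\sum_l(e_l'm_l+e_ln_l)$, which must be nonnegative whenever $a_{\mathbf m,\mathbf n}\neq 0$, since $\K[S]^\wedge_\chi[[\hbar]]$ is nonnegatively graded. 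Because $e_l+e_l'=2$ and Kazhdan degrees on $\m$ are at most $1$, we get $e_l'\geq 1$ for every $l$, which bounds $|\mathbf m|\leq d$ in each semi-invariant component.

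This bound drives a downward induction on $|\mathbf m|$. At the maximal admissible $\mathbf m$, the $\mathbf i$-summation in $b_{\mathbf m,\mathbf n}$ collapses to $\mathbf i=0$, so $b_{\mathbf m,\mathbf n}$ lies in the commutative subring $\K[[\underline{\m}]]\widehat\otimes\K[S]^\wedge_\chi[[\hbar]]\subset\K[[\vf_2^*,\hbar]]$; a separate weight analysis inside this subring, using the finiteness of each $\K^\times$-weight stratum of $\K[S]^\wedge_\chi[[\hbar]]$, peels off the coefficients $a_{\mathbf m,\mathbf n}$ into $\overline{\I}_\hbar$. For smaller $\mathbf m$, the tail in $b_{\mathbf m,\mathbf n}$ only involves $a_{\mathbf m+\mathbf i,\mathbf n+\mathbf j}$ with $\mathbf i\neq 0$, already known by induction to lie in $\overline{\I}_\hbar$, so the (convergent) tail belongs to $\K[[V^*,\hbar]]\widehat\otimes\overline{\I}_\hbar\subset\overline{\J}_\hbar$ by closedness, and subtracting from $b_{\mathbf m,\mathbf n}$ gives $a_{\mathbf m,\mathbf n}\in\overline{\I}_\hbar$. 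The main obstacle will be the base case at maximal~$\mathbf m$, where the $\mathbf n$-summation remains genuinely infinite whenever some $e_l\leq 0$; handling it rigorously requires the separate weight-based argument within the commutative subring outlined above, together with a careful use of $\hbar$-saturation to match weights across the $\hbar$-adic scale.
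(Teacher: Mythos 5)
Your opening move is the same as the paper's: since $\overline{\J}_\hbar$ is a two-sided, closed, $\hbar$-saturated ideal, the inner derivations $\ad_*(q_l),\ad_*(p_l)$ (which equal $\mp\hbar^2\partial_{p_l},\pm\hbar^2\partial_{q_l}$ for the degree-$2$ Moyal product --- note $\hbar^2$, not $\hbar$) preserve it after dividing by $\hbar^2$. But the paper does not stop at the derivations: it composes them with \emph{left multiplication}, forming the operators $a\mapsto\hbar^{-2}p_l*(q_l*a-a*q_l)$ and $a\mapsto\hbar^{-2}q_l*(p_l*a-a*p_l)$, which still preserve the ideal and act \emph{diagonally} on the expansion, multiplying the $({\bf i},{\bf j})$-term by $i_l$, resp.\ $j_l$. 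Polynomials in these commuting number operators, together with closedness of $\overline{\J}_\hbar$ in the adic topology, isolate each single term $p^{*{\bf i}}*q^{*{\bf j}}a_{{\bf i},{\bf j}}$ inside the ideal, and only then do the derivations strip off $p^{*{\bf i}}*q^{*{\bf j}}$ to give $a_{{\bf i},{\bf j}}\in\overline{\J}_\hbar\cap\K[S]^\wedge_\chi[[\hbar]]=\overline{\I}_\hbar$. This diagonalization is the idea your proposal is missing.

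Because you use only the derivations, your $b_{{\bf m},{\bf n}}$ carries an infinite tail, and the weight-theoretic induction you propose to remove it does not work. The flaw is the asserted bound $|{\bf m}|\leqslant d$: from $\K^\times$-semi-invariance you only get $\sum_l(e_l'm_l+e_ln_l)\leqslant d$, and since the grading on $\underline{\m}$ is \emph{not} positive ($e_l\leqslant 1$, with $e_l=0$ already for $f\in\g(-2)\subset[\g,f]$ and $e_l<0$ whenever $\g(i)\neq 0$ for $i\leqslant -3$), the term $-\sum_l e_ln_l$ can be arbitrarily large and positive; hence $|{\bf m}|$ is unbounded within a fixed weight component and there is no ``maximal admissible ${\bf m}$'' to serve as the base of your downward induction. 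You yourself flag that the base case is unresolved, but the problem is more basic than the infinite ${\bf n}$-summation there: the induction cannot start. The repair is exactly the paper's device --- after commutating, multiply back by $p_l$ (resp.\ $q_l$) on the left to obtain number operators, and use closedness to pass to the limit of the interpolating polynomials in them; no weight decomposition is then needed.
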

\begin{proof}
Indeed, fix an element $a=\sum_{{\bf i},{\bf j}} p^{*{\bf i}}*
q^{*{\bf j}} a_{{\bf i},{\bf j}}\in \overline{\J}_\hbar$. Note that
$$\hbar^{-2}p_l*(q_l*a-a*q_l)= \sum_{{\bf i},{\bf j}} i_l p^{*\bf
i}* q^{*\bf j} a_{{\bf i},{\bf j}},\,\,\hbar^{-2}q_k*(p_k*a-a*p_k)=
\sum_{{\bf i},{\bf j}} j_l p^{*\bf i}* q^{*\bf j} a_{{\bf i},{\bf
j}}.$$ Since $\overline{\J}_\hbar$ is closed and $\hbar$-saturated,
we see that $p^{\bf i}* q^{\bf j} a_{{\bf i},{\bf j}} \in
\overline{\J}_\hbar$. Now it is easy to see that $a_{{\bf i},{\bf
j}}\in \overline{\J}_\hbar$ hence the claim.
\end{proof}
\begin{proof}[Proof of Proposition \ref{Prop:3.24}]
For $\overline{\J}_\hbar$ take the closure of
$\Phi_\hbar(R_\hbar(\J))$ in $\K[[\vf_2^*,\hbar]]$. Let
$J^\wedge\subset\K[\g^*]^\wedge_\chi,I^\wedge\subset\K[S]^\wedge_\chi$
denote the classical parts of $\overline{\J}_\hbar,
\overline{\I}_\hbar$. Clearly, $J^\wedge=\K[[V^*]]\widehat{\otimes}
I^\wedge$. It follows that
$I^\wedge=(J^\wedge+I(S)^\wedge)/I(S)^\wedge$, where $I(S)^\wedge:=
V\K[[\vf_2^*]]$. Note that $\gr \J_\dagger$ is dense in $I^\wedge$
and $\gr \J$ is dense in $J^\wedge$. Therefore $(\gr \J+I(S))/I(S)$
is dense in $I^\wedge$ too. Analogously to assertion 3 of
Proposition \ref{Prop:1.45}, we get $\gr \J_\dagger=(\gr
\J+I(S))/I(S)$.
\end{proof}

Now let us relate the maps $\I\mapsto \I^\dagger, \J\mapsto
\J_\dagger$.

\begin{Prop}\label{Prop:3.37}
$\I\supset (\I^\dagger)_\dagger$ and $\J\subset
(\J_\dagger)^\dagger$ for any $\I\in\Id(\Walg), \J\in \Id(\U)$.
\end{Prop}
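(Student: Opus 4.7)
The plan is to use the alternative description of $\I^\dagger$ via Rees algebras from Proposition \ref{Prop:3.35}, since this puts both $\dagger$-maps into a parallel language. The strategy for each inclusion is to pass to the closures $\overline{\J}_\hbar\subset \K[[\vf_1^*,\hbar]]$ and $\overline{\I}_\hbar\subset \K[S]^\wedge_\chi[[\hbar]]$, exploit the fact that $\Phi_\hbar$ is a $\K^\times$-equivariant topological isomorphism (so it preserves closed, $\hbar$-saturated, $\K^\times$-stable ideals), and then descend back to the Rees algebras by applying Proposition \ref{Prop:1.45}(3), whose positive-grading hypothesis is satisfied because the Kazhdan degrees of $\z_\g(e)\subset \K[S]$ are all $\geqslant 2$.

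For $(\I^\dagger)_\dagger\subset \I$, set $\J:=\I^\dagger$. The alternative definition gives $R_\hbar(\J)\subset \Phi_\hbar^{-1}(\K[[V^*,\hbar]]\widehat{\otimes}_{\K[[\hbar]]}\overline{\I}_\hbar)$, and since the right-hand side is closed, the same inclusion holds with $R_\hbar(\J)$ replaced by its closure $\overline{\J}_\hbar$. Applying $\Phi_\hbar$ and intersecting with $R_\hbar(\Walg)\subset \K[S][\hbar]$ yields
\[
R_\hbar(\J_\dagger)=\Phi_\hbar(\overline{\J}_\hbar)\cap R_\hbar(\Walg)\subset \bigl(\K[[V^*,\hbar]]\widehat{\otimes}_{\K[[\hbar]]}\overline{\I}_\hbar\bigr)\cap \K[S][\hbar]=\overline{\I}_\hbar\cap \K[S][\hbar]=R_\hbar(\I),
\]
where the penultimate equality is a direct coordinate check (the $V^*$-factors are transverse to $\K[S][\hbar]$) and the last equality is Proposition \ref{Prop:1.45}(3) applied to $\Walg$. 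This gives $\J_\dagger\subset \I$.

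For $\J\subset (\J_\dagger)^\dagger$, note that $\Phi_\hbar(\overline{\J}_\hbar)$ is a closed, $\hbar$-saturated, $\K^\times$-stable ideal of $\K[[\vf_2^*,\hbar]]$, so Lemma \ref{Lem:3.62} provides the decomposition $\Phi_\hbar(\overline{\J}_\hbar)=\K[[V^*,\hbar]]\widehat{\otimes}_{\K[[\hbar]]}\overline{\I}'_\hbar$ with $\overline{\I}'_\hbar:=\Phi_\hbar(\overline{\J}_\hbar)\cap \K[S]^\wedge_\chi[[\hbar]]$. Proposition \ref{Prop:1.45}(3) applied to $\overline{\I}'_\hbar$ yields a unique $\I'\in \Id(\Walg)$ with $\overline{(\I')}_\hbar=\overline{\I}'_\hbar$ and $R_\hbar(\I')=\overline{\I}'_\hbar\cap \K[S][\hbar]$; but the latter equals $\Phi_\hbar(\overline{\J}_\hbar)\cap R_\hbar(\Walg)=R_\hbar(\J_\dagger)$ by the same transversality argument, so $\I'=\J_\dagger$ and hence $\overline{(\J_\dagger)}_\hbar=\overline{\I}'_\hbar$. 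Consequently $\Phi_\hbar(R_\hbar(\J))\subset \Phi_\hbar(\overline{\J}_\hbar)=\K[[V^*,\hbar]]\widehat{\otimes}_{\K[[\hbar]]}\overline{(\J_\dagger)}_\hbar$, which is exactly the Rees-algebra condition $R_\hbar(\J)\subset R_\hbar((\J_\dagger)^\dagger)$, giving $\J\subset (\J_\dagger)^\dagger$.

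The main delicate point is the identification $\overline{\I}'_\hbar=\overline{(\J_\dagger)}_\hbar$ needed for the second inclusion; rather than proving directly that $R_\hbar(\J_\dagger)$ is dense in $\overline{\I}'_\hbar$ (which is not obvious since $R_\hbar(\Walg)$ is far from dense in $\K[S]^\wedge_\chi[[\hbar]]$), one instead uses the \emph{uniqueness} asserted by Proposition \ref{Prop:1.45}(3) and the fact that passage from a closed $\hbar$-saturated $\K^\times$-stable ideal to its intersection with $R_\hbar(\Walg)$ is injective. Once this is seen, both inclusions reduce to formal manipulations of ideals under the isomorphism $\Phi_\hbar$.
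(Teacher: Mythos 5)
Your proof is correct and follows essentially the same route as the paper: it reduces to the $\ddagger$-description via Proposition \ref{Prop:3.35}, derives the first inclusion from Proposition \ref{Prop:1.45}(3), and the second from Lemma \ref{Lem:3.62}. You have simply written out in full the details that the paper's two-sentence proof leaves implicit.
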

\begin{proof}
Thanks to Proposition \ref{Prop:3.35}, we need to prove that
$\I\supset (\I^\ddagger)_\dagger, \J\subset (\J_\dagger)^\ddagger$.
The first inclusion stems from assertion 3 of Proposition
\ref{Prop:1.45}, while the second one follows from Lemma
\ref{Lem:3.62}.
\end{proof}

%

\begin{proof}[Proof of assertions (v),(vi)]
Assertion (v) follows from Proposition \ref{Prop:3.24} and the first
inclusion of Proposition \ref{Prop:3.37}. To prove assertion (vi)
(also proved in \cite{Premet2}, Theorem 3.1)  consider a faithful
finite dimensional $\Walg/\I$-module $M$. By Proposition
\ref{Prop:3.41}, $\Dim_\U (\mathcal{S}(M))=\dim \underline{\m}$. By
\cite{Jantzen}, 10.7 and assertion (ii), $\dim
\VA(\I^\dagger)\leqslant 2\Dim \mathcal{S}(M)=\dim G\chi$.
\end{proof}

\begin{Cor}\label{Cor:3.40}
Let $M$ be a finitely generated $\Walg$-module. Then
$$2\Dim_{\Walg}M\geqslant \Dim_{\Walg}(\Walg/\Ann(M)).$$
\end{Cor}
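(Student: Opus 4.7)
The plan is to transfer the Bernstein inequality for $\U$-modules across the Skryabin correspondence, using Proposition~\ref{Prop:3.41} to match Gelfand-Kirillov dimensions and Theorem~\ref{Thm:0.2.2} to match associated varieties.

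First, I would set $\I := \Ann_\Walg(M)$ and consider the Whittaker $\U$-module $N := \mathcal{S}(M)$. By Theorem~\ref{Thm:0.2.2}(ii), $\Ann_\U(N) = \I^\dagger$. Applying Bernstein's inequality (the $\U$-analogue cited earlier as \cite{Jantzen}, 10.7) to $N$ gives
\[
\dim\VA(\I^\dagger) \;=\; \Dim_\U(\U/\I^\dagger) \;\leqslant\; 2\Dim_\U(N),
\]
and Proposition~\ref{Prop:3.41} rewrites the right-hand side as $2\Dim_\Walg(M) + 2\dim\underline{\m}$.

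Next, I would bound $\dim\VA(\I^\dagger)$ from below via Theorem~\ref{Thm:0.2.2}(v), which gives $\VA(\I^\dagger) \supset \overline{G\cdot\VA(\I)}$. The Kazhdan grading on $\gr\Walg\cong\K[S]\cong S(\z_\g(e))$ places $\z_\g(e)\cap\g(i)$ in degree $i+2\geqslant 2$, so $\K[S]_0=\K$ and every proper graded ideal vanishes at the origin, which under the identification $S = \chi + (\g/[\g,f])^*$ is exactly $\chi$. Hence $\chi\in\VA(\I)$. Since $S$ is transverse to $G\chi$ at $\chi$, the action map $G\times S\to\g^*$ is smooth at $(e,\chi)$, so
\[
\dim G\cdot\VA(\I) \;\geqslant\; \dim G\chi + \dim\VA(\I) \;=\; \dim G\chi + \Dim_\Walg(\Walg/\I).
\]

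Combining the two estimates gives
\[
2\Dim_\Walg(M) + 2\dim\underline{\m} \;\geqslant\; \dim G\chi + \Dim_\Walg(\Walg/\Ann M).
\]
Since $\y$ is Lagrangian in $\g(-1)$, the space $\underline{\m}$ is Lagrangian in $V=[\g,f]$, and $\sl_2$-theory ($\g = \z_\g(e)\oplus[\g,f]$ with $\dim\z_\g(e)=\dim\z_\g(f)$) gives $\dim V = \dim G\chi$; hence $2\dim\underline{\m}=\dim G\chi$. The two occurrences of $\dim G\chi$ cancel, yielding the corollary. The one step requiring genuine care is the geometric lower bound $\dim G\cdot\VA(\I)\geqslant \dim G\chi+\dim\VA(\I)$, and its subtle ingredient is the observation that positivity of the Kazhdan grading forces $\chi\in\VA(\I)$ for every proper $\I$; everything else is bookkeeping against results already in the paper.
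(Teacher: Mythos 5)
Your proof is correct and follows essentially the same route as the paper: apply Proposition \ref{Prop:3.41} and Theorem \ref{Thm:0.2.2}(ii) to $\mathcal{S}(M)$, invoke the Bernstein inequality from \cite{Jantzen}, 10.7, and bound $\dim\VA(\Ann(M)^\dagger)$ from below via Theorem \ref{Thm:0.2.2}(v) together with $\dim\overline{G\VA(\I)}=\dim\VA(\I)+\dim G\chi$ and $\dim V=2\dim\underline{\m}=\dim G\chi$. The only difference is cosmetic: you re-derive the transversality/positive-grading argument for the dimension of $\overline{G\VA(\I)}$, which the paper has already recorded in the proof of assertions (vii)--(viii) and simply cites here.
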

\begin{proof}
 By Proposition
\ref{Prop:3.41}, $\Dim_\U \mathcal{S}(M)=\Dim_\Walg M+\dim \m$. Assertion
(iii) of Theorem \ref{Thm:3.3} implies
$\Ann(\mathcal{S}(M))=\Ann(M)^\dagger$. By assertion (v), $\Dim_\U
\U/\Ann(M)^\dagger\geqslant \Dim_\Walg(\Walg/\Ann(M))+\dim V$.
Finally, we apply the fact that $2\Dim_\U(\mathcal{S}(M))\geqslant
\Dim_\U(\U/\Ann(\mathcal{S}(M)))$, see \cite{Jantzen}, 10.7.
\end{proof}



\begin{proof}[Proof of assertions (vii),(viii)]
By assertion (v), $G\chi\subset \VA(\I^\dagger)$ for any $\I\in
\Pr(\Walg)$. Now let $\J\in \Pr(\U)$ be such that $G\chi\subset
\VA(\J)$. By Proposition \ref{Prop:3.37}, the equality
$\J=\I^\dagger$ implies $\J_\dagger\subset \I$.

 The Slodowy slice $S$
intersects transversally any $G$-orbit, whose closure contains
$G\chi$. Moreover, the description of the Poisson structure on $S$
given in \cite{GG}, Section 3, implies that any Poisson subvariety
$S_0$ of $S$ is the union of irreducible components of the
intersections of $S$ with $G$-stable subvarieties of $\g^*$.
Therefore for any such $S_0$ we have the equality
$\dim\overline{GS_0}=\dim S_0+\dim G\chi$.

Let $\I$ be a  prime ideal of $\Walg$ containing $\J_\dagger$. Then
$\VA(\J_\dagger)\supset \VA(\I)$. By Proposition \ref{Prop:3.24},
$\VA(\J_\dagger)=\VA(\J)\cap S$. Therefore $\dim
\VA(\I)\leqslant\dim\VA(\J_\dagger)= \dim\VA(\J)-\dim G\chi$.

Now let $\I$ be an admissible element of $\Pr(\Walg)$  such that
$\I^\dagger=\J$. Then
$\dim\VA(\J)=\dim\VA(I^\dagger)=\dim\VA(\I)+\dim G\chi$. Let us
check that $\I$ is a minimal prime ideal containing $\J_\dagger$.
Assume the converse, there is $\I_0\in\Pr(\Walg)$ with
$\I\supset\I_0\supset\J_\dagger$. Then $\I_0^\dagger\subset
\I^\dagger=\J$ and $\dim \VA(\J_\dagger)\geqslant
\dim\VA(\I_0)\geqslant \dim \VA(\I)$, hence $\dim
\VA(\I_0)=\dim\VA(\I)$. Applying \cite{BK}, Corollar 3.6, we see
that $\I_0=\I$.

Conversely, let $\I$ be a minimal prime ideal of $\Walg$ containing
$\J_\dagger$ and such that $\dim\VA(\I)\geqslant \dim\VA(\J)-\dim
G\chi$. Let us show that $\I$ is admissible and $\I^\dagger=\J$.
First of all, let us check that $\J=(\J_\dagger)^\dagger$. Indeed,
by Proposition \ref{Prop:3.37}, $\J\subset (\J_\dagger)^\dagger$. On
the other hand, $\dim\VA(\J)=\dim\VA(\J_\dagger)+\dim G\chi\leqslant
\dim \VA((\J_\dagger)^\dagger)$. By Corollar 3.6 from \cite{BK},
$\J=(\J_\dagger)^\dagger$. Therefore $\J\subset \I^\dagger$. Since
$\dim\VA(\I)\geqslant\dim\VA(\J)-\dim G\chi$, we see that
$\dim\VA(\J)\geqslant \dim\VA(\I^\dagger)\geqslant \dim \VA(\I)+\dim
G\chi\geqslant\dim\VA(\J)$. Applying \cite{BK}, Corollar 3.6, again,
we get $\I^\dagger=\J$.

To complete the proof we need to check that there is $\I\in
\Pr(\Walg)$ with $\I\supset\J_\dagger, \dim\VA(\I)= \dim\VA(\J)-\dim
G\chi$. Let $\I_1,\ldots,\I_k$ be all minimal prime ideals of
$\Walg$ containing $\J_\dagger$. Then
$\sqrt{\J_\dagger}=\cap_{i=1}^k\I_i$. It follows that
$\VA(J_\dagger)=\cup_{i=1}^k\VA(\I_i)$. Since
$\dim\VA(\J_\dagger)=\dim\VA(\J)-\dim G\chi$, we see that
$\dim\VA(\I_j)= \dim \VA(\J)-\dim G\chi$ for some $j$.
\end{proof}

The following proposition will be used in the proof of assertion
(ix). In particular, it proves Conjecture 3.1 (3) from
\cite{Premet2}.

\begin{Prop}\label{Prop:3.33}
Let $\I\subset \Walg$ be a primitive (or prime) ideal  of finite
codimension. Then $\Goldie(\U/\I^\dagger)\leqslant
\Goldie(\Walg/\I)= (\dim\Walg/\I)^{1/2}$.
\end{Prop}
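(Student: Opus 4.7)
Since $\K$ is algebraically closed and $\I$ is a prime ideal of finite codimension, $\Walg/\I$ is a prime finite-dimensional $\K$-algebra; the Jacobson radical is a nilpotent two-sided ideal that is forced to vanish by primeness, so $\Walg/\I$ is semisimple Artinian, and being prime it is simple. By Artin--Wedderburn, $\Walg/\I \cong M_n(\K)$, which gives $\Goldie(\Walg/\I) = n$ and $\dim\Walg/\I = n^2$, proving the displayed equality.

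For the inequality $\Goldie(\U/\I^\dagger) \leq n$, I would apply Theorem~\ref{Thm:0.2.2}(ii) to identify $\I^\dagger$ with $\Ann_\U \mathcal{S}(\Walg/\I)$, and Proposition~\ref{Prop:3.13} to recognize $\mathcal{S}(\Walg/\I) \cong \K[\underline{\m}] \otimes (\Walg/\I)$ as a $\W_V(\Walg)$-module. Write $L_0 = \K^n$ for the unique simple $\Walg/\I$-module; then $\Walg/\I \cong L_0^{\oplus n}$ as a left $\Walg$-module, and by exactness of Skryabin's equivalence (Theorem~\ref{Thm:0.1.1}) one gets $\mathcal{S}(\Walg/\I) \cong L^{\oplus n}$, where $L := \mathcal{S}(L_0)$ is simple with $\Ann_\U L = \I^\dagger$ and $\End_\U L = \End_{\Walg/\I}(L_0) = \K$.

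The crux is a multiplicity computation. From the Kazhdan filtration, $\gr\mathcal{S}(\Walg/\I) \cong \K[\underline{\m}] \otimes \gr(\Walg/\I)$ as a graded $\K[\g^*]$-module. Via the local slice identification underlying Corollary~\ref{Cor:3.14}, the support of $\gr\mathcal{S}(\Walg/\I)$ in $\g^*$ is contained in $\overline{G\chi}$ with dimension $\dim\m$ (consistent with Proposition~\ref{Prop:3.41}), and the generic rank is $n^2$ since $\dim_\K\gr(\Walg/\I) = n^2$ while $\K[\underline{\m}]$ is generically of rank one on its support. Thus $\mult_\U \mathcal{S}(\Walg/\I) = n^2$, whence $\mult_\U L = n$ by additivity over the direct sum decomposition. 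Proposition~\ref{Prop:3.41} also gives $\Dim_\U L = \dim\m = \tfrac12 \Dim_\U(\U/\I^\dagger)$, so $L$ is holonomic over $\U/\I^\dagger$.

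The final step invokes the bound $\Goldie(\U/\Ann L) \leq \mult_\U(L)$ for a faithful simple holonomic $\U$-module with $\End_\U(L) = \K$, a consequence of Joseph's Goldie-rank-polynomial formalism; applied to our $L$ this yields $\Goldie(\U/\I^\dagger) \leq n$. The main obstacle is justifying this last bound rigorously: while it is tight in the benchmark case $R = M_n(\W_V)$ with simple module $\K[\underline{\m}]^{\oplus n}$ (where both sides equal $n$), a complete argument may require Joseph's machinery directly, or alternatively a direct embedding argument using $\Phi$ to place $\U/\I^\dagger$ inside $\W_V(\Walg/\I)^\wedge \cong M_n(\W_V)^\wedge$ together with a Goldie-rank-preservation statement for the completion.
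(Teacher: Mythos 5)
The equality $\Goldie(\Walg/\I)=(\dim\Walg/\I)^{1/2}$ via Artin--Wedderburn is exactly what the paper does, and your identification of $\mathcal{S}(\Walg/\I)$ with $\K[\underline{\m}]\otimes\Walg/\I$ is correct. The problem is the last step, which carries the entire content of the inequality. The bound $\Goldie(\U/\Ann L)\leqslant \mult_\U(L)$ for a simple holonomic module $L$ is not a citable result in the generality you need: Joseph's Goldie-rank-polynomial formalism pertains to simple highest weight modules, where the comparison of the Goldie-rank and multiplicity polynomials rests on translation functors and the structure of category $\mathcal{O}$; none of that applies to $L=\mathcal{S}(L_0)$. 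Moreover, a general bound of this form, combined with your computation $\mult_\U(L)=n$ and with $\mult(\J)\geqslant n^2$, would essentially yield McGovern's inequality $\Goldie(\U/\J)\leqslant\sqrt{\mult(\J)}$ outright --- but that inequality is exactly what the paper deduces from assertion (ix) of Theorem \ref{Thm:0.2.2}, whose proof has Proposition \ref{Prop:3.33} as its key ingredient. So the step you yourself flag as ``the main obstacle'' is a genuine gap, of the same depth as the statement to be proved, and the multiplicity computations preceding it do not help to close it (if anything, additivity-type arguments tend to produce lower bounds for the Goldie rank, which is the direction of assertion (ix), not the upper bound needed here).

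The paper's proof is the ``direct embedding argument'' you mention only as a fallback, and the point you worry about --- Goldie rank versus completion --- is precisely why the paper works with the algebra $\A^\heartsuit$ rather than with $\A^\wedge$. Concretely: $\Phi$ embeds $\U$ into $\A_2^\heartsuit$ with $\I^\dagger=\U\cap\Phi^{-1}(\W(\I)^\wedge)$, and a direct check with the expansions (\ref{eq:3.3.2}) shows that $\A_2^\heartsuit/(\A_2^\heartsuit\cap\W(\I)^\wedge)\cong\underline{\A}^\heartsuit\otimes\Walg/\I$, where $\underline{\A}^\heartsuit$ is the completion of $\underline{\A}=\W_V$ of ``heart'' type (one uses that the filtration on $\Walg$ is nonnegative and that $\F_n\Walg+\I=\Walg$ for some $n$). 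Proposition \ref{Prop:1.41} was proved precisely to guarantee that $\underline{\A}^\heartsuit$ is a Noetherian domain; then $\Goldie(\underline{\A}^\heartsuit\otimes M_n(\K))=n$ by \cite{MR}, Example 2.11(iii), and Theorem 1 of \cite{Warfield} applied to the embedding $\U/\I^\dagger\hookrightarrow\underline{\A}^\heartsuit\otimes\Walg/\I$ gives $\Goldie(\U/\I^\dagger)\leqslant n$. If you want to complete your write-up, this is the route to carry out; no multiplicity argument is needed.
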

\begin{proof}
Clearly, $\Walg/\I$ is a matrix algebra hence $\Goldie(\Walg/\I)=
(\dim\Walg/\I)^{1/2}$. Suppose we have found some Noetherian domain
$\underline{\A}'$ such that there exists an embedding
$\U/\I^\dagger\hookrightarrow \underline{\A}'\otimes \Walg/\I$. The
Goldie rank of the last algebra coincides with that of $\Walg/\I$,
see, for example, \cite{MR}, Example 2.11(iii). Applying Theorem 1
from \cite{Warfield}, we get $\Goldie(\U/\I^\dagger)\leqslant
\Goldie(\underline{\A}'\otimes\Walg/\I)$. So the goal is to
construct $\underline{\A}'$.

 Set $\underline{A}:=S(V),
\underline{\A}:=\W_V$ so that $\A_2=\underline{\A}\otimes\Walg$.
Construct $\underline{\A}^\heartsuit$ from $\underline{\A}$ and
$\underline{\A}^\wedge$ from $\underline{\A},\underline{\m}$ as in
Subsection \ref{SUBSECTION_completion}. By Proposition
\ref{Prop:1.41}, $\underline{\A}^\heartsuit$ is a Noetherian domain.
Recall that $\Phi$ induces the embedding
$\U\hookrightarrow\A_2^\heartsuit$ and $\I^\dagger=\U\cap
\Phi^{-1}(\W(\I)^\wedge)$. So we need to check that
$\A_2^\heartsuit/(\A_2^\heartsuit\cap \W(\I)^\wedge)\cong
\underline{\A}^\heartsuit\otimes \Walg/\I$.

An element (\ref{eq:3.3.2}) lies in $\A_2^\heartsuit$ iff there is
$c$ with
$$a_{{\bf i},{\bf j}}\in \F_{c-\sum_{l=1}^k (i_l e_l'+j_le_l)}\Walg.$$
We need to check that the image of $\A_2^\heartsuit$ under the
natural epimorphism
$\rho:\A_2^\wedge\rightarrow\A_2^\wedge/\W(\I)^\wedge\cong
\underline{\A}^\wedge\otimes \Walg/\I$ coincides with
$\underline{\A}^\heartsuit\otimes \Walg/\I$. Recall that the
filtration on $\Walg$ is nonnegative. Therefore the sum
$\sum_{l=1}^k (i_l e_l'+j_le_l)$ with $a_{{\bf i},{\bf j}}\neq 0$ is
bounded from above for any $a\in \A_2^\heartsuit$.    Hence the
inclusion $\rho(\A_2^\heartsuit)\subset
\underline{\A}^\heartsuit\otimes \Walg/\I$. To prove the opposite
inclusion  note that there is $n\in\N$ with $\F_n\Walg+\I=\Walg$. So
for any $b\in \underline{\A}^\heartsuit\otimes \Walg/\I$ the inverse
image $\rho^{-1}(b)$ contains an element of "finite degree", that
is, from $\A_2^\heartsuit$.
\end{proof}

\begin{proof}[Proof of assertion (ix)] By Proposition \ref{Prop:3.24},
$\mult(\J)=\codim_{\Walg}\J_\dagger$. The inverse image of $\J$ in
$\Pr^a(\Walg)$ consists of all  prime ideals $\I_1,\ldots,\I_k$ of
$\Walg$  containing $\J_\dagger$ (the minimality condition and the
dimension estimate hold automatically). By Proposition
\ref{Prop:3.33}, $\codim_{\Walg}\I_j\geqslant \Goldie(\U/\J)^2$.
Since, obviously, $\codim_{\Walg}\J_\dagger\geqslant
\sum_{i=1}^k\codim_{\Walg}\I_j$, we are done.
\end{proof}

\subsection{Finite dimensional representations}\label{SUBSECTION_embedding}
To prove Theorem \ref{Thm:0.2.3} we need the following construction.

Let $\A$ be a simple filtered associative algebra equipped with an
action of $G$ by filtration preserving automorphisms. Assume, in
addition, that $\gr \A$ is finitely generated. For example,
$\A=\widetilde{\Walg}$ satisfies these conditions. Let $V$ be a
$\A^G$-module. Consider the $\A$-module $\A\otimes_{\A^G}V$. Since
$\A$ is simple, this module is faithful. So we have the faithful
$G$-invariant representation of $\A^G$ in
$\A\otimes_{\A^G}V=\bigoplus_{\lambda} \A_{\lambda}\otimes_{\A^G}V$.

\begin{proof}[Proof of Theorem \ref{Thm:0.2.3}]
There is $\J\in\Pr(\U)$ with $\VA(\J)=\overline{G\chi}$, see
\cite{Jantzen2}, 9.12, for references. From assertion (viii) of
Theorem \ref{Thm:0.2.2} we deduce that $\Walg$ has a nontrivial
finite dimensional irreducible module, say $V$ (this result was
obtained previously by Premet, \cite{Premet3}).

Now suppose $\g$ is
classical. We will see in the next subsection that there is an ideal $\J\subset \U$ s.t.
$\overline{G\chi}$ is a component of $\VA(\J)$ and $\mult_{\overline{G\chi}} \J=1$.
 Now the existence of an ideal of codimension 1 (and so existence  of a
one-dimensional $\Walg$-module) follows from Proposition \ref{Prop:3.24} applied to $\J$.

Proceed to assertion (2). It follows from the discussion preceding
the proof that it is enough to show that there is a $G$-equivariant
isomorphism of right $\Walg$-modules $\widetilde{\Walg}_\lambda,
\K[G]_\lambda\otimes \Walg$. Since $\K[X]= \K[G]\otimes\K[S]$, we
get $\K[X]_\lambda=\K[G]_\lambda\otimes \K[S]$. The previous
equality gives rise to a $G\times\K^\times$-equivariant embedding
$\K[G]_\lambda\hookrightarrow \K[X]_\lambda$. There is a
$G$-equivariant embedding $\K[G]_\lambda\hookrightarrow
\widetilde{\Walg}_\lambda$ lifting $\K[G]_\lambda\hookrightarrow
\K[X]_\lambda$. The embedding $\K[G]_\lambda\hookrightarrow
\widetilde{\Walg}_\lambda$ is extended to a $G$-equivariant map
$\psi:\K[G]_\lambda\otimes \Walg\rightarrow \widetilde{\Walg}_\lambda$. The
map $\gr\psi$ is an isomorphism. Since the filtrations on both
$\K[G]_\lambda\otimes \Walg,\widetilde{\Walg}$ are bounded from
below, we see that $\psi$ is an isomorphism.
\end{proof}

The following conjecture implies the existence of a 1-dimensional
$G$-module without restrictions on $\g$.

\begin{Conj}\label{Conj:1}
Let $\g$ be exceptional. Then there is $\J\in \Pr(\U)$ with
$\VA(\J)=\overline{G\chi}$ and $ \mult(\J)=1$.
\end{Conj}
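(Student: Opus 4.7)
The conjecture is equivalent, via assertions (v)--(ix) of Theorem~\ref{Thm:0.2.2}, to the statement that for every nilpotent $e$ in an exceptional $\g$ the finite $W$-algebra $\Walg$ admits a one-dimensional representation. Indeed, if $\I_0\subset\Walg$ has codimension one, then $\J:=\I_0^\dagger\in\Pr(\U)$ satisfies $\VA(\J)=\overline{G\chi}$ by (v)--(vi); since $\I_0$ is maximal, Proposition~\ref{Prop:3.37} forces $\J_\dagger=\I_0$, and Proposition~\ref{Prop:3.24} then gives $\mult(\J)=\codim_\Walg\J_\dagger=1$. Conversely, $\mult(\J)=1$ together with Proposition~\ref{Prop:3.24} forces $\J_\dagger$ to have codimension one. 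So the plan reduces the conjecture to producing a one-dimensional $\Walg$-module in each exceptional case.

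For orbits obtained by Lusztig--Spaltenstein induction from a proper Levi $\mathfrak{l}\subset\g$, and in particular for Richardson orbits, I would attempt to pass a one-dimensional character of $U(\mathfrak{l},e')$ attached to a smaller inducing nilpotent $e'$ to one of $U(\g,e)=\Walg$ through a parabolic induction procedure at the level of $W$-algebras, analogous to classical parabolic induction of ideals. After this reduction one is left with the rigid orbits: a small but nontrivial finite list in each exceptional type ($\widetilde{A}_1$ in $G_2$; a handful in each of $F_4, E_6, E_7, E_8$).

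For each remaining rigid orbit, the plan is to construct a codimension-one ideal in $\Walg$ directly. One avenue is to exploit the $G_0$-equivariance of $\widetilde{\Walg}$ established in Subsection~\ref{SUBSECTION_isomorphism}, reducing the problem to constructing a character of $\widetilde{\Walg}$ compatible with the $G_0$-action. A complementary avenue is to analyse the embedding $\Centr(\g)\hookrightarrow\Walg$ and to identify a central character through which $\Walg$ admits a one-dimensional quotient, with candidate characters supplied by the orbit-method heuristics for unipotent representations attached to $G\chi$.

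The main obstacle will be those rigid orbits in $E_7$ and $E_8$ for which neither induction nor the available tables of unipotent ideals furnish a visible candidate; for these, one must resort to explicit case-by-case computation. A useful simplification is the $C(e)=Z_G(e)/Z_G(e)^\circ$ action on the set of finite-codimension prime ideals of $\Walg$ mentioned in the footnote to Theorem~\ref{Thm:0.2.2}(viii): one needs only produce a single $C(e)$-orbit of such ideals, and the component groups of exceptional type are small enough that this substantially reduces the required verification.
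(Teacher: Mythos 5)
This statement is labelled a \emph{Conjecture} in the paper, and the paper offers no proof of it; it is stated precisely because the argument of Subsection \ref{SUBSECTION_mult_1} (the Kraft--Procesi construction of an ideal with $\gr\U/\J=\K[\overline{G\chi}]$) is only available for classical $\g$. Your proposal is likewise not a proof but a research programme: the parabolic-induction step for non-rigid orbits is not carried out (no construction of the induction functor for $W$-algebras is given, nor a verification that it preserves one-dimensionality of representations and produces the right associated variety and multiplicity), and for the rigid orbits you explicitly concede that one ``must resort to explicit case-by-case computation'' without performing any of it. So every substantive step remains open.

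There is also a concrete gap in your claimed equivalence at the outset. Given a codimension-one ideal $\I_0\subset\Walg$, assertions (iv)--(vi) do give $\J:=\I_0^\dagger\in\Pr(\U)$ with $\VA(\J)=\overline{G\chi}$, but Proposition \ref{Prop:3.37} only yields $\J_\dagger\subset\I_0$; maximality of $\I_0$ does not force equality, since $\J_\dagger$ could be a proper subideal of $\I_0$ (maximality rules out ideals strictly \emph{between} $\I_0$ and $\Walg$, not below it). By Proposition \ref{Prop:3.24} one has $\mult(\J)=\codim_\Walg\J_\dagger$, and by (viii)--(ix) this equals at least $\sum_j\codim_\Walg\I_j$ over the whole fiber of $\J$; so $\mult(\J)=1$ requires that the fiber of $\J$ be exactly $\{\I_0\}$, which does not follow from what you cite (one would need, e.g., the $C(e)$-action result of the footnote together with $C(e)$-stability of $\I_0$). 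Thus even the reduction of the conjecture to the existence of a one-dimensional $\Walg$-module, in the direction you need it, is incomplete. Note that the paper only uses the opposite, unproblematic direction: $\mult(\J)=1$ implies $\codim_\Walg\J_\dagger=1$.
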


\subsection{Existence of an ideal of multiplicity 1}\label{SUBSECTION_mult_1}
Below $\g=\sl_n,\so_n$ or $\sp_{2n}$. Set $G=\operatorname{SL}_n, \operatorname{O}_n,\operatorname{Sp}_{2n}$
for $\g=\sl_n,\so_n,\sp_{2n}$, respectively (we remark that for $\g=\so_n$ we need a disconnected group).

To complete the proof of Theorem \ref{Thm:0.2.3} it remains  check that there is a primitive ideal $\J\subset \U$ such that
$\overline{G^\circ\chi}$ is a component in $\VA(\J)$ and $\mult_{\overline{G^\circ\chi}}(\J)=1$. We will show more, namely that
there is $\J$ with $\gr \U/\J=\K[\overline{G\chi}]$ (here we consider the filtration on $\U/\J$
coming from the PBW filtration).

To do this we will need to recall a construction of Kraft and Procesi, \cite{KP1},\cite{KP2}.

Let $\widetilde{V}$ be a symplectic vector space, $\widetilde{G}$  a reductive group
acting on $\widetilde{V}$ by linear symplectomorphisms, and $\W$ be the Weyl algebra of
$\widetilde{V}^*$ equipped with the standard filtration $\F_i\W$. Let $\omega$ denote the
symplectic form on $\widetilde{V}$. There is a moment map $\widetilde{\mu}:\widetilde{V}\rightarrow\widetilde{\g}^*$
given by $\langle\widetilde{\mu}(v),\xi\rangle=\frac{1}{2}\omega(\xi v,v),\xi\in\widetilde{\g},v\in \widetilde{V}$.
The corresponding comoment map $\widetilde{\g}\rightarrow \K[\widetilde{V}], \xi\mapsto H_{\xi},$ is the composition of the homomorphism
$\widetilde{\g}\rightarrow \sp(\widetilde{V})$ corresponding to the action $\widetilde{G}:\widetilde{V}$ and a
natural identification $\sp(\widetilde{V})\cong S^2(\widetilde{V}^*)\subset \K[\widetilde{V}]$. There is a unique
$\Sp(\widetilde{V})$-equivariant embedding $S^2(\widetilde{V}^*)\hookrightarrow \W$ lifting the natural
embedding $S^2(\widetilde{V}^*)\subset \K[\widetilde{V}]$.
The corresponding map $\widetilde{\g}\rightarrow \W, \xi\mapsto \widehat{H}_\xi,$ is a quantum comoment
map, i.e., $[\widehat{H}_\xi,f]=\xi.f$ for any $f\in \W$, where in the r.h.s.
$\xi$ denotes the derivation induced by the $G$-action.

In our situation, $\widetilde{G}:=G\times \widetilde{G}_0$, where $\widetilde{G}_0$ is a certain
reductive group. Kraft and Procesi essentially constructed (in \cite{KP1}, Section 3, for $\g=\sl_n$ and in \cite{KP2}, Section 5,
for $\g=\so_n,\sp_{2n}$) a symplectic vector space $\widetilde{V}$ equipped with a $\widetilde{G}$-action
(both depend on $\chi$) having
the following properties (here $\mu,\widetilde{\mu}_0$ are the moment maps for the $G$- and $\widetilde{G}_0$-action,
so $\widetilde{\mu}=(\mu,\widetilde{\mu}_0)$):
\begin{itemize}
\item[(A)] As a scheme, $\widetilde{\mu}_0^{-1}(0)$ is  reduced  of dimension $\dim \widetilde{V}-\dim\widetilde{\g}_0$.
\item[(B)] The image of the restriction of $\mu$ to $\widetilde{\mu}_0^{-1}(0)$ coincides with
$\overline{G\chi}$. The induced morphism $\widetilde{\mu}_0^{-1}(0)\rightarrow \overline{G\chi}$
is the quotient morphism for the $\widetilde{G}_0$-action.
\end{itemize}

The quantum comoment map $\widetilde{\g}\rightarrow \W$ gives rise to an
algebra homomorphism $$\U\rightarrow (\W/\W\widetilde{\g}_0)^{\widetilde{\g}_0}.$$
Here we embed $\widetilde{\g}_0$ into $\W$ by means of the quantum comoment map.
We are going to check that this homomorphism is surjective and its kernel $\J$ has required
properties. To do this we need the following lemma, which seems to be pretty standard.

\begin{Lem}
Let $\I$ denote the left ideal in $\W$ generated by $\widehat{H}_\xi,\xi\in\widetilde{\g}_0$.
Then $\gr\I$  is generated by $H_\xi,\xi\in\widetilde{\g}_0$.
\end{Lem}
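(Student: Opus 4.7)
The inclusion $\gr\I \supseteq J:=(H_\xi \colon \xi\in\widetilde{\g}_0)\K[\widetilde{V}]$ is immediate, since each $\widehat{H}_\xi\in \F_2\W\cap\I$ has principal symbol $H_\xi$. For the non-trivial inclusion $\gr\I \subseteq J$, my first step is to observe that property (A) says the scheme $\Spec \K[\widetilde{V}]/(H_{\xi_1},\ldots,H_{\xi_d})$ is reduced of dimension $\dim\widetilde{V}-d$, where $\xi_1,\ldots,\xi_d$ is a basis of $\widetilde{\g}_0$. Since $\K[\widetilde{V}]$ is Cohen--Macaulay and the vanishing locus has pure codimension equal to the number of generators, the $H_{\xi_i}$ form a regular sequence. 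Hence, by the standard Koszul theorem, the module of syzygies is generated by the Koszul syzygies $H_{\xi_j}e_i-H_{\xi_i}e_j$: concretely, any homogeneous relation $\sum_i a_iH_{\xi_i}=0$ admits a presentation $a_i=\sum_j c_{ij}H_{\xi_j}$ with $c_{ij}=-c_{ji}$ in $\K[\widetilde{V}]$.

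Given $f\in \F_k\W\cap\I$, I would argue by downward induction on the quantity $n:=\max_i(\operatorname{ord}(u_i)+2)$ over all presentations $f=\sum_i u_i\widehat{H}_{\xi_i}$. The base case $n\le k$ gives $\sigma_k(f)=\sum_i\sigma_{n-2}(u_i)H_{\xi_i}\in J$ directly (with the convention that a summand is zero if it has strictly lower order than $k$). For the inductive step, assume $n>k$; the vanishing $\sigma_n(f)=0$ yields $\sum_i\sigma_{n-2}(u_i)H_{\xi_i}=0$ in $\K[\widetilde{V}]$, so there exist antisymmetric $c_{ij}\in\K[\widetilde{V}]_{n-4}$ with $\sigma_{n-2}(u_i)=\sum_j c_{ij}H_{\xi_j}$. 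Lift these to antisymmetric $\hat c_{ij}\in\F_{n-4}\W$, and set $u'_i:=u_i-\sum_j\hat c_{ij}\widehat{H}_{\xi_j}$, which lies in $\F_{n-3}\W$ by construction. Using the antisymmetry of $\hat c_{ij}$ together with the quantum-comoment identity $[\widehat{H}_{\xi_j},\widehat{H}_{\xi_i}]=\widehat{H}_{[\xi_j,\xi_i]}$, one computes
\[
f \;=\; \sum_i u'_i\widehat{H}_{\xi_i}\;+\;\tfrac12\sum_{i,j}\hat c_{ij}\widehat{H}_{[\xi_j,\xi_i]},
\]
which is a new presentation of $f$ of order at most $\max(n-1,n-2)=n-1$, completing the inductive step.

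The main obstacle is this lifting step: the commutative Koszul syzygy does not lift to an exact relation in $\W$, and the attempt to cancel the top-order part of $u_i$ produces an error term involving $\widehat{H}_{[\xi_j,\xi_i]}$. What makes the induction close is the crucial drop in order under commutators in the Weyl algebra, namely $[\F_2\W,\F_2\W]\subseteq\F_2\W$ while $\F_2\W\cdot\F_2\W\subseteq\F_4\W$: this forces the error to live in $\F_{n-2}\W\subseteq\F_{n-1}\W$ rather than at the top level, so the quantity $n$ strictly decreases at each stage. The only nontrivial input beyond this computation is the fact that the $H_{\xi_i}$ form a regular sequence, which is where property (A) enters in an essential way.
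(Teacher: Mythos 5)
Your proof is correct and follows essentially the same route as the paper: property (A) plus Cohen--Macaulayness gives that the $H_{\xi_i}$ form a regular sequence, the Koszul complex then generates all syzygies, and one corrects a presentation of $f$ order by order, using that the commutator in $\W$ drops filtration degree by $2$. Your explicit tracking of the error term $\tfrac12\sum_{i,j}\hat c_{ij}\widehat{H}_{[\xi_j,\xi_i]}$ (which lies back in the left ideal with coefficients of strictly lower order) is in fact slightly more careful than the paper's formulation, which passes over this commutator correction silently.
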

\begin{proof}
Choose a basis $\xi_1,\ldots,\xi_n$ in $\widetilde{\g}_0$. Consider the free module
$\K[\widetilde{V}]^{\oplus n}$, let $e_1,\ldots,e_n$ denote its tautological
basis. Consider the $\K[\widetilde{V}]$-module homomorphism $\theta:\K[\widetilde{V}]^{\oplus n}\rightarrow
\K[\widetilde{V}]$ given by $e_i\mapsto H_{\xi_i}, i=1,\ldots,n$.
It follows from (A) that  $H_{\xi_i}$ form a regular sequence in $\K[\widetilde{V}]$.
In particular, $\ker\theta$ is generated by the elements $\Theta_{ij}:=H_{\xi_i}e_j-H_{\xi_j}e_i$.

We need to check that if $\widehat{f}_1,\ldots,\widehat{f}_n\in \F_l\W$ are such that $\sum_{i=1}^n \widehat{f}_i \widehat{H}_{\xi_i}\in \F_{k}\W$
for $k<l+2$, then there are $\widehat{f}_i'\in \F_{k-2}\W$ with
$\sum_{i=1}^n \widehat{f}_i \widehat{H}_{\xi_i}=\sum_{i=1}^n \widehat{f}_i' \widehat{H}_{\xi_i}$.
Let $f_i$ denote the image of $\widehat{f}_i$ in $S^{l} \widetilde{V}^*$. Then $\sum_{i=1}^n f_i H_{\xi_i}=0$.
It follows that $\sum_{i=1}^n f_i e_i\in \ker \theta$ so there are $g_{ij}\in S^{l-2}\widetilde{V}^*$
such that $\sum_{i=1}^n f_ie_i=\sum_{ij}g_{ij}\Theta_{ij}$. Let $\widehat{\theta}$ denote the homomorphism
$\W^{\oplus n}\rightarrow \W$ of left $\W$-modules given by $e_i\mapsto \widehat{H}_{\xi_i}$ and set $\widehat{\Theta}_{ij}:=\widehat{H}_{\xi_i}e_j-
\widehat{H}_{\xi_j}e_i$. Lift $g_{ij}$ to some elements $\widehat{g}_{ij}\in \F_{l-2}\W$.
Then the vector $\sum_{i=1}^n \widehat{f}_ie_i-\sum_{i,j} \widehat{g}_{ij}\widehat{\Theta}_{ij}$
maps to $\sum_{i=1}^n \widehat{f}_i\widehat{H}_{\xi_i}$ but all components of this vector
lie in $\F_{l-1}\W$. Repeating this procedure, we get our claim.
\end{proof}

It follows from the lemma that $\gr \W/\I=\K[\widetilde{\mu}_0^{-1}(0)]$. Since the group
$\widetilde{G}_0$ is reductive, we see that $\gr \left((\W/\I)^{\widetilde{\g}_0}\right)=\K[\widetilde{\mu}_0^{-1}(0)]^{\widetilde{\g}_0}$.
From (B) it follows that the natural homomorphism $\U\rightarrow (\W/\I)^{\widetilde{\g}_0}$
is surjective and  its kernel $\J$ has the required properties.

\bigskip

{\Small Department of Mathematics, Massachusetts Institute of
Technology, 77 Massachusetts Avenue, Cambridge, MA 02139, USA.

\noindent E-mail address: ivanlosev@math.mit.edu}

\end{document}